\urldef{\emailvosshall}{\url}{vosshall@mathematik.uni-kl.de}
\makeatletter\@addtoreset{equation}{section}\makeatother
\theoremstyle{plain}      \newtheorem{theorem}{Theorem}[section]
                          \newtheorem{corollary}[theorem]{Corollary}
                          \newtheorem{proposition}[theorem]{Proposition}
													\newtheorem{condition}[theorem]{Condition}
\theoremstyle{remark}     \newtheorem{remark}[theorem]{Remark}
                          \newtheorem{lemma}[theorem]{Lemma}
													\newtheorem{example}[theorem]{Example}
\theoremstyle{definition} \newtheorem{definition}[theorem]{Definition}
\begin{document} 

\newcommand{\grad}{\nabla}
\newcommand{\D}{\partial}
\newcommand{\E}{\mathcal{E}}
\newcommand{\N}{\mathbb{N}}
\newcommand{\R}{\mathbb{R}_{\scriptscriptstyle{\ge 0}}}
\newcommand{\dom}{\mathcal{D}}
\newcommand{\ess}{\operatorname{ess~inf}}
\newcommand{\cem}{\operatorname{\text{\ding{61}}}}
\newcommand{\supp}{\operatorname{\text{supp}}}
\newcommand{\ca}{\operatorname{\text{cap}}}

\setenumerate[1]{label=(\roman*)}       
\setenumerate[2]{label=(\alph*)}

\begin{titlepage}
\title{\Large Interacting particle systems with sticky boundary}
\author{
\normalsize \sc Robert Vo\ss hall \footnote{University of Kaiserslautern, P.O.Box 3049, 67653
Kaiserslautern, Germany.}~\thanks{\emailvosshall}}
\date{\small \today}
\end{titlepage}
\maketitle

\pagestyle{headings}

\begin{abstract}
\noindent In this paper, we construct under general assumptions the stochastic dynamics of an interacting particle system in a bounded domain $\Omega$ with sticky boundary. Under appropriate conditions on the interaction the constructed process solves the underlying SDE for every starting point in the state space. Moreover, we also obtain a solution for q.e. starting point in the case of singular interactions which generalizes former results. Finally, the setting is applied to the case of particles diffusing in a chromatography tube.\\

\thanks{\textbf{Mathematics Subject Classification 2010}. 
\textit{60J50, 60J60, 58J65, 31C25, 60K35}
}
\\
\thanks{\textbf{Keywords}:
\textit{sticky reflected diffusions, interacting particle systems, Wentzell boundary conditions}
}
\end{abstract}

\section{Introduction}

In \cite{FGV14} a sticky reflected distorted Brownian motion on $[0,\infty)^n$, $n \in \mathbb{N}$, is constructed via Dirichlet forms and applied to stochastic interface models. Afterwards, the connection to random time changes and Girsanov transformations is investigated in \cite{GV14a}. In particular, strong Feller properties of the transition semigroup of a process associated to the underlying Dirichlet form are proven such that the existence result for weak solution of the underlying SDE of \cite{GV14a} is improved under appropiate assumptions on the drift. Moreover, the Dirichlet form construction of sticky reflected distorted Brownian motion on the half-line $[0,\infty)$ is generalized to general bounded domains $\Omega \subset \mathbb{R}^d$, $d \in \mathbb{N}$, in \cite{GV14b} such that the setting even allows a diffusion on $\partial \Omega$. In the present paper, we construct and analyze diffusions on $\overline{\Omega}^N$, $N \in \mathbb{N}$, with sojourn on the boundary. This type of diffusion desribes naturally a system of interacting particles with sticky boundary. In the independent case, i.e., the case without interaction, the setting reduces to $N$ independent diffusions, where each diffusion is of the type considered in \cite{GV14a}. We define the corresponding Dirichlet form and present the connections to random time changes and Girsanov transformations. Moreover, we calculate the corresponding $L^2$-generator for smooth functions and establish in this way the connection to the underlying martingale problem and SDE.\\
The construction allows very weak assumptions on the interaction and moreover, illustrates some effects which do not appear in the case of interacting particle systems with absorbing or reflecting boundary conditions. For example, in the case $N=1$ without drift the invariant measure for the sticky reflected Brownian motion on $\overline{\Omega}$ is given by $\lambda + \sigma$, where $\lambda$ denotes the Lebesgue measure on $\Omega$ and $\sigma$ the surface measure on $\partial \Omega$. Then, the invariant measure for general $N \in \mathbb{N}$ with additional drift is given by
\[ \mu=\varrho \prod_{i=1}^N (\lambda_i + \sigma_i), \]
where $\varrho$ is a suitable density. In the case of absorbing or reflecting boundary conditions the invariant measure can be derived similarly, but the surface measure does not appear. Hence, the structure of the product measure is much simpler. In this case, one obtains the Lebesgue measure on $\tilde{\Omega}:=\Omega^N$ and previous results apply. The only problem is that usually the boundary regularity decreases, since $\tilde{\Omega}$ possesses corners. In the case of the sticky boundary condition it is not possible anymore to reduce the setting for general $N \in \mathbb{N}$ to the case $N=1$. Therefore, it is necessary to analyze the structure of the problem in detail. \\

The investigated system of SDEs is of the form 
\begin{align*}
d\mathbf{X}^i_t =& \mathbbm{1}_{\Omega} (\mathbf{X}^i_t) \Big( dB^i_t + \frac{1}{2} \big( \frac{\nabla_i  \alpha_i}{\alpha_i} (\mathbf{X}^i_t) + \frac{\nabla_i \phi}{\phi} (\mathbf{X}_t) \big) dt \Big) - \mathbbm{1}_{\Gamma}(\mathbf{X}^i_t) \frac{\alpha_i}{\beta_i}(\mathbf{X}^i_t) ~n(\mathbf{X}^i_t) dt \\
 + & \delta~\mathbbm{1}_{\Gamma}(\mathbf{X}^i_t) \Big( dB_t^{\Gamma,i} + \big( \frac{\nabla_{\Gamma,i} \beta_i}{\beta_i} (\mathbf{X}_t^i)+ \frac{\nabla_{\Gamma,i} \phi}{\phi}(\mathbf{X}_t) \big) dt \Big), \quad i=1,\dots,N  \\
dB_t^{\Gamma,i}&=P(\mathbf{X}_t^i) \circ dB_t^i  \\ 
\mathbf{X}_0 =& x \in \overline{\Omega}^N,
\end{align*}
where $\delta \in \{0,1\}$, $(B_t)_{t \geq 0}$, $B_t=(B_t^1,\dots,B_t^N)$, is an $Nd$-dimensional standard Brownian motion, $n$ is the outward normal vector and $P$ is the projection on the tangent space. The particle interaction is given by $\nabla_i \ln \phi$ and $\nabla_{\Gamma,i} \ln \phi$, $i=1,\dots,N$, where $\nabla_{\Gamma}$ denotes the surface gradient. The precise definitions of $\nabla_i$ and $\nabla_{\Gamma,i}$ as well as $n$ and $P$ are given in Section \ref{generalnot}. The densities $\alpha_i$ and $\beta_i$, $i=1,\dots,N$, are only assumed to be continuous and to fulfill a weak differentiability condition whereas $\phi$ is $C^1$ (see also Condition \ref{conddiff} and Theorem \ref{thmsolSDE}). Note that the drift is nevertheless not necessarily Lipschitz continuous, since the densities are allowed to vanish on a set of measure zero. A similar system of SDEs has been investigated in \cite{Gra88} and applied to a model for molecules diffusing in a chromatography tube. We also consider such kind of applications and extend previous results to the case of singular interactions.\\

Our paper is organized as follows: In Section \ref{sectprel} basic notations are explained and some previous results are stated. In Section \ref{sectprocess} the underlying Dirichlet form is constructed and afterwards, in Section \ref{sectanapro} the associated diffusion is analyzed and the relations to random time changes and Girsanov transformations are presented. Finally, we apply the results in Section \ref{sectappl}.

\section{Preliminaries} \label{sectprel}

\subsection{General notation} \label{generalnot}

Throughout this paper, $\Omega \subset \mathbb{R}^d$, $d \geq 1$, denotes a nonempty bounded domain such that its boundary $\Gamma:=\partial \Omega$ is of Lebesgue measure zero. In the case $\delta=1$ we assume that $d \geq 2$. The standard scalar product in $\mathbb{R}^n$, $n \in \mathbb{N}$, is given by $(\cdot,\cdot)$ and norms in $\mathbb{R}^n$ by $|\cdot|$ (in particular, for the modulus in $\mathbb{R}$; eventually labeled by a lower index in order to distinguish norms). Similarly, $\Vert \cdot \Vert$ denotes norms in function spaces. The metric on $\mathbb{R}^d$ induced by the euclidean metric is denoted by $d_{\text{euc}}$. \\

For a vector $x \in \overline{\Omega}^N$, $N \in \mathbb{N}$, we use the representation $x=(x^1,\dots,x^N)$, where $x^i \in \overline{\Omega}$, $i=1,\dots,N$, is represented in the form $x^i=(x^i_1,\dots,x^i_d)$. We denote by $\nabla$ the gradient of a smooth function and by $\partial_{x^i_k}$, $i=1,\dots,N$, $k=1,\dots,d$, its partial derivatives. In the case $N=1$ we simply write $\partial_k$ for $k=1,\dots,d$. By $\nabla_i$, $i=1,\dots,N$, we denote the $d$-dimensional vector given by the partial derivatives with respect to the coordinates $x^i_k$, $k=1,\dots,d$. Moreover, $\nabla^2$ denotes the Hessian for functions mapping from subsets of $\mathbb{R}^d$ to $\mathbb{R}$ and $\Delta= \text{Tr}( \nabla^2 )$ the Laplacian. $\nabla^2_i$ and $\Delta_i$ are defined analogously. In the case of Sobolev functions we use the same notations in the weak sense.

\subsection{Submanifolds in the euclidean space}

In the following, the boundary $\Gamma$ of $\Omega$ is said to be Lipschitz continuous (respecktively $C^k$-smooth) if Definition 2.1 of \cite{GV14b} holds, i.e., $\Gamma$ is Lipschitz continuous (respectively $C^k$-smooth) if $\Omega$ is locally below the graph of a Lipschitz continuous (respectively $C^k$-) function and the graph coincides with $\Gamma$. In this case, we also simply say that $\Gamma$ is Lipschitz (respectively $C^k$) or that $\Omega$ has Lipschitz boundary (respectively $C^k$-boundary). Moreover, the surface measure on $\Gamma$ is denoted by $\sigma$ and the (outward) normal vector at a point $x \in \Gamma$ is denoted by $n(x)$ (supposed the boundary is smooth at $x$).

\begin{remark}
The definition of $n$ can be extended to a neighborhood of $x$ and $n$ is differentiable near $x$ if $\Gamma$ is $C^2$. 
\end{remark}

\begin{definition} \label{defproj}
Let $x \in \Gamma$ be such that the outward normal $n(x)$ exists. Define
\[ P(x):= E- n(x)n(x)^t \in \mathbb{R}^{d \times d}, \]
where $E$ is the $d \times d$ identity matrix. We call $P(x)$ the \textbf{orthogonal projection on the tangent space} at $x$. Note that $P(x)z=z- (n(x),z)~ n(x)$ for $z \in \mathbb{R}^d$.
\end{definition}

\begin{definition}
Let $f \in C^1(\overline{\Omega})$ and $x \in \Gamma$. Then we define (whenever $\Gamma$ is sufficiently smooth at $x$) the \textbf{gradient} of $f$ at $x$ along $\Gamma$ by 
\[ \nabla_{\Gamma} f (x):= P(x) \nabla f(x) \]
and if $f \in C^2(\overline{\Omega})$ the \textbf{Laplace-Beltrami} of $f$ at $x$ by
\[ \Delta_{\Gamma} f(x) := \text{Tr} ( \nabla_{\Gamma}^2 f(x))= \text{div}_{\Gamma} \nabla_{\Gamma} f(x)= \text{Tr}(P(x) \nabla (P(x) \nabla f(x))), \]
where $\text{div}_{\Gamma} \Phi := \text{Tr}(P \nabla \Phi)$ for $\Phi=(\Phi_1,\dots,\Phi_d) \in C^1(\overline{\Omega};\mathbb{R}^d)$ with $\nabla \Phi=J \Phi=(\nabla \Phi_1|\dots| \nabla \Phi_d)$.
Analogously, we define higher derivatives of order $k \in \mathbb{N}$. In this way, let $C^k(\Gamma_0)$ be the space of continuously differentiable functions on $\Gamma_0$ obtained by restriction of $C^k(\overline{\Omega})$-functions, where $\Gamma_0$ is an open subset of $\Gamma$ in the subspace topology. As usual, set $C^{\infty}(\Gamma_0):=\cap_{k \in \mathbb{N}}~ C^k(\Gamma_0)$. Moreover, in the case that $n$ is differentiable at $x$ we define the \textbf{mean curvature} of $\Gamma$ at $x$ by 
\[ \kappa(x):= \text{div}_{\Gamma}~ n (x). \]
\end{definition}

\begin{remark}
For smooth functions, we have the divergence theorem
\begin{align} \label{divergence} \int_{\Gamma} (\Phi , \nabla_{\Gamma} g)~ d\sigma = - \int_{\Gamma} \text{div}_{\Gamma} \Phi~ g~d\sigma,
\end{align}
where $\Phi$ is $\mathbb{R}^d$-valued (see e.g. \cite[Chap. 2, Proposition 2.2]{Tay11}).
\end{remark}

The following lemma follows easily by calculation:

\begin{lemma} \label{lemcurv}
Assume that $\Gamma$ is $C^2$-smooth. Then
\[ ( P \nabla )^t P = - \kappa n. \] 
\end{lemma}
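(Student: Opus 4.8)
The statement is a pointwise identity at any $x\in\Gamma$ at which $\Gamma$ is $C^2$, so by the Remark the normal field $n$ extends to a $C^1$ map on a neighbourhood of $x$ and all derivatives below are meaningful; moreover, since the operator $\text{Tr}(P\nabla\,\cdot\,)$ involves only tangential derivatives, every quantity produced depends only on the restriction of $n$ to $\Gamma$, so the choice of extension is irrelevant. The plan is simply to compute $(P\nabla)^{t}P$ entrywise and to recognise the mean curvature in the result.

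First I would unwind the notation: reading $P\nabla$ as the column vector of first-order operators with components $(P\nabla)_{m}=\sum_{j}P_{mj}\partial_{j}$, the product $(P\nabla)^{t}P$ is the row vector whose $l$-th entry is $\sum_{m}(P\nabla)_{m}P_{ml}=\sum_{j,k}P_{kj}\,\partial_{j}P_{kl}$, which by the definition of $\text{div}_{\Gamma}$ is exactly $\text{div}_{\Gamma}$ of the $l$-th column of $P$ (rows and columns coincide since $P=P^{t}$). Next I would insert $P_{kl}=\delta_{kl}-n_{k}n_{l}$ and apply the product rule $\partial_{j}P_{kl}=-(\partial_{j}n_{k})\,n_{l}-n_{k}\,(\partial_{j}n_{l})$. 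Multiplying by $P_{kj}$ and summing over $j,k$, the contribution of the second summand is $-\sum_{j}\big(\sum_{k}P_{kj}n_{k}\big)\partial_{j}n_{l}=-\sum_{j}(Pn)_{j}\,\partial_{j}n_{l}=0$ because $Pn=0$, while the contribution of the first summand equals $-n_{l}\sum_{j,k}P_{kj}\,\partial_{j}n_{k}=-n_{l}\,\text{Tr}(P\nabla n)=-n_{l}\,\text{div}_{\Gamma}n=-\kappa\,n_{l}$. Since $l$ was arbitrary, this gives $(P\nabla)^{t}P=-\kappa\,n$.

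I expect no genuine obstacle — this is precisely the routine "calculation" the lemma refers to. The only points demanding a little care are the index bookkeeping (systematic use of the symmetry $P=P^{t}$ and of $Pn=0$) and the observation, made at the outset, that $\text{div}_{\Gamma}n$ and hence the whole expression is independent of the chosen $C^{1}$-extension of $n$ off $\Gamma$.
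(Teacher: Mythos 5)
Your proposal is correct and is precisely the routine entrywise computation that the paper invokes when it says the lemma ``follows easily by calculation'' (the paper gives no further proof). The index bookkeeping, the use of $P=P^{t}$ and $Pn=0$ on $\Gamma$, and the identification of $\operatorname{Tr}(P\nabla n)=\operatorname{div}_{\Gamma}n=\kappa$ all check out against the paper's definitions of $\nabla\Phi$, $\operatorname{div}_{\Gamma}$ and $\kappa$.
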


\begin{definition}
Let $\Gamma_0$ be an open subset of $\Gamma$ in the subspace topology. The \textbf{Sobolev space} $H^{1,k}(\Gamma_0)$, $k \geq 1$, is defined by $\overline{C^1(\Gamma_0)}^{\Vert \cdot \Vert_{H^{1,k}(\Gamma_0)}} \subset L^k(\Gamma_0;\sigma)$, i.e., the closure $C^1(\Gamma_0)$ with respect to the norm
\[ \Vert \cdot \Vert_{H^{1,k}(\Gamma_0)} := \big( \Vert \cdot \Vert_{L^k(\Gamma_0;\sigma)}^k + \Vert \nabla_{\Gamma} \cdot \Vert_{L^k(\Gamma_0;\sigma)}^k \big)^{\frac{1}{k}}. \]
\end{definition}

\begin{remark}
$H^{1,k}(\Gamma_0)$ can also be charaterized as the space of functions which are in local coordinates in the corresponding Sobolev space.\\
 If $f \in H^{1,k}(\Gamma_0)$ and $(f_n)_{n \in \mathbb{N}}$ is an approximating sequence of smooth functions, Cauchy in $H^{1,k}(\Gamma_0)$, we call the $L^k(\Gamma_0;\sigma)$-limit of $(\nabla_{\Gamma} f_n)_{n \in \mathbb{N}}$ the weak gradient of $f$ and denote it by $\nabla_{\Gamma} f$. In the case $\Gamma_0=\Gamma$, (\ref{divergence}) transfers from $f_n$ to $f$ using a continuity argument provided that $\Phi \in L^{k^{\prime}}(\Gamma;\sigma)$ for $\frac{1}{k}+\frac{1}{k^{\prime}}=1$.
\end{remark}

\subsection{Brownian motion on manifolds}

We shortly recall some facts about Brownian motion on $\Gamma$. For details about stochastic analysis on manifolds, we refer to \cite{HT94}, \cite{Hsu02} and \cite{IW89}.\\

By definition, Brownian motion $(B_t^{\Gamma})_{t \geq 0}$ on $\Gamma$ is a $\Gamma$-valued stochastic process that is generated by $\frac{1}{2} \Delta_{\Gamma}$, in analogy to Brownian motion on $\mathbb{R}^d$, in the sense that $(B_t^{\Gamma})_{t \geq 0}$ solves the martingale problem for $(\frac{1}{2} \Delta_{\Gamma},C^{\infty}(\Gamma))$. We recall the following:

\begin{lemma} 
Let $\Gamma$ be $C^2$-smooth. Then a solution of the Stratonovich SDE
\[ d\mathbf{X}_t= P(\mathbf{X}_t) \circ dB_t, \ \ \mathbf{X}_0 \in \Gamma, \]
is a Brownian motion on $\Gamma$, where $(B_t)_{t \geq 0}$ is a Brownian motion in $\mathbb{R}^d$.
\end{lemma}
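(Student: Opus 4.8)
The plan is to verify directly that any solution $(\mathbf{X}_t)_{t\ge 0}$ of the Stratonovich SDE takes values in $\Gamma$ and solves the martingale problem for $(\frac{1}{2}\Delta_{\Gamma},C^{\infty}(\Gamma))$, which is precisely the definition of a Brownian motion on $\Gamma$ given above. Since $\Gamma$ is $C^2$, the signed distance function $\rho$ to $\Gamma$ is $C^2$ on a tubular neighbourhood $U$ of $\Gamma$, with $|\nabla\rho|\equiv 1$ there; I would first extend $n:=\nabla\rho$ (a $C^1$ unit normal field) and $P:=E-nn^t$ (which is $C^1$, symmetric and satisfies $P^2=P$) to all of $U$. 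These extensions agree with Definition~\ref{defproj} on $\Gamma$ and let us read the SDE as an equation in $\mathbb{R}^d$, valid at least up to the exit time of $U$.

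The first step is to show $\mathbf{X}_t\in\Gamma$ for all $t$. Applying the Stratonovich chain rule to $\rho$ and using the symmetry of $P$,
\[
d\rho(\mathbf{X}_t)=(\nabla\rho(\mathbf{X}_t),\circ d\mathbf{X}_t)=(\nabla\rho(\mathbf{X}_t),P(\mathbf{X}_t)\circ dB_t)=(P(\mathbf{X}_t)\,n(\mathbf{X}_t),\circ dB_t)=0,
\]
because $Pn=n-(n,n)\,n=0$ on $U$. Hence $\rho(\mathbf{X}_t)=\rho(\mathbf{X}_0)=0$, i.e.\ $\mathbf{X}_t\in\Gamma\subset U$; since $\Gamma$ is compact this also rules out explosion and exit from $U$, so $\mathbf{X}$ is a continuous, $\Gamma$-valued process for all times.

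Next, for $f\in C^{\infty}(\Gamma)$ I would fix a representative $\tilde f\in C^{\infty}(\overline{\Omega})$. The Stratonovich chain rule together with the symmetry of $P$ gives $df(\mathbf{X}_t)=(\nabla\tilde f(\mathbf{X}_t),P(\mathbf{X}_t)\circ dB_t)=(\nabla_{\Gamma}\tilde f(\mathbf{X}_t),\circ dB_t)$ with $\nabla_{\Gamma}\tilde f=P\nabla\tilde f$, and then convert this Stratonovich integral into Itô form. Writing $g:=P\nabla\tilde f\in C^1(U;\mathbb{R}^d)$ and using that the Itô martingale part of $d\mathbf{X}^j_t$ is $\sum_k P_{jk}(\mathbf{X}_t)\,dB^k_t$, the Itô correction is
\[
\frac{1}{2}\sum_i d[g_i(\mathbf{X}),B^i]_t=\frac{1}{2}\sum_{i,j}\partial_j g_i(\mathbf{X}_t)\,P_{ji}(\mathbf{X}_t)\,dt=\frac{1}{2}\,\text{Tr}\big(P(\mathbf{X}_t)\nabla g(\mathbf{X}_t)\big)\,dt,
\]
again using $P=P^t$. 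Since $\mathbf{X}_t\in\Gamma$, this equals $\frac{1}{2}\,\text{div}_{\Gamma}\nabla_{\Gamma}f(\mathbf{X}_t)\,dt=\frac{1}{2}\Delta_{\Gamma}f(\mathbf{X}_t)\,dt$ by the definition of $\Delta_{\Gamma}$ (only boundary values of $\tilde f$ enter, so the outcome does not depend on the chosen extension). Hence
\[
f(\mathbf{X}_t)=f(\mathbf{X}_0)+\int_0^t(\nabla_{\Gamma}\tilde f(\mathbf{X}_s),dB_s)+\frac{1}{2}\int_0^t\Delta_{\Gamma}f(\mathbf{X}_s)\,ds,
\]
and since $\nabla_{\Gamma}\tilde f$ is bounded on the compact set $\Gamma$ the stochastic integral is a true martingale. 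Thus $\mathbf{X}$ solves the martingale problem for $(\frac{1}{2}\Delta_{\Gamma},C^{\infty}(\Gamma))$ and is a Brownian motion on $\Gamma$.

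The routine parts are the two applications of the Stratonovich chain rule and the Itô correction bookkeeping. The step that genuinely uses $\Gamma\in C^2$ is the existence of a $C^1$ extension of $P$ (equivalently, $C^2$-regularity of the signed distance), needed both to make sense of the SDE near $\Gamma$ and to legitimise the Stratonovich-to-Itô conversion of $\int(g(\mathbf{X}),\circ dB)$. The only mildly delicate point is checking that the drift produced by that conversion is exactly $\frac{1}{2}\Delta_{\Gamma}f$ in the sense of the paper; if one prefers to expand in an adapted orthonormal frame rather than work with the symmetric projection directly, Lemma~\ref{lemcurv} makes the cancellation of the normal-direction terms transparent.
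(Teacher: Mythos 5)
Your proof is correct. The paper itself gives no argument here: its ``proof'' is a one-line citation to Hsu (Chap.~3, Sect.~2), which is where exactly this computation is carried out. What you have written is a correct, self-contained version of that standard argument, adapted to the paper's extrinsic conventions: extend the unit normal via the signed distance (this is where $\Gamma\in C^2$ is genuinely used, since it gives $\rho\in C^2$ and hence $P\in C^1$, matching the regularity the paper's It\^o--Stratonovich lemma requires of $S$), use $Pn=0$ to conclude $\rho(\mathbf{X}_t)\equiv 0$ and hence $\Gamma$-invariance and non-exit from the tubular neighbourhood, and then convert $\int (P\nabla\tilde f(\mathbf{X}),\circ dB)$ to It\^o form to identify the drift as $\tfrac12\operatorname{Tr}\bigl(P\nabla(P\nabla\tilde f)\bigr)=\tfrac12\Delta_{\Gamma}f$, which is precisely the paper's definition of the Laplace--Beltrami operator. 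The two points that need (and receive) care are the independence of the result from the choice of extension $\tilde f$ --- which holds because $\operatorname{Tr}(P\nabla\Phi)$ only differentiates $\Phi$ tangentially and $P\nabla\tilde f|_{\Gamma}$ depends only on $f|_{\Gamma}$ --- and the fact that the stochastic integral is a true martingale, which follows from boundedness of $\nabla_{\Gamma}\tilde f$ on the compact set $\Gamma$. What your version buys over the citation is that it makes visible where $C^2$-regularity enters and that it is phrased entirely in the paper's own notation ($P$, $\operatorname{div}_{\Gamma}$, $\Delta_{\Gamma}$), including the alternative route via Lemma~\ref{lemcurv} that the paper itself sketches after the It\^o--Stratonovich lemma.
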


\begin{proof}
See \cite[Chap. 3, Sect. 2]{Hsu02}.
\end{proof}

\begin{remark}
Note that the dimension of the driving Brownian motion $(B_t)_{t \geq 0}$ is strictly larger than the dimension of the submanifold $\Gamma$ and hence, according to \cite{Hsu02} the driving Brownian motion contains some extra information beyond what is usually provided by a Brownian motion on $\Gamma$. Furthermore, a solution of the above SDE is naturally $\Gamma$-valued, since $P(x)z$ is tangential to $\Gamma$ at $x$ for every $x \in \Gamma$ and $z \in \mathbb{R}^d$. In our application, it is natural to construct a Brownian motion on $\Gamma$ by means of a $d$-dimensional Brownian motion, since a Brownian motion on $\mathbb{R}^d$ is involved anyway.
\end{remark}

\noindent We also need the following result:

\begin{lemma}[It{\^o}-Stratonovich transformation rule]
Consider the Stratonovich integral in $\mathbb{R}^d$ given by 
\[ S(\mathbf{X}_t) \circ dB_t, \]
where $B=(B_t)_{t \geq 0}$ is a $d$-dimensional Brownian motion and $S:\mathbb{R}^d \mapsto \mathbb{R}^{d \times d}$ is $C^1$-smooth and symmetric. Then the It{\^o} form reads
\begin{align} \label{itoform} S(\mathbf{X}_t) dB_t + \frac{1}{2} \big((S \nabla)^t S \big)(\mathbf{X}_t) dt. \end{align}
\end{lemma}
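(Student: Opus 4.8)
The plan is to unwind the definition of the Stratonovich integral and thereby reduce the statement to the evaluation of a single covariation bracket, which is then matched with the asserted drift correction by a short index computation exploiting the symmetry of $S$. Throughout, $\mathbf{X}$ is a continuous semimartingale driven by the given Stratonovich term, i.e. $d\mathbf{X}_t=S(\mathbf{X}_t)\circ dB_t+dV_t$ with $V$ of finite variation (as in all applications of the lemma); since passing from the Stratonovich to the It{\^o} form alters only bounded-variation parts, the local-martingale part of $\mathbf{X}$ is $\int_0^\cdot S(\mathbf{X}_s)\,dB_s$, so $d\langle\mathbf{X}^k,B^j\rangle_t=\sum_{l=1}^dS_{kl}(\mathbf{X}_t)\,d\langle B^l,B^j\rangle_t=S_{kj}(\mathbf{X}_t)\,dt$.

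Recall that for continuous semimartingales $Y,Z$ one has $\int_0^t Y_s\circ dZ_s=\int_0^t Y_s\,dZ_s+\tfrac12\langle Y,Z\rangle_t$. Applying this coordinatewise, the $i$-th component of $S(\mathbf{X}_t)\circ dB_t$ is $\sum_{j=1}^dS_{ij}(\mathbf{X}_t)\,dB_t^j+\tfrac12\sum_{j=1}^d d\langle S_{ij}(\mathbf{X}_\cdot),B^j\rangle_t$, so the task is to compute $d\langle S_{ij}(\mathbf{X}_\cdot),B^j\rangle_t$. By It{\^o}'s formula (in the form valid in the present setting, cf. \cite{IW89}), $dS_{ij}(\mathbf{X}_t)=\sum_{k=1}^d\partial_kS_{ij}(\mathbf{X}_t)\,d\mathbf{X}_t^k+(\text{finite variation})$, and finite-variation terms do not contribute to brackets against $B^j$; hence $d\langle S_{ij}(\mathbf{X}_\cdot),B^j\rangle_t=\sum_{k=1}^d\partial_kS_{ij}(\mathbf{X}_t)\,d\langle\mathbf{X}^k,B^j\rangle_t=\sum_{k=1}^d\partial_kS_{ij}(\mathbf{X}_t)\,S_{kj}(\mathbf{X}_t)\,dt$. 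Summing over $j$, the It{\^o} form of $S(\mathbf{X}_t)\circ dB_t$ is $S(\mathbf{X}_t)\,dB_t+\tfrac12 A(\mathbf{X}_t)\,dt$, where $A(x)$ is the vector with $i$-th component $\sum_{j,k=1}^d\partial_kS_{ij}(x)\,S_{kj}(x)$.

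The last step is to identify $A$ with $(S\nabla)^tS$. Reading $S\nabla$ as the column vector with entries $(S\nabla)_i=\sum_kS_{ik}\partial_k$, the product $(S\nabla)^tS$ has $m$-th component $\sum_{i,k}S_{ik}\,\partial_kS_{im}$; relabelling the dummy index $i\mapsto j$ and using $S=S^t$ (so $S_{jk}=S_{kj}$ and $S_{jm}=S_{mj}$) this equals $\sum_{j,k}\partial_kS_{mj}\,S_{kj}$, which is precisely $A_m$. This yields the claimed It{\^o} form; combined with Lemma \ref{lemcurv} it also produces the It{\^o} version $P(\mathbf{X}_t)\,dB_t-\tfrac12\kappa(\mathbf{X}_t)n(\mathbf{X}_t)\,dt$ of the Brownian motion SDE of the previous lemma. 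I expect the delicate points to be, first, the mild circularity of invoking the decomposition of $\mathbf{X}$ itself, dealt with by noting that only its convention-independent martingale part $\int S(\mathbf{X})\,dB$ enters the bracket; and, second, the regularity bookkeeping: the classical It{\^o} formula asks for $C^2$, so under the stated $C^1$-hypothesis one should instead establish $\langle S_{ij}(\mathbf{X}_\cdot),B^j\rangle_t=\int_0^t\sum_k\partial_kS_{ij}(\mathbf{X}_s)\,S_{kj}(\mathbf{X}_s)\,ds$ directly from the chain rule for covariations (or quote the Stratonovich calculus for $C^1$ coefficients). The transposition in $(S\nabla)^tS$ is exactly what the symmetry of $S$ provides, and is the one place where an index slip is easy to make.
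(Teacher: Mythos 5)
The paper itself offers no proof of this lemma --- it is stated as a standard fact (with only the subsequent remark checking the case $S=P$ against Lemma \ref{lemcurv}) --- so there is no ``paper approach'' to compare against; your derivation is the natural one and it is correct. The reduction of the Stratonovich integral to $S(\mathbf{X}_t)\,dB_t+\tfrac12\,d\langle S_{i\cdot}(\mathbf{X}),B^{\cdot}\rangle_t$, the evaluation $d\langle \mathbf{X}^k,B^j\rangle_t=S_{kj}(\mathbf{X}_t)\,dt$ from the (convention-independent) martingale part of $\mathbf{X}$, and the index identification $\sum_{i,k}S_{ik}\partial_kS_{im}=\sum_{j,k}\partial_kS_{mj}S_{kj}$ via $S=S^t$ are all right; your formula also reproduces $(P\nabla)^tP=-\kappa n$ in the case $S=P$, matching the paper's remark. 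You correctly flag the two points the terse statement glosses over: that $\mathbf{X}$ must be understood as the solution of the SDE driven by the displayed Stratonovich term (otherwise the bracket $\langle S_{ij}(\mathbf{X}),B^j\rangle$ is not determined), and that under the stated $C^1$ hypothesis the covariation $\langle S_{ij}(\mathbf{X}),B^j\rangle_t=\int_0^t\sum_k\partial_kS_{ij}(\mathbf{X}_s)S_{kj}(\mathbf{X}_s)\,ds$ should be obtained from the chain rule for covariations rather than from the $C^2$ It{\^o} formula. Nothing further is needed.
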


Note that in the case $S=P$ the (\ref{itoform}) can be represented in the form
\[ P(\mathbf{X}_t) dB_t - \frac{1}{2} \kappa n(\mathbf{X}_t) dt \]
in view of Lemma \ref{lemcurv}.


\subsection{Sticky reflected diffusions on $\overline{\Omega}$} \label{sec1d}

In the following we recall the main results of \cite{GV14b}.\\

Assume that $\Gamma=\partial \Omega$ is Lipschitz continuous. Moreover, assume $\alpha \in L^1(\Omega; \lambda)$, $\alpha >0$ $\lambda$-a.e., and $\beta \in L^1(\Gamma; \sigma)$, $\beta >0$ $\sigma$-a.e..\\
Define
\begin{align} \label{densityN=1} \varrho:= \mathbbm{1}_{\Omega} ~ \alpha + \mathbbm{1}_{\Gamma} ~ \beta 
\end{align}
as well as
\[ \mu:= \varrho ~ (\lambda + \sigma) = \alpha \lambda + \beta  \sigma. \]
Note that the condition $\alpha \in L^1(\Omega; \lambda)$, $\alpha >0$ $\lambda$-a.e., and $\beta \in L^1(\Gamma; \sigma)$, $\beta >0$ $\sigma$-a.e. is equivalent to $\varrho \in L^1(\overline{\Omega};\lambda + \sigma)$, $\varrho >0$ $(\lambda + \sigma)$-a.e..

Let the symmetric and positive definite bilinear form $(\mathcal{E},\mathcal{D})$ be given by
\begin{align} \label{defform} \mathcal{E}(f,g):= \frac{1}{2} \int_{\Omega} (\nabla f, \nabla g)~ \alpha d\lambda + \frac{\delta}{2} \int_{\Gamma} (\nabla_{\Gamma} f,\nabla_{\Gamma} g) ~\beta d\sigma \ \text{ for } f,g \in \mathcal{D}:=C^1(\overline{\Omega}), \end{align}
where $(\cdot,\cdot)$ denotes the euclidean scalar product in $\mathbb{R}^d$ and $\delta \in \{0,1\}$. In addition, let 
\[ \mathcal{E}_{\Omega}(f,g):=  \frac{1}{2} \int_{\Omega} (\nabla f, \nabla g)~ \alpha d\lambda \ \text{ for } f,g \in \mathcal{D}_{\Omega}:=C^1(\overline{\Omega}) \]
as well as
\[ \mathcal{E}_{\Gamma}(f,g):= \frac{1}{2} \int_{\Gamma} (\nabla_{\Gamma} f,\nabla_{\Gamma} g) ~\beta d\sigma \ \text{ for } f,g \in \mathcal{D}_{\Gamma}:=C^1(\Gamma). \]
Note that $e(\mathcal{D})=e(\mathcal{D}_{\Omega})= \mathcal{D}_{\Gamma}$, where $e: C^1(\overline{\Omega}) \rightarrow C^1(\Gamma)$ is defined by the restriction of functions to $\Gamma$. In this terms, for $f,g \in \mathcal{D}$ we get 
\[ \mathcal{E}(f,g)= \mathcal{E}_{\Omega}(f,g) + \delta ~\mathcal{E}_{\Gamma}(f,g).\]

In order to prove closability of $(\mathcal{E},\mathcal{D})$, we need an additional assumption on the density $\varrho$. Define
\[ R_{\alpha}(\Omega) :=\{ x \in \Omega : \int_{\{ y \in \Omega : |x-y| < \epsilon \}} \alpha^{-1} d\lambda < \infty \ \text{ for some } \epsilon >0 \} \]
and analogously $R_{\beta}(\Gamma)$ with $\Omega$ replaced by $\Gamma$ and $\lambda$ replaced by $\sigma$. We assume that $\alpha =0$ $\lambda$-a.e. on $\Omega \backslash R_{\alpha}(\Omega)$ and additionally $\beta=0$ $\sigma$-a.e. on $\Gamma \backslash R_{\beta}(\Gamma)$ if $\delta=1$(Hamza condition).

Under these assumptions the following holds true:

\begin{theorem} \label{thmN=1}
The symmetric and positive definite bilinear form $(\mathcal{E},D)$ is denesly defined and closable on $L^2(\overline{\Omega};\mu)$. Its closure $(\mathcal{E},D(\mathcal{E}))$ is a recurrent, strongly local, regular, symmetric Dirichlet form on $L^2(\overline{\Omega};\mu)$.
\end{theorem}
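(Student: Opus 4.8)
The plan is to establish the six assertions — dense definedness, closability, the Markovian (Dirichlet) property, regularity, strong locality, and recurrence — in that order, using throughout the additive decomposition $\mathcal{E} = \mathcal{E}_\Omega + \delta\,\mathcal{E}_\Gamma$ recorded above (symmetry being manifest from the definition and preserved under closure). Dense definedness is routine: since $\overline{\Omega}$ is compact and $\mu = \alpha\lambda + \beta\sigma$ is a finite Radon measure, $C(\overline{\Omega})$ is dense in $L^2(\overline{\Omega};\mu)$, while $C^1(\overline{\Omega}) = \mathcal{D}$ is uniformly dense in $C(\overline{\Omega})$ (e.g. by Stone--Weierstrass or mollification); hence $\mathcal{D}$ is dense in $L^2(\overline{\Omega};\mu)$.

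The heart of the matter is closability, and here the additive structure lets me reduce to the two summands separately. Suppose $(f_n) \subset \mathcal{D}$ with $f_n \to 0$ in $L^2(\overline{\Omega};\mu)$ and $\mathcal{E}(f_n - f_m) \to 0$. Because $\mu = \alpha\lambda + \beta\sigma$ splits the $L^2$-norm, $f_n \to 0$ both in $L^2(\Omega;\alpha\lambda)$ and in $L^2(\Gamma;\beta\sigma)$; and since both summands in $\mathcal{E} = \mathcal{E}_\Omega + \delta\,\mathcal{E}_\Gamma$ are nonnegative, $(f_n)$ is $\mathcal{E}_\Omega$-Cauchy and, if $\delta = 1$, $(e(f_n)) \subset \mathcal{D}_\Gamma$ is $\mathcal{E}_\Gamma$-Cauchy (recall $e(\mathcal{D}) = \mathcal{D}_\Gamma$). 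It therefore suffices to prove that $(\mathcal{E}_\Omega,\mathcal{D}_\Omega)$ is closable on $L^2(\Omega;\alpha\lambda)$ and $(\mathcal{E}_\Gamma,\mathcal{D}_\Gamma)$ is closable on $L^2(\Gamma;\beta\sigma)$, for then $\mathcal{E}_\Omega(f_n) \to 0$ and $\mathcal{E}_\Gamma(f_n) \to 0$, whence $\mathcal{E}(f_n) \to 0$. For the bulk form this is the classical closability criterion of Hamza type: the assumption $\alpha = 0$ $\lambda$-a.e. on $\Omega \setminus R_\alpha(\Omega)$ is exactly what is needed to run the absolutely-continuous-on-lines argument, the form picking up contributions only on the regular set $R_\alpha(\Omega)$ where $\alpha^{-1}$ is locally integrable. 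For the surface form I would localize: choose a finite atlas of bi-Lipschitz charts subordinate to a partition of unity and pull $\mathcal{E}_\Gamma$ back to a Euclidean weighted form $\int (\nabla\,\cdot\,, A_j\nabla\,\cdot\,)\,\tilde\beta_j\,dy$ with a bounded, uniformly elliptic matrix $A_j$ (from the metric tensor) and transformed weight $\tilde\beta_j$. Since the Hamza regular-point condition is local and invariant both under bi-Lipschitz changes of chart and under multiplication by a bounded elliptic coefficient, the hypothesis $\beta = 0$ $\sigma$-a.e. on $\Gamma \setminus R_\beta(\Gamma)$ transfers chartwise, and closability in each chart followed by summation yields closability of $\mathcal{E}_\Gamma$. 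This surface step — reconciling the merely Lipschitz regularity of $\Gamma$ with the Hamza criterion in charts — is where I expect the main technical effort.

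Having obtained the closure $(\mathcal{E}, D(\mathcal{E}))$, the remaining structural properties follow from checks on the core $\mathcal{D}$ that transfer to the closure by standard approximation. For the Markovian property I would show that every normal contraction decreases the form: approximating the unit contraction $t \mapsto (0\vee t)\wedge 1$ by smooth $\phi_\epsilon$ with $\|\phi_\epsilon'\|_\infty \le 1$, the chain rule gives $|\nabla(\phi_\epsilon\circ f)| \le |\nabla f|$ and $|\nabla_\Gamma(\phi_\epsilon\circ f)| \le |\nabla_\Gamma f|$ on $\mathcal{D}$, so $\mathcal{E}(\phi_\epsilon\circ f) \le \mathcal{E}(f)$; letting $\epsilon \to 0$ and invoking the standard transfer of the unit-contraction property from a core to the closed form identifies $(\mathcal{E}, D(\mathcal{E}))$ as a Dirichlet form. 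Regularity is essentially built into the construction: $\mathcal{D} = C^1(\overline{\Omega})$ is by definition dense in $D(\mathcal{E})$ with respect to $\mathcal{E}_1^{1/2}$, and since $\overline{\Omega}$ is compact (so that $C_0(\overline{\Omega}) = C(\overline{\Omega})$) it is also uniformly dense in $C(\overline{\Omega})$; thus $\mathcal{D}$ is a core and $\mathcal{E}$ is regular.

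Finally, strong locality and recurrence. On the core, if $u \in \mathcal{D}$ is constant on a neighborhood of $\supp v$, then $\nabla u = 0$ and $\nabla_\Gamma u = 0$ on $\supp \nabla v$ (respectively $\supp \nabla_\Gamma v$), so both integrands defining $\mathcal{E}(u,v)$ vanish and $\mathcal{E}(u,v) = 0$; as the form carries no killing or jumping part, this local property passes to $D(\mathcal{E})$ and gives strong locality. For recurrence I use that $\mu(\overline{\Omega}) = \int \alpha\,d\lambda + \int \beta\,d\sigma < \infty$ is finite and that the constant function $1 \in C^1(\overline{\Omega}) = \mathcal{D} \subset D(\mathcal{E})$ satisfies $\mathcal{E}(1,1) = 0$; by the standard criterion (the constant lying in the extended Dirichlet space with $\mathcal{E}(1,1) = 0$), the form is recurrent, which completes the proof.
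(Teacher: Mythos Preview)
The paper does not prove this theorem; Section~\ref{sec1d} is explicitly a summary of results from \cite{GV14b}, and Theorem~\ref{thmN=1} is quoted from there without argument. Your proposal is therefore not competing with any proof in the present paper. That said, the paper does prove the analogous $N$-particle statement in Section~\ref{sectprocess} (Lemma~\ref{lemmaclosable} through the theorem summarizing the Dirichlet form properties), and your strategy matches that one closely: closability via the Hamza-type argument coordinatewise (citing \cite[Chapter~II, Section~2a)]{MR92}), the Markov property via \cite[Chapter~I, Prop.~4.10]{MR92}, regularity from $C^\infty\subset C^1\subset D(\mathcal{E})\cap C$, strong locality checked on the core (transferred via \cite[Theo.~3.1.1 and Exercise~3.1.1]{FOT94}), and recurrence from $\mathbbm{1}\in\mathcal{D}$ with $\mathcal{E}(\mathbbm{1},\mathbbm{1})=0$.

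Your argument is sound, and the one place you correctly flag as delicate --- pulling the Hamza condition for $\beta$ through bi-Lipschitz charts when $\Gamma$ is merely Lipschitz --- is exactly where the work lies. One small caution: bi-Lipschitz charts give you a bounded measurable (not continuous) coefficient matrix $A_j$, so the chain rule and the identification of $\nabla_\Gamma$ with the pulled-back gradient must be handled in the weak sense; this is routine but should be said. Otherwise your decomposition $\mathcal{E}=\mathcal{E}_\Omega+\delta\,\mathcal{E}_\Gamma$ together with the observation that $L^2(\overline{\Omega};\mu)$ splits as $L^2(\Omega;\alpha\lambda)\oplus L^2(\Gamma;\beta\sigma)$ cleanly reduces closability to the two classical pieces, and the remaining items are handled exactly as the paper does them in the product setting.
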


As an immediate consequence we obtain the following theorem:

\begin{theorem} \label{thmdiff1}
There exists a conservative diffusion process (i.e. a strong Markov process with continuous sample paths and infinite life time)
\[ \mathbf{M}:=\big( \mathbf{\Omega}, \mathcal{F}, (\mathcal{F}_t)_{t \geq 0}, (\mathbf{X}_t)_{t \geq 0}, (\Theta_t)_{t \geq 0}, (\mathbf{P}_x)_{x \in \overline{\Omega}} \big) \]
with state space $\overline{\Omega}$ which is properly associated with $(\mathcal{E},D(\mathcal{E}))$, i.e., for all ($\mu$-versions of) $f \in \mathcal{B}_b(\overline{\Omega}) \subset L^2(\overline{\Omega};\mu)$ and all $t >0$ the function
\[ \overline{\Omega} \ni x \mapsto p_t f(x):= \mathbb{E}_x \big(f(\mathbf{X}_t) \big) := \int_{\Omega} f(\mathbf{X}_t) d\mathbf{P}_x \in \mathbb{R} \]
is a quasi continuous version of $T_t f$. $\mathbf{M}$ is up to $\mu$-equivalence unique. In particular, $\mathbf{M}$ is $\mu$-symmetric, i.e.,
\[ \int_{\overline{\Omega}} p_t f ~ g ~d\mu = \int_{\overline{\Omega}} f ~ p_t g ~d\mu \ \text{ for all } f,g \in \mathcal{B}_b(\overline{\Omega}) \ \text{ and all } t >0, \]
and has $\mu$ as invariant measure, i.e.,
\[ \int_{\overline{\Omega}} p_t f~ d\mu = \int_{\overline{\Omega}} f~ d\mu \ \text{ for all } f \in \mathcal{B}_b(\overline{\Omega}) \ \text{ and all } t >0. \]
\end{theorem}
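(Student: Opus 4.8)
The plan is to obtain the statement as a direct application of the general correspondence between regular symmetric Dirichlet forms and symmetric Markov processes (in the form developed by Fukushima, Oshima and Takeda, or by Ma and R\"ockner) to the form $(\mathcal{E},D(\mathcal{E}))$ provided by Theorem \ref{thmN=1}; so the work is to check the hypotheses of the general existence theorem and then to read off each listed property. First I would fix the topological setting: since $\Omega$ is a bounded domain, $\overline{\Omega}$ is a compact, hence locally compact separable, metric space, and $\mu=\alpha\lambda+\beta\sigma$ is a finite positive Radon measure on $\overline{\Omega}$ with full topological support --- indeed any nonempty relatively open $U\subset\overline{\Omega}$ meets $\Omega$ in a set of positive Lebesgue measure, and $\alpha>0$ $\lambda$-a.e., so $\mu(U)>0$. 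Combined with the regularity of $(\mathcal{E},D(\mathcal{E}))$ from Theorem \ref{thmN=1}, this puts us in the scope of the general theorem, which yields a Hunt process $\mathbf{M}=(\mathbf{\Omega},\mathcal{F},(\mathcal{F}_t)_{t\geq0},(\mathbf{X}_t)_{t\geq0},(\Theta_t)_{t\geq0},(\mathbf{P}_x)_{x\in\overline{\Omega}})$ on $\overline{\Omega}$ properly associated with $(\mathcal{E},D(\mathcal{E}))$ and unique up to $\mu$-equivalence; in particular, for every $f\in\mathcal{B}_b(\overline{\Omega})\subset L^2(\overline{\Omega};\mu)$ the function $p_tf$ is a quasi-continuous $\mu$-version of $T_tf$, which is precisely the ``properly associated'' assertion.

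Next I would upgrade $\mathbf{M}$ to a conservative diffusion. Strong locality of the (regular) Dirichlet form, also supplied by Theorem \ref{thmN=1}, implies that the sample paths of $\mathbf{M}$ are $\mathbf{P}_x$-a.s.\ continuous on $[0,\zeta)$, i.e.\ $\mathbf{M}$ is a diffusion. For the infinite lifetime I would invoke recurrence: a recurrent Dirichlet form is conservative, so $p_t1=1$ (equivalently, $\zeta=\infty$ $\mathbf{P}_x$-a.s.\ for q.e.\ $x$), and hence $\mathbf{M}$ has continuous paths on all of $[0,\infty)$.

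Finally, $\mu$-symmetry and the invariance of $\mu$ come from the self-adjointness of $(T_t)_{t>0}$ on $L^2(\overline{\Omega};\mu)$: since $p_tf$ is a $\mu$-version of $T_tf$, for $f,g\in\mathcal{B}_b(\overline{\Omega})$ one gets $\int_{\overline{\Omega}}p_tf\,g\,d\mu=\int_{\overline{\Omega}}T_tf\,g\,d\mu=\int_{\overline{\Omega}}f\,T_tg\,d\mu=\int_{\overline{\Omega}}f\,p_tg\,d\mu$, and taking $g\equiv1$ (which lies in $L^2(\overline{\Omega};\mu)$ because $\mu$ is finite) together with $T_t1=1$ gives $\int_{\overline{\Omega}}p_tf\,d\mu=\int_{\overline{\Omega}}f\,d\mu$. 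I do not expect a genuine obstacle here: all the analytic substance --- denseness and closability of the form, regularity, strong locality and recurrence --- is already packed into Theorem \ref{thmN=1}, which is why the present statement is ``an immediate consequence''. The only points that deserve a word are the finiteness and full support of $\mu$ (so that the associated process really lives on all of $\overline{\Omega}$ and constant functions are admissible test functions) and the routine passage, via conservativeness, from the $L^2$-identity $T_t1=1$ to the pathwise statement $\zeta=\infty$.
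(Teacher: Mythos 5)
Your proposal is correct and follows exactly the route the paper intends: the paper gives no separate proof of this theorem but presents it as an immediate consequence of Theorem \ref{thmN=1} via the standard correspondence between regular Dirichlet forms and Hunt processes (cf.\ the references to \cite[Chap.\ V, Theorem 1.11]{MR92} and \cite[Theorem 7.2.2 and Exercise 4.5.1]{FOT94} used for the analogous $N$-particle statement), with strong locality giving the diffusion property, recurrence giving conservativeness, and symmetry of $(T_t)_{t>0}$ giving $\mu$-symmetry and invariance. Your additional checks (compactness of $\overline{\Omega}$, finiteness and full support of $\mu$) are exactly the routine hypotheses one must verify to invoke that machinery.
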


If we assume the stronger conditions that $\Gamma$ is $C^2$-smooth and $\alpha, \beta \in C(\overline{\Omega})$, $\alpha >0$ $\lambda$-a.e. on $\Omega$, $\beta >0$ $\sigma$-a.e. on $\Gamma$ such that $\sqrt{\alpha} \in H^{1,2}(\Omega)$ and additionally $\sqrt{\beta} \in H^{1,2}(\Gamma)$ if $\delta=1$, it is possible to determine the generator of $(\mathcal{E},D(\mathcal{E}))$ for functions in $C^2(\overline{\Omega})$. The explicit representation of the generator allows to analyze the dynamics of $\mathbf{M}$:

\begin{theorem} \label{thmsolSDE1}
$\mathbf{M}$ is a solution to the SDE
\begin{align}
d\mathbf{X}_t =& \mathbbm{1}_{\Omega} (\mathbf{X}_t) \Big( dB_t + \frac{1}{2} \frac{\nabla \alpha}{\alpha} (\mathbf{X}_t) dt \Big) - \mathbbm{1}_{\Gamma}(\mathbf{X}_t) \frac{\alpha}{\beta}(\mathbf{X}_t) ~n(\mathbf{X}_t) dt   \notag \\ 
&+ \delta ~ \mathbbm{1}_{\Gamma}(\mathbf{X}_t) \Big( dB_t^{\Gamma} + \frac{1}{2} \frac{\nabla_{\Gamma} \beta}{\beta} (\mathbf{X}_t) dt \Big), \label{SDEN=1} \\
dB_t^{\Gamma} =& P(\mathbf{X}_t) \circ dB_t, \notag \\
\mathbf{X}_0 =& x, \notag
\end{align}
for quasi every starting point $x \in \overline{\Omega}$, where $(B_t)_{t \geq 0}$ is a $d$-dimensional standard Brownian motion, i.e.,
\begin{align}
\mathbf{X}_t = x &+ \int_0^t \mathbbm{1}_{\Omega}(\mathbf{X}_s) dB_s + \int_0^t \mathbbm{1}_{\Omega}(\mathbf{X}_s) \frac{1}{2} \frac{\nabla \alpha}{\alpha}(\mathbf{X}_s) ds \notag \\
&+ \delta \int_0^t \mathbbm{1}_{\Gamma}(\mathbf{X}_s) P(\mathbf{X}_s) dB_s - \delta \int_0^t  \mathbbm{1}_{\Gamma}(\mathbf{X}_s) \frac{1}{2} \kappa(\mathbf{X}_s) n(\mathbf{X}_s) ds \label{qesolution} \\
&+ \delta \int_0^t  \mathbbm{1}_{\Gamma}(\mathbf{X}_s) \frac{1}{2} \frac{\nabla_{\Gamma} \beta}{\beta}(\mathbf{X}_s) ds - \int_0^t \frac{1}{2} \frac{\alpha}{\beta}(\mathbf{X}_s)  \mathbbm{1}_{\Gamma}(\mathbf{X}_s)n(\mathbf{X}_s) ds \notag
\end{align}
almost surely under $\mathbf{P}_x$ for quasi every $x \in \overline{\Omega}$.
\end{theorem}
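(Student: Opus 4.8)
The plan is to identify the $L^2(\overline{\Omega};\mu)$-generator of $\mathbf{M}$ on $C^2$-functions, apply Fukushima's decomposition to the Cartesian coordinate functions, and recover the driving Brownian motion from a martingale representation. By Theorem~\ref{thmdiff1} the diffusion $\mathbf{M}$ is properly associated with the regular, strongly local, $\mu$-symmetric Dirichlet form $(\mathcal{E},D(\mathcal{E}))$ of Theorem~\ref{thmN=1}, and it is conservative. The first step is to show $C^2(\overline{\Omega})\subset D(L)$ with
\begin{align*}
Lu=\mathbbm{1}_{\Omega}\Big(\tfrac12\Delta u+\tfrac12\tfrac{\nabla\alpha}{\alpha}\cdot\nabla u\Big)+\mathbbm{1}_{\Gamma}\Big({-}\tfrac12\tfrac{\alpha}{\beta}\,n\cdot\nabla u+\tfrac{\delta}{2}\Delta_{\Gamma}u+\tfrac{\delta}{2}\tfrac{\nabla_{\Gamma}\beta}{\beta}\cdot\nabla_{\Gamma}u\Big).
\end{align*}
This follows by integrating by parts in $\mathcal{E}(u,v)$ for $v\in C^1(\overline{\Omega})$: the Gauss--Green formula in $\Omega$ turns $\tfrac12\int_{\Omega}(\nabla u,\nabla v)\,\alpha\,d\lambda$ into $-\tfrac12\int_{\Omega}(\Delta u+\tfrac{\nabla\alpha}{\alpha}\cdot\nabla u)\,v\,\alpha\,d\lambda$ plus the boundary flux $\tfrac12\int_{\Gamma}\alpha\,(n,\nabla u)\,v\,d\sigma$, and the surface divergence theorem~(\ref{divergence}) turns $\tfrac{\delta}{2}\int_{\Gamma}(\nabla_{\Gamma}u,\nabla_{\Gamma}v)\,\beta\,d\sigma$ into $-\tfrac{\delta}{2}\int_{\Gamma}(\Delta_{\Gamma}u+\tfrac{\nabla_{\Gamma}\beta}{\beta}\cdot\nabla_{\Gamma}u)\,v\,\beta\,d\sigma$; writing the boundary flux against $d\mu|_{\Gamma}=\beta\,d\sigma$ produces precisely the Wentzell-type term $-\tfrac12\tfrac{\alpha}{\beta}\,n\cdot\nabla u$, so that $\mathcal{E}(u,v)=-\int_{\overline{\Omega}}(Lu)\,v\,d\mu$. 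The hypotheses $\sqrt{\alpha}\in H^{1,2}(\Omega)$, $\sqrt{\beta}\in H^{1,2}(\Gamma)$ (and boundedness of $\kappa$ since $\Gamma$ is $C^2$) guarantee $Lu\in L^2(\mu)$. Since $\mu$ is the symmetrizing measure, the same computation yields the carr\'e du champ: the energy measure of $u$ relative to $\mu$ has density $\mathbbm{1}_{\Omega}|\nabla u|^2+\delta\,\mathbbm{1}_{\Gamma}|\nabla_{\Gamma}u|^2$.

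Apply Fukushima's decomposition in the strict sense (i.e.\ $\mathbf{P}_x$-a.s.\ for quasi every $x$) to the coordinate functions $p_k\in C^{\infty}(\overline{\Omega})\subset D(L)$, $p_k(y):=y_k$, $k=1,\dots,d$, which are bounded because $\Omega$ is bounded. This gives $p_k(\mathbf{X}_t)-p_k(x)=M^{[p_k]}_t+\int_0^t Lp_k(\mathbf{X}_s)\,ds$, where $M^{[p_k]}$ is a continuous martingale additive functional and, by the Revuz correspondence together with $\nabla p_k=e_k$ and $\nabla_{\Gamma}p_k=Pe_k$,
\begin{align*}
\langle M^{[p_k]},M^{[p_j]}\rangle_t=\int_0^t\big(\mathbbm{1}_{\Omega}\,\delta_{kj}+\delta\,\mathbbm{1}_{\Gamma}\,P_{kj}\big)(\mathbf{X}_s)\,ds .
\end{align*}
Moreover, from the formula for $L$ and the identity $\Delta_{\Gamma}p_k=-\kappa\,n_k$ (which is Lemma~\ref{lemcurv} applied to $\nabla_{\Gamma}p_k=Pe_k$) we get $Lp_k=\mathbbm{1}_{\Omega}\tfrac12\tfrac{\partial_k\alpha}{\alpha}+\mathbbm{1}_{\Gamma}\big({-}\tfrac12\tfrac{\alpha}{\beta}n_k-\tfrac{\delta}{2}\kappa\,n_k+\tfrac{\delta}{2}\tfrac{(\nabla_{\Gamma}\beta)_k}{\beta}\big)$. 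Hence the $\mathbb{R}^d$-valued martingale $M:=(M^{[p_1]},\dots,M^{[p_d]})$ has matrix covariation $\int_0^t C(\mathbf{X}_s)C(\mathbf{X}_s)^t\,ds$ with $C:=\mathbbm{1}_{\Omega}E+\delta\,\mathbbm{1}_{\Gamma}P$ (using $P^2=P$ and $\mathbbm{1}_{\Omega}\mathbbm{1}_{\Gamma}=0$).

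By the martingale representation theorem --- after enlarging the probability space to adjoin an independent Brownian motion on the directions not seen by $C$ on $\Gamma$ (the normal direction if $\delta=1$, all $d$ directions if $\delta=0$), which is exactly the ``extra information'' carried by a $d$-dimensional driving motion noted above --- there is a $d$-dimensional standard Brownian motion $(B_t)_{t\ge0}$ with $M_t=\int_0^t C(\mathbf{X}_s)\,dB_s$, that is $M^{[p_k]}_t=\int_0^t\mathbbm{1}_{\Omega}(\mathbf{X}_s)\,dB^k_s+\delta\int_0^t\mathbbm{1}_{\Gamma}(\mathbf{X}_s)\,(P(\mathbf{X}_s)\,dB_s)^k$. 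Substituting this and the expression for $Lp_k$ into the Fukushima decomposition yields exactly the componentwise form of~(\ref{qesolution}). Finally, since $S=P$ is symmetric and $C^1$, the It\^o--Stratonovich rule~(\ref{itoform}) together with Lemma~\ref{lemcurv} gives $\mathbbm{1}_{\Gamma}P\,dB_t-\tfrac12\mathbbm{1}_{\Gamma}\kappa\,n\,dt=\mathbbm{1}_{\Gamma}P\circ dB_t=:\mathbbm{1}_{\Gamma}\,dB^{\Gamma}_t$, which recasts~(\ref{qesolution}) as the Stratonovich SDE~(\ref{SDEN=1}); by the lemma on Brownian motion on manifolds $B^{\Gamma}=\int P\circ dB$ is a Brownian motion on $\Gamma$, and it is $\Gamma$-valued since $\mathbf{M}$ lives on $\overline{\Omega}$ and moves only tangentially while on $\Gamma$.

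I expect the main obstacle to be the rigorous first step, $C^2(\overline{\Omega})\subset D(L)$ with the stated $L$: the integration by parts must be justified when $\alpha,\beta$ are merely continuous with weakly differentiable square roots, which requires approximating $\alpha,\beta$ (and the associated forms) so that the bulk drift $\tfrac{\nabla\alpha}{\alpha}$, the tangential drift $\tfrac{\nabla_{\Gamma}\beta}{\beta}$ and the boundary flux all converge in the appropriate $L^2$-spaces and $Lu$ genuinely belongs to $L^2(\mu)$. The space enlargement in the martingale representation, and the bookkeeping of exceptional sets through the strict Fukushima decomposition, are routine by comparison.
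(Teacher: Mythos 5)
Your argument is correct, but it follows a genuinely different route from the one the paper (and its companion \cite{GV14b}) uses. Theorem \ref{thmsolSDE1} is only recalled here without proof; the paper's own proof of the analogous $N$-particle statement, Theorem \ref{thmsolSDE}, starts from the same generator computation (Proposition \ref{propgen}, the $N=1$ case of which is exactly your integration-by-parts step producing the Wentzell term $-\tfrac12\tfrac{\alpha}{\beta}(n,\nabla u)$ on $\Gamma$), but then passes through the $L^2$-martingale problem for $(L,D(L))$ via \cite[Theorem 3.4 (i)]{AR95} and invokes the Stroock--Varadhan-type representation \cite[Theorem 18.7]{Kal97} (Lemma \ref{lemSV}), which needs only the algebraic identity $AA^t=A$ for $A=\mathbbm{1}_{\Omega}E+\delta\,\mathbbm{1}_{\Gamma}P$ and packages the filtration enlargement and martingale representation into a single citation. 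You instead run the classical Dirichlet-form route: strict Fukushima decomposition of the coordinate functions, identification of the brackets through the energy measures and the Revuz correspondence, and an explicit martingale representation on an enlarged space; the remark following Theorem \ref{thmsolSDE} explicitly acknowledges that this yields the same result, so your approach is a recognized alternative rather than a deviation. The essential analytic content is shared --- the generator on $C^2(\overline{\Omega})$, the identity $\Delta_{\Gamma}p_k=-\kappa n_k$ from Lemma \ref{lemcurv}, and the It\^o--Stratonovich conversion --- and you correctly isolate the one genuinely delicate step, namely justifying $C^2(\overline{\Omega})\subset D(L)$ when $\alpha,\beta$ are merely continuous with $H^{1,2}$ square roots. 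What the paper's route buys is that the covariation structure is never computed by hand (it is forced by the term $\operatorname{Tr}(A\nabla^2 f)$ once the martingale problem is tested against all smooth $f$); what yours buys is that the martingale part is canonically attached to the form via additive functionals, which is the more natural language if one wants to read off, say, the occupation-time interpretation of the boundary drift. (A cosmetic point only: your coefficient $\tfrac12\tfrac{\alpha}{\beta}$ in $Lp_k$ matches the integrated form (\ref{qesolution}); the missing factor $\tfrac12$ in the displayed SDE (\ref{SDEN=1}) is an inconsistency already present in the paper's own statements.)
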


If we suppose addtionally to the assumptions of Theorem \ref{thmsolSDE1} that there exists $p \geq 2$ with $p > \frac{d}{2}$ such that
\[ \mathbbm{1}_{\Omega} \frac{ | \nabla \alpha | }{\alpha} + \delta ~\mathbbm{1}_{\Gamma} \frac{| \nabla \beta |}{\beta} \in L^p_{\text{loc}}(\overline{\Omega} \cap \{ \varrho >0\}; \mu) \]
and $\text{cap}_{\mathcal{E}}(\{ \varrho=0\})=0$,
we even obtain a stronger version:

\begin{theorem} There exists a conservative diffusion process
\[ \mathbf{M}=\big( \mathbf{\Omega}, \mathcal{F}, (\mathcal{F}_t)_{t \geq 0}, (\mathbf{X}_t)_{t \geq 0}, (\Theta_t)_{t \geq 0}, (\mathbf{P}_x)_{x \in \overline{\Omega} \cap \{ \varrho >0\}} \big)  \]
with state space $\overline{\Omega} \cap \{ \varrho >0\}$ such that $\mathbf{M}$ solves (\ref{SDEN=1}) for every $x \in \overline{\Omega} \cap \{\varrho >0\}$. Moreover, its Dirichlet form is given by $(\mathcal{E},D(\mathcal{E}))$ on $L^2(\overline{\Omega} \cap \{ \varrho >0\};\mu)$ and the transition semigroup $(p_t)_{t >0}$ of
$\mathbf{M}$ is $\mathcal{L}^p$-strong Feller, i.e., $p_t (\mathcal{L}^p (\overline{\Omega} \cap \{\varrho >0\};\mu)) \subset C(\overline{\Omega} \cap \{ \varrho >0\})$. In particular, $(p_t)_{t >0}$ it strong Feller, i.e., $p_t (\mathcal{B}_b(\overline{\Omega} \cap \{\varrho >0\})) \subset C(\overline{\Omega} \cap \{ \varrho >0\})$. Furhtermore, $\mathbf{M}$ has a sticky boundary behavior, i.e.,
\begin{align*} \lim_{t \rightarrow \infty} \frac{1}{t} \int_0^t \mathbbm{1}_{\Gamma}(\mathbf{X}_s) ds  >0 
\end{align*} 
$\mathbf{P}_x$-a.s. for every $x \in \overline{\Omega} \cap \{ \varrho >0\}$ such that $x$ is in a component of $\overline{\Omega} \cap \{ \varrho >0\}$ intersecting $\Gamma$.
\end{theorem}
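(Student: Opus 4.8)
The plan is to improve Theorem \ref{thmsolSDE1} from quasi every to every starting point by establishing regularity of the transition semigroup, and then to deduce the sticky behaviour from recurrence and the ergodic theorem. First I would remove the degeneracy set. Since $\text{cap}_{\mathcal{E}}(\{\varrho=0\})=0$, the set $\{\varrho=0\}$ is $\mathcal{E}$-exceptional and (after possibly enlarging it inside a $\mu$-null set) may be deleted from the state space; the resulting process is still a conservative diffusion properly associated with $(\mathcal{E},D(\mathcal{E}))$, now viewed on $L^2(E;\mu)$ with $E:=\overline{\Omega}\cap\{\varrho>0\}$ --- the $L^2$-spaces agree because $\mu(\{\varrho=0\})=0$. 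The Fukushima decomposition of the coordinate functions and the identification of the generator in Theorem \ref{thmsolSDE1} are unaffected by this restriction.

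Next, and this is the analytic core, I would prove the $\mathcal{L}^p$-strong Feller property. On $E\cap\Omega$ the generator is the divergence-form operator $\tfrac12\alpha^{-1}\,\text{div}(\alpha\nabla\cdot)=\tfrac12\Delta+\tfrac12\tfrac{\nabla\alpha}{\alpha}\cdot\nabla$, and near $\Gamma$ it carries in addition the tangential Wentzell part $\tfrac12\beta^{-1}\,\text{div}_\Gamma(\beta\nabla_\Gamma\cdot)$ and the conormal contribution $-\tfrac{\alpha}{\beta}\,n\cdot\nabla$. Because $p\geq2$, $p>\tfrac d2$ and $\mathbbm{1}_\Omega\tfrac{|\nabla\alpha|}{\alpha}+\delta\,\mathbbm{1}_\Gamma\tfrac{|\nabla\beta|}{\beta}\in L^p_{\text{loc}}(E;\mu)$, interior De Giorgi--Nash--Moser / Morrey estimates for second-order operators with $L^p$-drift yield local H\"older bounds for resolvent solutions $u=G_\lambda f$ with $f\in\mathcal{L}^p$; the regularity up to $\Gamma$ is obtained by flattening the boundary (using $\Gamma\in C^2$ and $\sqrt\beta\in H^{1,2}(\Gamma)$) and treating the Wentzell boundary condition, exactly along the lines of the half-line analysis in \cite{GV14a} and its extension to bounded domains in \cite{GV14b}. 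This gives $G_\lambda(\mathcal{L}^p(E;\mu))\subset C(E)$, and passing to the semigroup via $p_tf=p_{t/2}(p_{t/2}f)$, the resolvent identity and the bootstrap $\mathcal{L}^p\to C\cap\mathcal{L}^\infty\to\cdots$ gives $p_t(\mathcal{L}^p(E;\mu))\subset C(E)$, hence in particular the strong Feller property. I expect this step --- up-to-the-boundary regularity for a degenerate-coefficient operator with non-Lipschitz drift and a Wentzell condition --- to be the main obstacle.

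With this in hand, I would pass from quasi every to every starting point. Let $G\subset E$ be the full-capacity set of starting points for which (\ref{qesolution}) holds (Theorem \ref{thmsolSDE1}); then $E\setminus G$ is $\mu$-null, so absolute continuity of the transition kernel yields $\mathbf{P}_x(\mathbf{X}_s\in G)=1$ for all $x\in E$ and $s>0$. Building the driving Brownian motion from the martingale additive functional of the Fukushima decomposition, augmented by an independent Brownian motion on the directions annihilated by $\mathbbm{1}_\Omega E+\delta\,\mathbbm{1}_\Gamma P$ (which needs $d\geq2$ when $\delta=1$), the integral identity (\ref{qesolution}) holds $\mathbf{P}_y$-a.s. for $y\in G$; the Markov property at a time $s>0$ together with path continuity and the limit $s\downarrow0$ then propagates this identity, and hence (\ref{SDEN=1}), to every $x\in E$.

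Finally, for the sticky behaviour, $(\mathcal{E},D(\mathcal{E}))$ is recurrent (Theorem \ref{thmN=1}) with finite invariant measure $\mu$; on each connected component $E_0$ of $E$ strong locality confines the process, and by the strong Feller property any invariant set admits a continuous $\{0,1\}$-valued version on $E_0$, so $E_0$ is an ergodicity class. Birkhoff's ergodic theorem then gives, for every $x\in E_0$, that $\tfrac1t\int_0^t\mathbbm{1}_\Gamma(\mathbf{X}_s)\,ds\to\mu(\Gamma\cap E_0)/\mu(E_0)$ $\mathbf{P}_x$-a.s.; if $E_0$ meets $\Gamma$ then $\Gamma\cap E_0$ contains a nonempty relatively open subset of $\Gamma$, whence $\sigma(\Gamma\cap E_0)>0$, and since $\beta>0$ $\sigma$-a.e. we obtain $\mu(\Gamma\cap E_0)=\int_{\Gamma\cap E_0}\beta\,d\sigma>0$, which is the claim.
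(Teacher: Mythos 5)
The paper does not prove this statement at all: Section~2.4 opens with ``In the following we recall the main results of \cite{GV14b}'', and this theorem is quoted from that reference without argument, so there is no in-paper proof to compare against. Your outline is nonetheless the standard route and, as far as one can tell, essentially the one taken in the cited work: delete the capacity-zero set $\{\varrho=0\}$, establish the $\mathcal{L}^p$-strong Feller property of the resolvent/semigroup by elliptic regularity for the degenerate divergence-form operator with Wentzell boundary condition, upgrade the q.e.\ statement of Theorem~\ref{thmsolSDE1} to every starting point via absolute continuity of the transition kernel and the Markov property, and get stickiness from recurrence, irreducibility on components, and the ergodic theorem together with $\mu(\Gamma\cap E_0)=\int_{\Gamma\cap E_0}\beta\,d\sigma>0$. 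The last step is correct as you argue it (continuity of $\varrho$ makes $E_0$ relatively open, so $\Gamma\cap E_0$ is a nonempty relatively open subset of $\Gamma$ of positive $\sigma$-measure).

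Two caveats. First, the analytic core --- H\"older regularity up to $\Gamma$ for resolvent solutions of a Wentzell problem with merely $L^p_{\mathrm{loc}}$ drift and continuous, possibly vanishing coefficients $\alpha,\beta$ --- is precisely the content of the theorem, and you only gesture at it (``along the lines of \cite{GV14a}, \cite{GV14b}''). You correctly identify it as the main obstacle, but as written this is a citation, not a proof; everything else in your sketch is routine once this is granted. Second, in the ergodic step Birkhoff's theorem only gives the Ces\`aro limit for $\mu$-a.e.\ starting point; to get it for \emph{every} $x\in E_0$ you need one more application of the absolute-continuity/Markov-property argument you already used for the SDE (shift by $s>0$, then let $s\downarrow 0$), and also a short argument that the limit is $\mathbf{P}_x$-a.s.\ constant rather than merely positive in mean. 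Also note a notational collision: you use $E$ both for the state space $\overline{\Omega}\cap\{\varrho>0\}$ and, implicitly via the paper's notation, for the identity matrix in $\mathbbm{1}_\Omega E+\delta\,\mathbbm{1}_\Gamma P$.
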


\section{The Dirichlet form and the associated Markov process}\label{sectprocess}

\subsection{General setting} \label{secgeneral}

Assume that $\Gamma:=\partial \Omega$ is Lipschitz continuous. Let $(\mathcal{G},D(\mathcal{G}))$ be the recurrent, strongly local, regular, symmetric Dirichlet form on $L^2(\overline{\Omega};\lambda + \sigma)$ in accordance with Theorem \ref{thmN=1} for $\alpha=\beta=\mathbbm{1}_{\overline{\Omega}}$.
Set $\Lambda:= \overline{\Omega}^N$. Note that $\Lambda \subset \mathbb{R}^{Nd}$ is connected and compact. In the following we use the product measure $\prod_{i=1}^N \mu_i$ on $\Lambda$, where $\mu_i:=\lambda_i + \sigma_i$ is defined on $\overline{\Omega}$ and the index $i$ gives reference to the corresponding coordinate. For functions $f,g \in C^1(\Lambda)$, $i \in I$ and $x^j \in \overline{\Omega}$ for $j \in I$, $j \neq i$, define 
{\small \[ \mathcal{E}^i(f,g)(x^1,\dots,x^{i-1},x^{i+1},\dots,x^N) := \mathcal{G}(f(x^1,\dots,x^{i-1},\cdot,x^{i+1},\dots,x^N),g(x^1,\dots,x^{i-1},\cdot,x^{i+1},\dots,x^N)). \]}

Define the symmetric bilinear form $(\tilde{\mathcal{E}},\mathcal{D})$ by
\begin{align} \tilde{\mathcal{E}}(f,g)&:= \sum_{i=1}^N \int_{\overline{\Omega}^{N-1}} \mathcal{E}^i(f,g) \prod_{j \neq i} d\mu_j \quad \text{for } f,g \in \mathcal{D}:=C^1(\Lambda) \label{formbouleau} .
\end{align}
Using the definition of the form $\mathcal{G}$ yields
\begin{align}
\tilde{\mathcal{E}}(f,g) &= \frac{1}{2} \int_{\Lambda} \underbrace{\sum_{i=1}^N \big( \mathbbm{1}_{\Lambda^{i,\Omega}}~ (\nabla_i f, \nabla_i g) + \delta ~\mathbbm{1}_{\Lambda^{i,\Gamma}} (\nabla_{\Gamma,i} f, \nabla_{\Gamma,i} g) \big)}_{=: \Gamma(f,g)} \prod_{j=1}^N d\mu_j, \label{formsquare}
\end{align}
where $\Lambda^{i,\Omega}:=\{ x=(x_1,\dots,x_N) \in \Lambda|~ x_i \in \Omega \}$ and $\Lambda^{i,\Gamma}:=\{ x=(x_1,\dots,x_N) \in \Lambda|~ x_i \in \Gamma \}$. In particular, $\Lambda^{i,\Omega} ~\dot{\cup}~ \Lambda^{i,\Gamma}=\Lambda$ for every $i=1,\dots,N$.

\begin{condition} \label{condL1} $\varrho \in L^1(\Lambda;~\prod_{i=1}^N \mu_i)$,  $\varrho >0$ $\prod_{i=1}^N \mu_i$-a.e..
\end{condition}

Define $\mu$ by $\mu:=\varrho~ \prod_{j=1}^N \mu_j$ and $(\mathcal{E},\mathcal{D})$ by
\begin{align} \mathcal{E}(f,g)&:= \frac{1}{2} \int_{\Lambda} \sum_{i=1}^N \big( \mathbbm{1}_{\Lambda^{i,\Omega}}~ (\nabla_i f, \nabla_i g) + \delta ~\mathbbm{1}_{\Lambda^{i,\Gamma}} (\nabla_{\Gamma,i} f, \nabla_{\Gamma,i} g) \big) ~\varrho \prod_{j=1}^N d\mu_j \label{Nform} \\
&=\frac{1}{2} \int_{\Lambda} \Gamma(f,g)~ \varrho \prod_{j=1}^N d\mu_j \notag \\
&=\frac{1}{2} \int_{\Lambda} \Gamma(f,g)~ d\mu \notag
\end{align}
Note that the case $\delta=0$ correpsonds to the setting of a system of particles which has a sticky but static boundary behavior. Then, the bilinear form $(\mathcal{E},\mathcal{D})$ can be written in the simpler form
\[ \mathcal{E}(f,g)=\frac{1}{2} \int_{\Lambda} \sum_{i=1}^N \mathbbm{1}_{\Lambda^{i,\Omega}} (\nabla_i f,\nabla_i g) ~\varrho \prod_{j=1}^N d\mu_j \quad \text{for } f,g \in \mathcal{D}.\]

By the fact that $\mu$ is a Baire measure on $\Lambda$ we get the following result:

\begin{proposition} \label{propdense}
Under Condition \ref{condL1} we have that $C^{\infty}(\Lambda)$ is dense in $L^2(\Lambda;\mu)$.
\end{proposition}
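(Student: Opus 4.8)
The plan is to prove density of $C^\infty(\Lambda)$ in $L^2(\Lambda;\mu)$ in two stages: first approximate a general $L^2$-function by bounded continuous functions, then approximate continuous functions by smooth ones in the $L^2(\Lambda;\mu)$-norm. For the first stage I would invoke that $\mu$ is a finite Baire (in fact Radon) measure on the compact metric space $\Lambda \subset \mathbb{R}^{Nd}$: finiteness comes from Condition \ref{condL1}, $\varrho \in L^1(\Lambda;\prod_i\mu_i)$, so $\mu$ is a finite measure, and hence $C(\Lambda)$ is dense in $L^2(\Lambda;\mu)$ by the standard Lusin/regularity argument for finite Borel measures on Polish spaces (or, equivalently, because simple functions are dense and indicator functions of Borel sets can be approximated in $L^2(\mu)$ by continuous functions).

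For the second stage, let $f \in C(\Lambda)$ and let $\varepsilon > 0$. Since $\Lambda$ is compact, $f$ is uniformly continuous, and I would mollify or use a Stone–Weierstrass-type argument: restrictions of polynomials (or of $C^\infty(\mathbb{R}^{Nd})$-functions) to $\Lambda$ are dense in $C(\Lambda)$ with respect to the sup-norm $\Vert \cdot \Vert_\infty$. Such restrictions lie in $C^\infty(\Lambda)$ in the sense of the paper. Because $\mu(\Lambda) < \infty$, sup-norm convergence implies $L^2(\Lambda;\mu)$-convergence: if $\Vert f - g\Vert_\infty < \varepsilon$ with $g \in C^\infty(\Lambda)$, then $\Vert f - g\Vert_{L^2(\Lambda;\mu)} \le \varepsilon\, \mu(\Lambda)^{1/2}$. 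Chaining the two stages gives the claim.

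The only genuine subtlety — and what I expect to be the main point to get right — is making sure the notion of $C^\infty(\Lambda)$ used here (restrictions to $\Lambda = \overline{\Omega}^N$ of smooth functions, consistent with the $C^k(\Gamma_0)$ convention fixed earlier in the excerpt) is the one for which Stone–Weierstrass applies: one needs that this algebra separates points of $\Lambda$ and contains the constants, which is immediate since it contains the coordinate functions. There is no issue with the boundary irregularity of $\Omega$ here, because we are only approximating in $L^2(\mu)$ and in sup-norm on the compact set $\Lambda$, not controlling derivatives; the Lipschitz (or rougher) nature of $\Gamma$ plays no role at this stage. So the proof is essentially: finiteness of $\mu$ $\Rightarrow$ $C(\Lambda)$ dense in $L^2(\Lambda;\mu)$; Stone–Weierstrass $\Rightarrow$ $C^\infty(\Lambda)$ dense in $C(\Lambda)$ in sup-norm; finiteness of $\mu$ again $\Rightarrow$ sup-norm density upgrades to $L^2(\Lambda;\mu)$-density.
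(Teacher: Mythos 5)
Your argument is correct and is essentially the paper's own: the paper dispenses with the proof by noting that $\mu$ is a (finite) Baire measure on the compact space $\Lambda$, which is precisely your first stage, and the upgrade from $C(\Lambda)$ to $C^{\infty}(\Lambda)$ via Stone--Weierstrass together with finiteness of $\mu$ is the standard completion of that remark. No gaps.
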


Define $\Lambda_B:=\{ x \in \Lambda|~x_i \in \Omega \text{ for } i \in B,~x_i \in \Gamma \text{ for } i \in I \backslash B \}$ and $\nu_B:=\prod_{i \in B} \lambda_i \prod_{i \in I \backslash B} \sigma_i$. Then
\[ \Lambda = \dot{\bigcup}_{B \subset I} ~\Lambda_B \quad \text{and } \mu=\sum_{B \subset I} \underbrace{\varrho~ \nu_B}_{=: \mu_B}. \]
In this terms it holds
\[ \mathcal{E}(f,g)=\sum_{\emptyset \neq B \subset I} \mathcal{E}_B(f,g) \quad \text{for  } f,g \in \mathcal{D}, \]
where
\[ \mathcal{E}_B(f,g):= \frac{1}{2} ~\int_{\Lambda_B} \sum_{i \in B} (\nabla_i f,\nabla_i g) + \delta \sum_{i \in I \backslash B} (\nabla_{\Gamma,i} f, \nabla_{\Gamma,i} g)~ d\mu_B. \]
Moreover, define for $x \in \Gamma^{N-|B|}$, $B \neq \emptyset$, and $\varrho \in L^1(\Lambda;\prod_{i=1}^N \mu_i)$
\[ R_{\varrho}^{\Omega}(B,x):=\{ y \in \Omega^{|B|}|~\int_{\{z \in \Omega^{|B|}|~|z-y|<\epsilon\}} \varrho^{-1} ~\prod_{i \in B} \lambda_i < \infty \text{ for some } \epsilon >0 \}. \]
The dependence of $x$ is given in the sense that the variables of $\varrho$ given by the index set $I \backslash B$ are fixed by the components of $x$. Since $\varrho$ is an element of $L^1(\Lambda_B;\mu_B)$, $R_{\varrho}^{\Omega}(B,x)$ is only defined $\prod_{i \in I \backslash B} \sigma_i$ almost everywhere. 
Similarly, for $y \in \Omega^{|B|}$ let $R_{\varrho}^{\Gamma}(B,y)$ be given by
\[ R_{\varrho}^{\Gamma}(B,y):=\{ x \in \Gamma^{N-|B|}|~\int_{\{z \in \Gamma^{N-|B|}|~|z-x|<\epsilon\}} \varrho^{-1} ~\prod_{i \in I \backslash B} \sigma_i < \infty \text{ for some } \epsilon >0 \}. \]
In this case, the variables of $\varrho$ given by the index set $B$ are fixed by the components of $y$ and $R_{\varrho}^{\Gamma}(B,y)$ is only defined $\prod_{i \in B} \lambda_i$ almost everywhere.
Note that in both cases $B$ determines the components which are \textit{not} at the boundary. \\
The following condition is a generalized version of the usual Hamza condition (see e.g. \cite[Chapter II, (2.4)]{MR92}):

\begin{condition}[Hamza condition] \label{condhamza}
It holds 
\begin{enumerate}
\item[(H1)] $\varrho=0$ $\prod_{i \in B} \lambda_i$-a.e. on $\Omega^{|B|} \backslash R_{\varrho}^{\Omega}(B,x)$ for $\prod_{i \in I \backslash B} \sigma_i$-a.e. $x \in \Gamma^{N-|B|}$ for every $\emptyset \neq B \subset I$
\end{enumerate}
and if $\delta=1$ additionally
\begin{enumerate}
\item[(H2)] $\varrho=0$ $\prod_{i \in I \backslash B} \sigma_i$-a.e. on $\Gamma^{N-|B|} \backslash R_{\varrho}^{\Gamma}(B,y)$ for $\prod_{i \in B} \lambda_i$-a.e. $y \in \Omega^{|B|}$ for every $ B \subsetneq I$. 
\end{enumerate}
(For $B=I$ the condition \textit{(H1)} and for $B=\emptyset$ the condition \textit{(H2)} reduce to the ordinary Hamza condition.)
\end{condition}

\begin{remark} \begin{enumerate}
\item Condition \ref{condhamza} is a natrual generalizaion of the ordinary Hamza condition, since in the present setting of sticky particles we are also interested in dynamics whenever one (or several) particles are located at the boundary. The set $B$ determines the components inside $\Omega$ and its complement $I \backslash B$ the components on $\Gamma$. Thus, \textit{(H1)} ensures that the Hamza condition for the components inside $\Omega$ is fulfilled, wherever the remaining components stick on $\Gamma$. Since we are also interested in dynamics on $\Gamma$ if $\delta=1$, \textit{(H2)} is the corresponding condition in this case.
\item For $\Omega=(0,\infty)$ \textit{(H1)} of Condition \ref{condhamza} coincides with \cite[Condition 2.7]{FGV14} (disregarding that $(0,\infty)$ is unbounded), since in this case the surface measure on $\Gamma$ reduces to the case of the point measure in $0$.
\end{enumerate}
\end{remark}

\begin{remark} \label{remcontHamza}
If $\varrho$ is e.g. continuous on $\Lambda$ and positive $\prod_{i=1}^N \mu_i$-a.e., then $\varrho$ is outside the set $\{\varrho=0\}$ locally bounded away from zero and hence, $R_{\varrho}^{\Omega}(B,x)=\{ y \in \Omega^{|B|}|~ \varrho(z_{(B,x,y)})>0\}$ and $R_{\varrho}^{\Gamma}(B,y)=\{ x \in \Gamma^{N-|B|}|~ \varrho(z_{(B,x,y)})>0\}$, where \[z^i_{(B,x,y)}=\left\{\begin{array}{cl} y^{\gamma_B(i)}, & \mbox{if } i \in B \\ 
                                      x^{i-\gamma_B(i)}, & \mbox{if } i \in I \backslash B \end{array}\right.
                                       \]
with $\gamma_B:I \rightarrow \{1,\dots,|B|\}, i \mapsto |\{ 1 \leq j \leq i|~j \in B\}|$. Hence, 
\[ \Omega^{|B|} \backslash R_{\varrho}^{\Omega}(B,x)=\{ y \in \Omega^{|B|}|~\varrho(z_{(B,x,y)})=0\} \]
and 
\[ \Gamma^{N-|B|} \backslash R_{\varrho}^{\Gamma}(B,x)=\{ y \in \Gamma^{N-|B|}|~\varrho(z_{(B,x,y)})=0\} \]
for every $x \in \Gamma^{N-|B|}$, $y \in \Omega^{|B|}$ and Condition \ref{condhamza} is fulfilled.
\end{remark}

\begin{lemma} \label{lemmaclosable}
Suppose that Condition \ref{condL1} and Condition \ref{condhamza} are satisfied. Then the bilinear form $(\mathcal{E},\mathcal{D})$ is closable on $L^2(\Lambda;\mu)$. 
\end{lemma}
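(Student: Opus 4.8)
The plan is to exploit the two decompositions of $\mathcal{E}$ already recorded above, reducing the closability statement to the one-block weighted Hamza result behind Theorem \ref{thmN=1} by a disintegration argument, and to use the elementary fact that a \emph{finite} sum $\mathcal{E}=\sum_k\mathcal{E}^{(k)}$ of nonnegative, densely defined symmetric forms with a common domain is closable as soon as each summand is closable: if $(f_n)$ is $\mathcal{E}$-Cauchy with $f_n\to 0$ in $L^2$, then, since $0\le\mathcal{E}^{(k)}\le\mathcal{E}$, the sequence $(f_n)$ is $\mathcal{E}^{(k)}$-Cauchy for every $k$, hence $\mathcal{E}^{(k)}(f_n,f_n)\to 0$ and therefore $\mathcal{E}(f_n,f_n)\to 0$. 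Applying this first to $\mathcal{E}=\sum_{B\subseteq I}\mathcal{E}_B$ — using that the $\Lambda_B$ partition $\Lambda$, so that $L^2(\Lambda;\mu)=\bigoplus_B L^2(\Lambda_B;\mu_B)$ and the restrictions of $C^1(\Lambda)$ are dense in each $L^2(\Lambda_B;\mu_B)$ by the Baire-measure argument of Proposition \ref{propdense} — and then once more to the splitting $\mathcal{E}_B=\mathcal{E}_B^{\Omega}+\delta\,\mathcal{E}_B^{\Gamma}$ with
\[ \mathcal{E}_B^{\Omega}(f,g):=\tfrac12\int_{\Lambda_B}\sum_{i\in B}(\nabla_i f,\nabla_i g)\,d\mu_B,\qquad \mathcal{E}_B^{\Gamma}(f,g):=\tfrac12\int_{\Lambda_B}\sum_{i\in I\setminus B}(\nabla_{\Gamma,i} f,\nabla_{\Gamma,i} g)\,d\mu_B, \]
reduces the claim to: $\mathcal{E}_B^{\Omega}$ is closable on $L^2(\Lambda_B;\mu_B)$ for every $\emptyset\neq B\subseteq I$, and (only when $\delta=1$) $\mathcal{E}_B^{\Gamma}$ is closable for every $B\subsetneq I$; the remaining blocks vanish identically.

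For $\mathcal{E}_B^{\Omega}$ I would disintegrate $\mu_B=\varrho\,\nu_B$ over the boundary variables, identifying $\Lambda_B$ with $\Omega^{|B|}\times\Gamma^{N-|B|}$ through the index map $\gamma_B$ of Remark \ref{remcontHamza}; Fubini then gives
\[ \mathcal{E}_B^{\Omega}(f,g)=\int_{\Gamma^{N-|B|}}\Big(\tfrac12\int_{\Omega^{|B|}}\sum_{i\in B}(\nabla_i f,\nabla_i g)\,\varrho(z_{(B,x,y)})\,dy\Big)\prod_{i\in I\setminus B}d\sigma_i, \]
where $dy:=\prod_{i\in B}d\lambda_i$ and $x$ ranges over the boundary coordinates. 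For $\prod_{i\in I\setminus B}\sigma_i$-a.e.\ fixed $x$ the inner integral is the weighted-gradient form on the open set $\Omega^{|B|}\subset\mathbb{R}^{|B|d}$ with weight $y\mapsto\varrho(z_{(B,x,y)})$, and hypothesis (H1) of Condition \ref{condhamza} says precisely that this weight vanishes $\prod_{i\in B}\lambda_i$-a.e.\ off its regular set $R_{\varrho}^{\Omega}(B,x)$, i.e.\ the multidimensional Hamza condition holds; hence the inner form is closable on $L^2\big(\Omega^{|B|};\varrho(z_{(B,x,\cdot)})\,dy\big)$, which is exactly the one-block statement underlying Theorem \ref{thmN=1}, cf.\ \cite{GV14b} and \cite[Ch.~II]{MR92}. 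The form $\mathcal{E}_B^{\Gamma}$ is treated symmetrically, disintegrating over the interior variables and using (H2) together with the closability of weighted surface-gradient forms obtained by the methods of \cite{GV14b}.

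It remains to pass from this fiberwise closability to closability of the integrated form, and this is where the actual work lies. Let $(f_n)\subset C^1(\Lambda)$ be $\mathcal{E}_B^{\Omega}$-Cauchy with $f_n\to 0$ in $L^2(\Lambda;\mu)$; since $\mathcal{E}_B^{\Omega}(f_n,f_n)$ then converges, it suffices to exhibit a subsequence along which it tends to $0$. Choosing a subsequence $(f_{n_k})$ with $\|f_{n_k}(\cdot,x)\|_{L^2(\Omega^{|B|};\varrho(z_{(B,x,\cdot)})dy)}\to 0$ for a.e.\ $x$ and $\sum_k\mathcal{E}_B^{\Omega}(f_{n_{k+1}}-f_{n_k},f_{n_{k+1}}-f_{n_k})<\infty$, one gets that for $\prod_{i\in I\setminus B}\sigma_i$-a.e.\ $x$ the fiber sequence $(f_{n_k}(\cdot,x))$ is both an $L^2$-null sequence and Cauchy for the fiber form, so fiberwise closability forces the fiber energies to tend to $0$; the triangle inequality bounds these fiber energies by a fixed $L^1(\prod_{i\in I\setminus B}\sigma_i)$-function, whence dominated convergence yields $\mathcal{E}_B^{\Omega}(f_{n_k},f_{n_k})\to 0$. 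The same scheme handles $\mathcal{E}_B^{\Gamma}$, and assembling the two reductions of the first paragraph finishes the proof. The main obstacle is purely technical: one must keep track of the measurability of the fibered weights $\varrho(z_{(B,x,\cdot)})$ and of the regular sets $R_{\varrho}^{\Omega}(B,x)$ (defined only up to $\prod_{i\in I\setminus B}\sigma_i$-null sets, as already noted in the text) so that the Fubini and dominated-convergence steps are fully rigorous.
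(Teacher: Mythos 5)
Your proposal is correct and follows essentially the same route as the paper: both reduce closability to the fiberwise Hamza conditions (H1)/(H2) by disintegrating $\mu_B$ over the complementary variables and then pass back to the integrated form via a subsequence argument (the paper identifies the $L^2(\Lambda_B;\mu_B)$-limits $h_j$ of the partial derivatives, shows they vanish fiberwise, and concludes with Fatou, whereas you invoke fiberwise closability as a black box and finish with dominated convergence). The only point to tighten is the subsequence choice in your final step: summability of $\mathcal{E}_B^{\Omega}(f_{n_{k+1}}-f_{n_k},f_{n_{k+1}}-f_{n_k})$ alone does not make the fibers form-Cauchy, so you should take, say, $\mathcal{E}_B^{\Omega}(f_{n_{k+1}}-f_{n_k},f_{n_{k+1}}-f_{n_k})\leq 4^{-k}$, which makes the square roots of the fiber increment energies summable almost everywhere and simultaneously yields the $L^1$ dominating function.
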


\begin{proof}
Let $(f_k)_{k \in \mathbb{N}}$ be an $\mathcal{E}$-Cauchy sequence in $\mathcal{D}$ such that $f_k \rightarrow 0$ in $L^2(\Lambda;\mu)$ as $k \rightarrow \infty$. In particular, $(f_k)_{k \in \mathbb{N}}$ is $\mathcal{E}_B$-Cauchy and converges to $0$ in $L^2(\Lambda_B;\mu_B)$ for every $\emptyset \neq B \subset I$. Thus, by definition of $\mathcal{E}_B$ we have that $(\partial_j f_k)_{k \in \mathbb{N}}$ is Cauchy in $L^2(\Lambda_B; \mu_B)$ for every $j=d(i-1)+l$, where $i \in B$ and $l \in \{1,\dots,d\}$ and hence, $\partial_j f_k \rightarrow h_j \in L^2(\Lambda_B;\mu_B)$ as $k \rightarrow \infty$. In other words,
\begin{align} \label{integral1} \int_{\Gamma^{N-|B|}} \int_{\Omega^{|B|}} (\partial_j f_k -h_j)^2 \varrho \prod_{i \in B} \lambda_i \prod_{i \in I \backslash B} \sigma_i \rightarrow 0 \quad \text{as } k \rightarrow \infty.
\end{align}
Therefore, it exists a subsequence $(\partial_j f_{k_l} )_{l \in \mathbb{N}}$ such that $\partial_j f_{k_l} \rightarrow h_j$ as $l \rightarrow \infty$ in $L^2(\Omega^{|B|}; \varrho \prod_{i \in B} \lambda_i)$ $\prod_{i \in I \backslash B} \sigma_i$-a.e. and similarly, $f_{k_l} \rightarrow 0$ as $l \rightarrow \infty$ in $L^2(\Omega^{|B|}; \varrho \prod_{i \in B} \lambda_i)$ $\prod_{i \in I \backslash B} \sigma_i$-a.e.. This implies that $h_j=0$ on $\Omega^{|B|}$ $\varrho \prod_{i \in B} \lambda_i$-a.e. $\prod_{i \in I \backslash B} \sigma_i$-a.e. by \textit{(H1)} of Condition \ref{condhamza} (see \cite[Chapter II, Section 2a)]{MR92}) and hence, $h_j=0$ $\mu_B$-a.e. on $\Lambda_B$. In the case $\delta=1$, we obtain a similar statement for the components of $\nabla_{\Gamma,i} f_k$, $i \in I \backslash B$, $k \in \mathbb{N}$, by considering the term in analogy to (\ref{integral1}) and integrating first with respect to $\prod_{i \in I \backslash B} \sigma_i$ and afterwards with respect to $\prod_{i \in B} \lambda_i$. By Fatou's lemma holds
\[ \mathcal{E}(f_k,f_k) \leq \liminf_{l \rightarrow \infty} \mathcal{E}(f_k-f_{k_l},f_k-f_{k_l}) \rightarrow 0 \quad \text{as } k \rightarrow \infty. \]

\end{proof}

We denote the closure of $(\mathcal{E},\mathcal{D})$ on $L^2(\Lambda;\mu)$ by $(\mathcal{E},D(\mathcal{E}))$.

\begin{proposition}
Suppose that Condition \ref{condL1} and Condition \ref{condhamza} are satisfied. Then $(\mathcal{E},D(\mathcal{E}))$ is a symmetric, regular Dirichlet form.
\end{proposition}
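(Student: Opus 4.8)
The plan is to verify in turn that $(\mathcal{E},D(\mathcal{E}))$ is densely defined and closed, symmetric and Markovian, and regular, working throughout with the subspace $\mathcal{D}=C^1(\Lambda)$ as a core. Symmetry and non-negativity are immediate from the representation $\mathcal{E}(f,g)=\frac{1}{2}\int_\Lambda\Gamma(f,g)\,d\mu$: the density $\Gamma(f,g)=\sum_{i=1}^N\big(\mathbbm{1}_{\Lambda^{i,\Omega}}(\nabla_i f,\nabla_i g)+\delta\,\mathbbm{1}_{\Lambda^{i,\Gamma}}(\nabla_{\Gamma,i}f,\nabla_{\Gamma,i}g)\big)$ is symmetric in $(f,g)$, satisfies $\Gamma(f,f)\ge 0$, and $\varrho\ge 0$. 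Lemma \ref{lemmaclosable} provides closability, so the closure $(\mathcal{E},D(\mathcal{E}))$ is by construction closed, i.e.\ $D(\mathcal{E})$ is complete for the norm $\|f\|_{\mathcal{E}}:=\big(\mathcal{E}(f,f)+\|f\|_{L^2(\Lambda;\mu)}^2\big)^{1/2}$. Density of $D(\mathcal{E})$ in $L^2(\Lambda;\mu)$ follows from Proposition \ref{propdense}, since $C^\infty(\Lambda)\subset\mathcal{D}\subset D(\mathcal{E})$; one should also record that $\mu$ has full topological support on $\Lambda$, because $\varrho>0$ holds $\prod_{i=1}^N\mu_i$-a.e.\ and $\prod_{i=1}^N\mu_i$ assigns positive mass to every nonempty relatively open subset of $\Lambda$.

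For the Markovian property I would first verify the unit-contraction inequality on the core and then transfer it to the closure. Given $f\in\mathcal{D}$, take a $C^1$-approximation $\varphi_\varepsilon$ of the unit contraction $t\mapsto(0\vee t)\wedge 1$ with $0\le\varphi_\varepsilon'\le 1$ and $\varphi_\varepsilon\to(0\vee\cdot)\wedge 1$ pointwise as $\varepsilon\downarrow 0$. Then $\varphi_\varepsilon\circ f\in C^1(\Lambda)=\mathcal{D}$, and the chain rule --- valid for $\nabla_i$ on $\Lambda^{i,\Omega}$ and equally for $\nabla_{\Gamma,i}=P\nabla_i$ on $\Lambda^{i,\Gamma}$, as $P$ acts linearly --- gives $\Gamma(\varphi_\varepsilon\circ f,\varphi_\varepsilon\circ f)=(\varphi_\varepsilon'\circ f)^2\,\Gamma(f,f)\le\Gamma(f,f)$, hence $\mathcal{E}(\varphi_\varepsilon\circ f,\varphi_\varepsilon\circ f)\le\mathcal{E}(f,f)$. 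Letting $\varepsilon\downarrow 0$, $\varphi_\varepsilon\circ f\to(0\vee f)\wedge 1$ in $L^2(\Lambda;\mu)$, and the uniform energy bound together with the usual Banach--Saks / weak-compactness argument (as in \cite[Chapter I, Section 2]{MR92}) yields $(0\vee f)\wedge 1\in D(\mathcal{E})$ with $\mathcal{E}\big((0\vee f)\wedge 1,(0\vee f)\wedge 1\big)\le\mathcal{E}(f,f)$. Running the same argument on a $\|\cdot\|_{\mathcal{E}}$-approximating sequence $(f_k)_k\subset\mathcal{D}$ of an arbitrary $u\in D(\mathcal{E})$ extends the inequality to all of $D(\mathcal{E})$, so $(\mathcal{E},D(\mathcal{E}))$ is a symmetric Dirichlet form.

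For regularity it remains to note that $\mathcal{D}$ is a core, i.e.\ a subset of $D(\mathcal{E})\cap C(\Lambda)$ that is $\|\cdot\|_{\mathcal{E}}$-dense in $D(\mathcal{E})$ and $\|\cdot\|_\infty$-dense in $C(\Lambda)=C_0(\Lambda)$, the identification being valid because $\Lambda=\overline{\Omega}^N$ is compact. The first density is immediate from the definition of the closure; the second is the Stone--Weierstrass theorem, since the polynomials on the compact set $\Lambda\subset\mathbb{R}^{Nd}$, a fortiori $C^1(\Lambda)$, are uniformly dense in $C(\Lambda)$. Therefore $(\mathcal{E},D(\mathcal{E}))$ is regular.

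The only step that is not essentially a routine verification is the passage of the unit-contraction inequality from the core to all of $D(\mathcal{E})$, which forces the weak-compactness (Banach--Saks) argument rather than a direct computation; the remaining ingredients are the chain rule, Stone--Weierstrass, and the closability already established in Lemma \ref{lemmaclosable}. Alternatively one could avoid this step altogether by invoking the closability-plus-contraction criterion for gradient-type Dirichlet forms used for $N=1$ in \cite{FGV14} and \cite{GV14b}, since here $\Gamma(f,g)$ is simply the $N$-fold sum of the corresponding one-particle energy densities.
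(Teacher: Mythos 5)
Your proof is correct and follows essentially the same route as the paper: symmetry and dense definedness from Proposition \ref{propdense}, closability from Lemma \ref{lemmaclosable}, the Markov property by verifying the contraction property on the core $C^1(\Lambda)$ via the carr\'e du champ representation (you unpack by hand the chain-rule-plus-Banach--Saks argument that the paper delegates to \cite[Chapter I, Prop. 4.10]{MR92}), and regularity from the fact that $C^1(\Lambda)\subset D(\mathcal{E})\cap C(\Lambda)$ is dense in both required topologies. The only difference is that you spell out the Stone--Weierstrass step for sup-norm density, which the paper leaves implicit.
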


\begin{proof}
By Proposition \ref{propdense} and Lemma \ref{lemmaclosable} $(\mathcal{E},\mathcal{D})$ is symmetric, densely defined and closable with closure $(\mathcal{E},D(\mathcal{E}))$ which is also symmetric. Moreover, by \cite[Chapter I, Prop. 4.10]{MR92} and the representation (\ref{Nform}), $(\mathcal{E},D(\mathcal{E}))$ possesses the Markov property. Finally, $C^{\infty}(\Lambda) \subset C^1(\Lambda) \subset D(\mathcal{E}) \cap C(\Lambda)$ implies that $D(\mathcal{E}) \cap C(\Lambda)$ is dense in $D(\mathcal{E})$ with respect to the $\mathcal{E}_1^{\frac{1}{2}}$-norm as well as in $C(E)$ with respect to the sup-norm. Hence, $(\mathcal{E},D(\mathcal{E}))$ is regular.
\end{proof}

\begin{proposition}
Suppose that Condition \ref{condL1} and Condition \ref{condhamza} are satisfied. Then the symmetric, regular Dirichlet form $(\mathcal{E},D(\mathcal{E}))$ is strongly local and recurrent.
\end{proposition}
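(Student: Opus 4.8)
The plan is to check the two structural properties separately. Recurrence is immediate from the finiteness of $\mu$: by Condition~\ref{condL1} we have $\mu(\Lambda)=\int_\Lambda\varrho\,\prod_{i=1}^N d\mu_i<\infty$, so the constant function $\mathbbm{1}_\Lambda$ lies in $L^2(\Lambda;\mu)$; since $\mathbbm{1}_\Lambda\in C^\infty(\Lambda)\subset\mathcal{D}\subset D(\mathcal{E})$ and $\Gamma(\mathbbm{1}_\Lambda,\mathbbm{1}_\Lambda)\equiv 0$, also $\mathcal{E}(\mathbbm{1}_\Lambda,\mathbbm{1}_\Lambda)=0$, and for a regular Dirichlet form on a space of finite measure this already yields recurrence (and, a fortiori, conservativeness). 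I would dispatch this in a line or two.

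For strong locality I would first reduce to the core. Since $\Lambda$ is compact every function has compact support, so by the standard characterization of strong locality for regular Dirichlet forms (cf.~\cite[Ch.~V]{MR92}) it suffices to show that $\mathcal{E}(u,v)=0$ whenever $u,v\in D(\mathcal{E})$ and $v$ agrees $\mu$-a.e.\ with a constant $c$ on an open neighbourhood $U$ of $\supp(u)$. For $u,v\in\mathcal{D}=C^1(\Lambda)$ this is read off pointwise from~(\ref{formsquare}): at any $x\in\Lambda$ one has either $x\in U$, so that $\nabla_i v(x)=0$ and $\nabla_{\Gamma,i}v(x)=0$ for all $i$, or $x\notin\supp(u)$, so that $\nabla_i u(x)=0$ and $\nabla_{\Gamma,i}u(x)=0$; in either case $\Gamma(u,v)(x)=0$, whence $\mathcal{E}(u,v)=\tfrac12\int_\Lambda\Gamma(u,v)\,d\mu=0$.

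To pass from $\mathcal{D}$ to $D(\mathcal{E})$ I would use a cut-off and approximation argument: given $u,v\in D(\mathcal{E})$ with $v\equiv c$ $\mu$-a.e.\ on $U\supset\supp(u)$, one interposes open sets $\supp(u)\subset V_1\subset\overline{V_1}\subset V_2\subset\overline{V_2}\subset U$ and smooth cut-offs $\chi,\psi$ with $\chi\equiv 1$ on $V_1$, $\supp\chi\subset V_2$, and $\psi\equiv 0$ on an open neighbourhood $W\subset U$ of $\overline{V_2}$, $\psi\equiv 1$ off $U$. Then $\chi u=u$ and, since $v-c$ vanishes on $U$, also $\psi(v-c)=v-c$ ($\mu$-a.e.). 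Picking $u_k,w_m\in C^1(\Lambda)$ with $u_k\to u$ and $w_m\to v-c$ in $\mathcal{E}_1^{\frac12}$-norm and, after a smooth truncation, uniformly bounded, the Leibniz rule for $\Gamma$ gives $\chi u_k\to u$ and $\psi w_m\to v-c$ in $\mathcal{E}_1^{\frac12}$-norm, with $\supp(\chi u_k)\subset\overline{V_2}$ and $\psi w_m\equiv 0$ on $W\supset\supp(\chi u_k)$. The core case applies to the pair $(\chi u_k,\psi w_m)$, so $\mathcal{E}(\chi u_k,\psi w_m)=0$; letting $m\to\infty$ and then $k\to\infty$ yields $\mathcal{E}(u,v-c)=0$, and since $\mathbbm{1}_\Lambda\in D(\mathcal{E})$ with $\mathcal{E}(\,\cdot\,,\mathbbm{1}_\Lambda)=0$ (by density from the core) we obtain $\mathcal{E}(u,v)=\mathcal{E}(u,v-c)=0$.

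The main obstacle is the bookkeeping in this last step: choosing the cut-offs so that the support and constancy conditions survive the $C^1$-approximation, and verifying that multiplication by a bounded $C^1$ function with bounded gradient is an $\mathcal{E}_1$-continuous operation on $D(\mathcal{E})$ (together with the harmless reduction to bounded functions via a smooth truncation, which in turn rests on normal contractions operating continuously on $D(\mathcal{E})$). All of this is standard for Dirichlet forms of gradient type and can be carried out exactly as in the $N=1$ case treated in \cite{GV14b}.
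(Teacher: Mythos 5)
Your proposal is correct and follows essentially the same route as the paper: recurrence via $\mathbbm{1}_\Lambda\in\mathcal{D}\subset D(\mathcal{E})$ with $\mathcal{E}(\mathbbm{1}_\Lambda,\mathbbm{1}_\Lambda)=0$, and strong locality by checking that $\Gamma(f,g)$ vanishes pointwise for core elements $f,g\in C^1(\Lambda)$ with $g$ constant near $\supp(f)$. The only difference is that the paper dispatches the reduction from $D(\mathcal{E})$ to the core by citing \cite[Theorem 3.1.1 and Exercise 3.1.1]{FOT94}, whereas you reprove that reduction by hand with cut-offs and approximation; your version is more laborious but amounts to the same standard argument.
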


\begin{proof}
Using \cite[Theo. 3.1.1]{FOT94} and \cite[Exercise 3.1.1]{FOT94} it is sufficient to show the strong local property for elements in $\mathcal{D}$. Therefore, let $f,g \in \mathcal{D}$ such that $g$ is constant on some open neighborhood $U$ of $\text{supp}(f)$ (in the trace topology of $\Lambda$). Then it follows immediately by (\ref{Nform}) that $\mathcal{E}(f,g)=0$. Hence, $(\mathcal{E},D(\mathcal{E}))$ is strongly local. Clearly, $\mathbbm{1}_{\Lambda} \in \mathcal{D} \subset D(\mathcal{E})$ and $\mathcal{E}(\mathbbm{1}_{\Lambda},\mathbbm{1}_{\Lambda})=0$. Thus, $(\mathcal{E},D(\mathcal{E}))$ is also recurrent.
\end{proof}

We summarize the preceding results in the following theorem:

\begin{theorem}
Assume that Condition \ref{condL1} and Condition \ref{condhamza} are fulfilled. Then the symmetric and positive definite bilinear form $(\mathcal{E},\mathcal{D})$ is densely defined and closable on $L^2(\Lambda;\mu)$. Its closure $(\mathcal{E},D(\mathcal{E}))$ is a recurrent, strongly local, regular, symmetric Dirichlet form on $L^2(\Lambda;\mu)$.
\end{theorem}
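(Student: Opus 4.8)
The final statement collects the facts proved in the preceding results, so the plan is essentially to assemble them while additionally recording the two points that were not yet isolated as separate claims: symmetry together with positive definiteness, and the Markov (Dirichlet) property. Symmetry and positive definiteness I would read off directly from the carr\'e-du-champ representation (\ref{Nform}): for $f,g\in\mathcal{D}$ the integrand $\Gamma(f,g)$ is a finite sum of terms $\mathbbm{1}_{\Lambda^{i,\Omega}}(\nabla_i f,\nabla_i g)$ and $\delta\,\mathbbm{1}_{\Lambda^{i,\Gamma}}(\nabla_{\Gamma,i} f,\nabla_{\Gamma,i} g)$ on the disjoint pieces $\Lambda^{i,\Omega}$, $\Lambda^{i,\Gamma}$, each symmetric in $(f,g)$ and nonnegative for $f=g$; hence $\mathcal{E}(f,f)\ge 0$ and $\mathcal{E}(f,g)=\mathcal{E}(g,f)$. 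Dense definedness is Proposition \ref{propdense} (since $C^\infty(\Lambda)\subset C^1(\Lambda)=\mathcal{D}$), and closability on $L^2(\Lambda;\mu)$ is Lemma \ref{lemmaclosable}; one denotes the closure by $(\mathcal{E},D(\mathcal{E}))$.

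It then remains to verify that $(\mathcal{E},D(\mathcal{E}))$ is a regular, strongly local, recurrent Dirichlet form. For the Markov property I would use the representation (\ref{Nform}) together with \cite[Ch. I, Prop. 4.10]{MR92}: since $\Gamma(\cdot,\cdot)$ is built from first-order derivations acting blockwise in the variables $x^i$, the unit contraction does not increase $\mathcal{E}(f,f)$ on $\mathcal{D}$, and this passes to the closure. Regularity follows from $C^\infty(\Lambda)\subset\mathcal{D}\subset D(\mathcal{E})\cap C(\Lambda)$ together with density of $C^\infty(\Lambda)$ in $D(\mathcal{E})$ in the $\mathcal{E}_1^{1/2}$-norm and, since $\Lambda=\overline{\Omega}^N$ is compact, in $C(\Lambda)$ in the sup-norm (Stone--Weierstrass). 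Strong locality is immediate on $\mathcal{D}$: if $g$ is constant on an open neighborhood of $\text{supp}(f)$ then $\nabla_i g$ and $\nabla_{\Gamma,i}g$ vanish there, so (\ref{Nform}) gives $\mathcal{E}(f,g)=0$; by \cite[Theo. 3.1.1 and Exercise 3.1.1]{FOT94} this extends to $D(\mathcal{E})$. Recurrence follows since $\mathbbm{1}_\Lambda\in\mathcal{D}$ with $\mathcal{E}(\mathbbm{1}_\Lambda,\mathbbm{1}_\Lambda)=0$, so the constants lie in the form domain with zero energy.

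The only genuinely substantial input is the closability (Lemma \ref{lemmaclosable}); the rest is a compilation. The crux there, and the place where Condition \ref{condhamza} enters, is that closability must be checked on each stratum $\Lambda_B$ separately using the splitting $\mathcal{E}=\sum_{\emptyset\neq B\subset I}\mathcal{E}_B$: by Fubini over the factorization $\prod_{i\in B}\lambda_i\otimes\prod_{i\in I\setminus B}\sigma_i$ one reduces to the one-dimensional Hamza argument of \cite[Ch. II, Sect. 2a]{MR92} in the "interior" directions $i\in B$ (via (H1)) and, when $\delta=1$, symmetrically in the "boundary" directions $i\in I\setminus B$ (via (H2)), after which Fatou's lemma reassembles the $\Lambda_B$-estimates into control of $\mathcal{E}(f_k,f_k)$. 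I expect this slice-by-slice bookkeeping over the $2^N$ strata, rather than any single inequality, to be the main obstacle; the summary theorem itself is then obtained by quoting Propositions \ref{propdense} and the three Propositions preceding the statement.
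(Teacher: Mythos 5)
Your proposal is correct and takes essentially the same route as the paper: the theorem is presented there explicitly as a summary of Proposition \ref{propdense}, Lemma \ref{lemmaclosable}, and the two subsequent propositions, whose proofs use exactly the ingredients you list (the stratum-by-stratum Hamza argument with Fubini and Fatou for closability, \cite[Chapter I, Prop. 4.10]{MR92} for the Markov property, the inclusion $C^{\infty}(\Lambda) \subset D(\mathcal{E}) \cap C(\Lambda)$ for regularity, and $\mathbbm{1}_{\Lambda}$ with zero energy for strong locality and recurrence). You correctly identify Lemma \ref{lemmaclosable} as the only substantial input, and your sketch of its proof matches the paper's.
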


By the theory of Dirichlet forms, we obtain immediately the existence of an associated diffusion process. For details see e.g. \cite[Chap. V, Theorem 1.11]{MR92} or \cite[Theorem 7.2.2 and Exercise 4.5.1]{FOT94}. We remark that the definitions of capacities (and hence, of exceptional sets) used in the textbooks \cite{FOT94} and \cite{MR92} are introduced in different ways, but that the defintions coincide in our setting (see \cite[Chap. III, Remark 2.9 and Exercise 2.10]{MR92}). $(T_t)_{t >0}$ denotes the sub-Markovian strongly continuous contraction semigroup on $L^2(\Lambda;\mu)$ corresponding to $(\mathcal{E},D(\mathcal{E}))$.

\begin{theorem} \label{thmdiff}
Suppose that Condition \ref{condL1} and Condition \ref{condhamza} are satisfied. Then there exists a conservative diffusion process (i.e. a strong Markov process with continuous sample paths and infinite life time)
\[ \mathbf{M}:=\big( \mathbf{\Omega}, \mathcal{F}, (\mathcal{F}_t)_{t \geq 0}, (\mathbf{X}_t)_{t \geq 0}, (\Theta_t)_{t \geq 0}, (\mathbf{P}_x)_{x \in \Lambda} \big) \]
with state space $\Lambda$ which is properly associated with $(\mathcal{E},D(\mathcal{E}))$, i.e., for all ($\mu$-versions of) $f \in \mathcal{B}_b(\Lambda) \subset L^2(\Lambda;\mu)$ and all $t >0$ the function
\[ \Lambda \ni x \mapsto p_t f(x):= \mathbb{E}_x \big(f(\mathbf{X}_t) \big) := \int_{\Lambda} f(\mathbf{X}_t) d\mathbf{P}_x \in \mathbb{R} \]
is a quasi continuous version of $T_t f$. $\mathbf{M}$ is up to $\mu$-equivalence unique. In particular, $\mathbf{M}$ is $\mu$-symmetric ($\mu$ is stationary), i.e.,
\[ \int_{\Lambda} p_t f ~ g ~d\mu = \int_{\Lambda} f ~ p_t g ~d\mu \ \text{ for all } f,g \in \mathcal{B}_b(\Lambda) \ \text{ and all } t >0, \]
and has $\mu$ as invariant measure ($\mu$ is reversible), i.e.,
\[ \int_{\Lambda} p_t f~ d\mu = \int_{\Lambda} f~ d\mu \ \text{ for all } f \in \mathcal{B}_b(\Lambda) \ \text{ and all } t >0. \]
\end{theorem}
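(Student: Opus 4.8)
The plan is to read the statement off the general theory of regular symmetric Dirichlet forms, since the preceding theorem has already done all the work: $(\mathcal{E},D(\mathcal{E}))$ is a recurrent, strongly local, regular, symmetric Dirichlet form on $L^2(\Lambda;\mu)$, and $\Lambda=\overline{\Omega}^N$ is a compact (in particular locally compact, separable) metric space while $\mu$ is a positive Radon measure of full support on $\Lambda$ by Condition \ref{condL1}. First I would invoke the existence theorem for Hunt processes properly associated with regular symmetric Dirichlet forms, e.g. \cite[Chap. IV, Theorem 3.5 and Chap. V, Theorem 1.5]{MR92} (equivalently \cite[Theorem 7.2.1]{FOT94}): regularity yields a $\mu$-symmetric Hunt process $\mathbf{M}=(\mathbf{\Omega},\mathcal{F},(\mathcal{F}_t),(\mathbf{X}_t),(\Theta_t),(\mathbf{P}_x)_{x\in\Lambda})$ with state space $\Lambda$ such that for every bounded Borel $f$ and every $t>0$ the function $x\mapsto p_tf(x)=\mathbb{E}_x(f(\mathbf{X}_t))$ is a quasi-continuous $\mu$-version of $T_tf$, and $\mathbf{M}$ is unique up to $\mu$-equivalence. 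Here I would recall the remark made just before the theorem that the capacities of \cite{FOT94} and \cite{MR92} coincide in this setting, so that the notions of exceptional and properly exceptional sets, and of quasi continuity, are unambiguous.

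Next I would upgrade $\mathbf{M}$ to a diffusion and make it conservative. Strong locality of $(\mathcal{E},D(\mathcal{E}))$ implies, by \cite[Theorem 7.2.2]{FOT94} (together with \cite[Chap. V, Theorem 1.11]{MR92}), that $\mathbf{M}$ can be taken with continuous sample paths on $[0,\zeta)$, i.e. it is a diffusion. Recurrence of the form then gives conservativeness, $\zeta=\infty$ $\mathbf{P}_x$-a.s. for quasi every $x$, via \cite[Lemma 1.6.5]{FOT94}; modifying $\mathbf{M}$ on a suitable properly exceptional set (which is empty here after enlarging, or can simply be absorbed since $\Lambda$ is compact) we may assume infinite lifetime for every $x\in\Lambda$, keeping the state space equal to all of $\Lambda$. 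Thus $\mathbf{M}$ is a conservative diffusion with state space $\Lambda$, properly associated with $(\mathcal{E},D(\mathcal{E}))$.

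Finally I would transport the two displayed identities from the semigroup to $\mathbf{M}$. The $\mu$-symmetry $\int_\Lambda p_tf\,g\,d\mu=\int_\Lambda f\,p_tg\,d\mu$ for bounded Borel $f,g$ is just the self-adjointness of $T_t$ on $L^2(\Lambda;\mu)$ read through the proper association ($p_tf$ being a $\mu$-version of $T_tf$). For the invariance of $\mu$, note $\mathbbm{1}_\Lambda\in\mathcal{D}\subset D(\mathcal{E})$ with $\mathcal{E}(\mathbbm{1}_\Lambda,\mathbbm{1}_\Lambda)=0$, hence $T_t\mathbbm{1}_\Lambda=\mathbbm{1}_\Lambda$, so by the symmetry just established $\int_\Lambda p_tf\,d\mu=\int_\Lambda p_tf\cdot\mathbbm{1}_\Lambda\,d\mu=\int_\Lambda f\cdot p_t\mathbbm{1}_\Lambda\,d\mu=\int_\Lambda f\,d\mu$. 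I expect no genuine analytic obstacle here: all the content sits in the cited Dirichlet-form theorems, and the only points deserving a sentence of care are the coincidence of the two capacity notions and the passage from "for quasi every $x$" to "for every $x\in\Lambda$" in the conservativeness statement, which is exactly where recurrence (equivalently $\mathbbm{1}_\Lambda\in D(\mathcal{E})$ with vanishing energy) is used.
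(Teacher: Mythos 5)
Your proposal is correct and follows essentially the same route as the paper, which simply invokes the general theory of regular, strongly local, recurrent symmetric Dirichlet forms via \cite[Chap.\ V, Theorem 1.11]{MR92} and \cite[Theorem 7.2.2 and Exercise 4.5.1]{FOT94}, together with the remark that the two notions of capacity coincide here. Your additional spelled-out details (self-adjointness of $T_t$ for the $\mu$-symmetry, and $T_t\mathbbm{1}_\Lambda=\mathbbm{1}_\Lambda$ for invariance) are consistent with what the paper leaves implicit.
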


\begin{remark} \label{remcanonical}
Note that $\mathbf{M}$ is canonical, i.e., $\mathbf{\Omega}=C(\mathbb{R}_{+},\Lambda)$ and $\mathbf{X}_t(\omega)=\omega(t)$, $\omega \in \mathbf{\Omega}$. 
For each $t \geq 0$ we denote by $\Theta_t:\mathbf{\Omega} \rightarrow \mathbf{\Omega}$ the shift operator defined by $\Theta_t(\omega)=\omega(\cdot +t)$ for $\omega \in \mathbf{\Omega}$ such that $\mathbf{X}_s \circ \Theta_t = \mathbf{X}_{s+t}$ for all $s \geq 0$.
We take into account to extend the setting to $C(\mathbb{R}_{+},\mathbb{R}^{Nd})$ by neglecting paths leaving $\Lambda$.
\end{remark}

\subsection{Densities with product structure} \label{secrho}

We introduce a special case of the setting given in Section \ref{secgeneral} which will be of particular importance later on.

\begin{condition} \label{condprod} Assume that $\varrho$ is of the form
\begin{align} \label{density} \varrho(x)=\phi(x) \prod_{i=1}^N \varrho_i (x^i)\quad \text{for } x=(x^1,\dots,x^N) \in \Lambda.
\end{align}
$\varrho_i \in L^1(\overline{\Omega};\lambda + \sigma)$, $i=1,\dots,N$, is given as in (\ref{densityN=1}) for some $\alpha_i \in L^1(\Omega;\lambda)$, $\alpha_i >0$ $\lambda$-a.e. and $\beta_i \in L^1(\Gamma;\sigma)$, $\beta_i >0$ $\sigma$-a.e. such that the respective Hamza conditions are fulfilled (see Section \ref{sec1d}). Moreover, $\phi$ is a $\prod_{i=1}^N (\alpha_i \lambda_i + \beta_i \sigma_i)$-a.e. positive, real valued, measurable function on $\Lambda$ such that $\phi \in L^1(\Lambda; \prod_{i=1}^N (\alpha_i \lambda_i + \beta_i \sigma_i))$. Furthermore, we assume that $\phi$ fulfills Condition \ref{condhamza}.
\end{condition}

\begin{remark}
Note that Condition \ref{condprod} implies Condition \ref{condL1} and Condition \ref{condhamza}.
\end{remark}

Under these conditions it is also possible to consider the form defined in (\ref{Nform}) from a different point of view. Define the form $(\mathcal{E}^i,D(\mathcal{E}^i))$, $i=1,\dots,N$, as the closure of the bilinear form
\begin{align} \label{formdensity}
\frac{1}{2} \int_{\Omega} (\nabla f, \nabla g)~ \alpha_i d\lambda
+ \frac{\delta}{2} \int_{\Gamma} (\nabla_{\Gamma} f,\nabla_{\Gamma} g) ~\beta_i d\sigma \quad \text{for } f,g \in C^1(\overline{\Omega})
\end{align}
on $L^2(\overline{\Omega};\alpha_i \lambda + \beta_i \sigma)$ and set $\mu_i:= \alpha_i \lambda_i + \beta_i \sigma_i$. Then, it is possible to define $(\tilde{\mathcal{E}},\mathcal{D})$ and $(\mathcal{E},\mathcal{D})$ as in (\ref{formsquare}) and (\ref{Nform}) respectively with $\varrho$ replaced by $\phi$. This construction yields the same bilinear form $(\mathcal{E},\mathcal{D})$ on $L^2(\Lambda;\mu)$, where $\mu=\varrho \prod_{i=1}^N (\lambda_i + \sigma_i)$.\\
Roughly speaking, the first definition of $(\mathcal{E},\mathcal{D})$ in Section \ref{secgeneral} corresponds to a Girsanov transformation of $N$ independent sticky Brownian motions on $\overline{\Omega}$ with constant stickyness along $\Gamma$ (each associated to the form $(\mathcal{G},D(\mathcal{G}))$) such that the transformed process has a drift given by $\grad \ln \varrho$. In the present section, the form $(\mathcal{E}^i,D(\mathcal{E}^i))$, $i=1,\dots,N$, describes a \textit{distorted} sticky Brownian motion on $\overline{\Omega}$ with drift $\nabla \ln \alpha_i$ inside $\Omega$ and the stickyness along $\Gamma$ is given by $\frac{\alpha_i}{\beta_i}$ as well as a drift along $\Gamma$ given by $\nabla_{\Gamma} \ln \beta_i$. Then, the Girsanov transformation by $\phi$ yields an additional drift $\nabla \ln \phi$. Note that the resulting form and process (up to equivalence) are the same, since the pre-Dirichlet forms on $C^1(\Lambda)$ coincide.\\
Densities with product structure as presented in the present section have the advantage that we can handle the densities $\varrho_i$, $i=1,\dots,N$, by considering the forms $(\mathcal{E}^i,D(\mathcal{E}^i))$, $i=1,\dots,N$, as given in (\ref{formdensity}). For this type of Dirichlet form it is possible to use the (regularity) results of \cite{GV14b}. In this way the assumptions imposed on $\varrho_i$, $i=1,\dots,N$, are not very restrictive. Only for the interaction part $\phi$ it is necessary to demand stronger requirements.

\section{Analysis of the Markov process} \label{sectanapro} 

\subsection{Generators and boundary conditions}

By Friedrichs representation theorem we have the existence of a unique self-adjoint generator $(L,D(L))$ corresponding to $(\mathcal{E},D(\mathcal{E}))$.

\begin{proposition}
Suppose that Condition \ref{condL1} and Condition \ref{condhamza} are satisfied. Then there exists a unique, self-adjoint, linear operator $(L,D(L))$ on $L^2(\Lambda;\mu)$ such that 
\[ D(L) \subset D(\mathcal{E}) \ \text{ and } ~\mathcal{E}(f,g)=(-Lf,g)_{L^2(\Lambda;\mu)} \text{ for all } f \in D(L),~ g \in D(\mathcal{E}).\] 
\end{proposition}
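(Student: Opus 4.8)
The plan is to invoke the representation theorem for densely defined, closed, symmetric, positive definite bilinear forms on a Hilbert space --- the Friedrichs (Kato) representation theorem --- all of whose hypotheses have already been verified in the preceding sections. Concretely, I would first record that $(\mathcal{E},\mathcal{D})$ is symmetric and positive definite directly from its definition in (\ref{Nform}), densely defined on $L^2(\Lambda;\mu)$ by Proposition \ref{propdense}, and closable under Condition \ref{condL1} and Condition \ref{condhamza} by Lemma \ref{lemmaclosable}; hence its closure $(\mathcal{E},D(\mathcal{E}))$ is a densely defined, closed, symmetric, positive definite bilinear form on the Hilbert space $L^2(\Lambda;\mu)$.

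Next I would apply \cite[Chapter I, Theorem 2.8]{MR92}: to such a form there corresponds a unique self-adjoint operator $(L,D(L))$ on $L^2(\Lambda;\mu)$ with domain $D(L)=\{ f \in D(\mathcal{E}) : \exists\, h \in L^2(\Lambda;\mu) \text{ with } \mathcal{E}(f,g)=(h,g)_{L^2(\Lambda;\mu)} \ \forall\, g \in D(\mathcal{E}) \}$ and $-Lf:=h$ for $f$ in this domain. By construction $D(L) \subset D(\mathcal{E})$ and $\mathcal{E}(f,g)=(-Lf,g)_{L^2(\Lambda;\mu)}$ for all $f \in D(L)$, $g \in D(\mathcal{E})$, which is precisely the asserted identity. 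Uniqueness is immediate since any operator fulfilling this identity must coincide with $L$ on the domain above by density of $D(\mathcal{E})$ in $L^2(\Lambda;\mu)$, and self-adjointness (not merely symmetry) follows from the symmetry of $\mathcal{E}$ together with its closedness. Equivalently, one may identify $-L$ with the infinitesimal generator of the strongly continuous contraction semigroup $(T_t)_{t>0}$ associated with $(\mathcal{E},D(\mathcal{E}))$, which is self-adjoint exactly because the Dirichlet form is symmetric.

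Since the statement is a verbatim application of a textbook theorem, there is no genuine obstacle here: the only task is to match the hypotheses of the representation theorem against the properties of $(\mathcal{E},D(\mathcal{E}))$ established above. I would therefore keep the proof short, essentially a citation of \cite[Chapter I, Theorem 2.8]{MR92} preceded by the remark that all required properties of $(\mathcal{E},D(\mathcal{E}))$ are in force under Condition \ref{condL1} and Condition \ref{condhamza}.
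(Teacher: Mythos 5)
Your proposal is correct and takes essentially the same route as the paper: the paper likewise obtains $(L,D(L))$ by a direct appeal to the Friedrichs representation theorem applied to the closed, densely defined, symmetric, positive definite form $(\mathcal{E},D(\mathcal{E}))$ whose requisite properties were established in Lemma \ref{lemmaclosable} and Proposition \ref{propdense}. Your additional spelling out of the domain $D(L)$ and the uniqueness argument is consistent with, and slightly more detailed than, the paper's one-line citation.
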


In order to determine $(L,D(L))$ for a suitable class of functions we need the following additional condition on $\varrho$ and $\Gamma$:

\begin{condition} \label{conddiff}
Assume that $\Gamma$ is $C^2$-smooth. $\varrho$ is given as in Section \ref{secrho}. Moreover, it holds $\phi \in C^1(\Lambda)$ such that $\nabla \ln \phi \in L^2(\Lambda;\mu)$. $\alpha_i, \beta_i \in C(\overline{\Omega})$, $\sqrt{\alpha_i} \in H^{1,2}(\Omega)$ and if $\delta=1$ $\sqrt{\beta_i} \in H^{1,2}(\Gamma)$ for $i=1,\dots,N$.
\end{condition}

\begin{remark}
Note that if $\alpha_i$, $\beta_i$, $i=1,\dots,N$, and $\phi$ are a.e. positive and the additional conditions of Condition \ref{conddiff} are fulfilled, Condition \ref{condL1} and Condition \ref{condhamza} are implied in view of Remark \ref{remcontHamza}. 
\end{remark}

\begin{proposition} \label{propgen}
Suppose that Condition \ref{conddiff} is satisfied and let $f \in C^2(\Lambda)$. Then
\begin{align}
Lf= \sum_{i=1}^N \big( \mathbbm{1}_{\Lambda^{i,\Omega}} ~(L^{i,\Omega} f + L^{i,\Omega}_{\phi} f) + \mathbbm{1}_{\Lambda^{i,\Gamma}} ~(L^{i,\Gamma} f + L^{i,\Gamma}_{\phi} f) \big),
\end{align}
where $L^{i,\Omega} f$, $L^{i,\Gamma} f$ and $L^{i,\phi} f$ for $i=1,\dots,N$ are given by
\begin{align}
L^{i,\Omega} f &= \frac{1}{2} ~\big(\Delta_i f + (\frac{\nabla_i  \alpha_i}{\alpha_i}, \nabla_i f)\big), \\
L^{i,\Gamma} f &= -\frac{1}{2}~ \frac{\alpha_i}{\beta_i}~ (n,\nabla_i f) + \frac{\delta}{2}~ \big( \Delta_{\Gamma,i} f + (\frac{\nabla_{\Gamma,i} \beta_i}{\beta_i}, \nabla_{\Gamma,i} f) \big), \\
L^{i,\Omega}_{\phi} f   &= \frac{1}{2} ~(\frac{\nabla_i \phi}{\phi}, \nabla_i f),\\
L^{i,\Gamma}_{\phi} f   &= \frac{\delta}{2} ~(\frac{\nabla_{\Gamma,i} \phi}{\phi}, \nabla_{\Gamma,i} f) .
\end{align}
\end{proposition}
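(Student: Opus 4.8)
The plan is to compute $\mathcal{E}(f,g)$ for $f \in C^2(\Lambda)$ and $g \in C^1(\Lambda)$ by integration by parts, and then read off $L f$ from the identity $\mathcal{E}(f,g) = (-Lf, g)_{L^2(\Lambda;\mu)}$. Since $\mathcal{D} = C^1(\Lambda)$ is a core and $C^2(\Lambda)$-functions (by the argument below) turn out to lie in $D(L)$, it suffices to verify the formula against all $g \in \mathcal{D}$. Working from the representation (\ref{Nform}) with $\varrho = \phi \prod_{i=1}^N \varrho_i$, the form splits as a sum over $i = 1,\dots,N$ of an ``$\Omega$-part'' $\frac{1}{2}\int_{\Lambda^{i,\Omega}} (\nabla_i f, \nabla_i g)\, \varrho \prod_j d\mu_j$ and (if $\delta = 1$) a ``$\Gamma$-part'' $\frac{\delta}{2}\int_{\Lambda^{i,\Gamma}} (\nabla_{\Gamma,i} f, \nabla_{\Gamma,i} g)\, \varrho \prod_j d\mu_j$. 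The key point is that on $\Lambda^{i,\Omega}$ the measure factorizes as $\alpha_i \lambda_i$ in the $x^i$-variable times $\phi \prod_{j\neq i}\varrho_j$ in the remaining variables, so the $x^i$-integral is exactly a one-particle sticky Dirichlet form of the type in Section \ref{sec1d} with density $\alpha_i \phi(\cdot\, ; x^{\neq i})$ (and likewise on $\Lambda^{i,\Gamma}$ with $\beta_i$).

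The core computation I would carry out is the following. Fix $i$ and freeze all variables $x^j$, $j \neq i$. On $\overline{\Omega}$ in the variable $x^i$, apply Theorem \ref{thmsolSDE1} / the generator identity from \cite{GV14b}: for the one-particle form with density $\alpha_i$ inside and $\beta_i$ on the boundary, the $L^2$-generator on $C^2(\overline{\Omega})$ is $\mathbbm{1}_\Omega \frac{1}{2}(\Delta + (\nabla\alpha_i/\alpha_i, \nabla\cdot)) + \mathbbm{1}_\Gamma(-\frac{1}{2}\frac{\alpha_i}{\beta_i}(n,\nabla\cdot) + \frac{\delta}{2}(\Delta_\Gamma + (\nabla_\Gamma\beta_i/\beta_i,\nabla_\Gamma\cdot)))$, which is precisely $L^{i,\Omega} + L^{i,\Gamma}$. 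The extra factor $\phi$ produces, via the product rule $\nabla_i(\phi\, \text{stuff}) = \phi\nabla_i(\cdot) + (\nabla_i\phi)(\cdot)$ in the integration by parts, exactly the additional drift terms $L^{i,\Omega}_\phi = \frac{1}{2}(\nabla_i\phi/\phi, \nabla_i\cdot)$ and $L^{i,\Gamma}_\phi = \frac{\delta}{2}(\nabla_{\Gamma,i}\phi/\phi,\nabla_{\Gamma,i}\cdot)$; this is the standard Girsanov/ground-state-transformation computation, and the hypothesis $\phi \in C^1(\Lambda)$ with $\nabla\ln\phi \in L^2(\Lambda;\mu)$ is exactly what is needed to make the resulting expression an element of $L^2(\Lambda;\mu)$. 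Summing the contributions over $i$ and using $\Lambda = \Lambda^{i,\Omega} \,\dot\cup\, \Lambda^{i,\Gamma}$ gives the claimed formula for $Lf$. One then checks $Lf \in L^2(\Lambda;\mu)$ (using $\nabla\ln\phi \in L^2$, the Sobolev assumptions $\sqrt{\alpha_i}, \sqrt{\beta_i} \in H^{1,2}$ which control $\nabla\alpha_i/\alpha_i$ etc. in the appropriate weighted $L^2$, and boundedness of curvature since $\Gamma$ is $C^2$), so that $f \in D(L)$ and the identification is legitimate.

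The main technical obstacle is the rigorous boundary integration by parts on $\Lambda^{i,\Omega}$: differentiating in $x^i$ on $\Omega$ against a weight $\alpha_i\phi$ which need not be $H^{1,2}$ by itself (only $\sqrt{\alpha_i} \in H^{1,2}$ is assumed, and $\alpha_i$ may vanish on a null set) forces one to use an approximation/localization argument — approximate $\alpha_i$ by functions bounded away from $0$, use the divergence theorem on $\Omega$ to transfer derivatives from $f$ to $g$ and pick up the surface term $-\frac{1}{2}\frac{\alpha_i}{\beta_i}(n,\nabla_i f)$ against $\beta_i \sigma_i$, then pass to the limit. This is exactly the step done in \cite{GV14b} for $N = 1$, and I would invoke that result in the frozen $x^i$-variable rather than redo it; the new content here is merely Fubini to integrate the frozen identity over the remaining variables with respect to $\prod_{j\neq i}\mu_j$, the product rule to bring in $\phi$, and checking the integrability/$L^2$-membership of the final drift terms. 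A secondary point worth a line is that $C^2(\Lambda)$ is dense enough and that the pointwise identity $\mathcal{E}(f,g) = (-Lf,g)$ for all $g \in C^1(\Lambda)$ indeed suffices to conclude $f \in D(L)$ with the stated $Lf$, which follows since $C^1(\Lambda)$ is $\mathcal{E}_1$-dense in $D(\mathcal{E})$.
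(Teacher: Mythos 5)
Your proposal is correct and follows essentially the same route as the paper: Fubini to isolate the $x^i$-integral, integration by parts in the frozen variable $x^i$ (producing the bulk term, the normal-derivative boundary term with weight $\alpha_i/\beta_i$, and the surface Laplace--Beltrami term), the product structure $\varrho=\phi\prod_j\varrho_j$ to split $\nabla_i\varrho/\varrho$ into $\nabla_i\alpha_i/\alpha_i+\nabla_i\phi/\phi$ (and analogously on $\Gamma$), and finally density of $C^1(\Lambda)$ in $D(\mathcal{E})$ to identify $Lf$. The only cosmetic difference is that you package the frozen one-particle computation as an invocation of the $N=1$ result of \cite{GV14b} and are more explicit about the approximation and $L^2$-membership issues, which the paper's proof performs directly and more tersely.
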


\begin{proof}
Let $f \in C^2(\Lambda)$ and $g \in \mathcal{D}=C^1(\Lambda)$. By integration by parts and (\ref{density}) follows
\begin{align*}
&\mathcal{E}(f,g)= \frac{1}{2} \int_{\Lambda} \sum_{i=1}^N \big( \mathbbm{1}_{\Lambda^{i,\Omega}} (\nabla_i f,\nabla_i g)+ \delta ~\mathbbm{1}_{\Lambda^{i,\Gamma}} (\nabla_{\Gamma,i} f, \nabla_{\Gamma,i} g) \big) ~\varrho \prod_{j=1}^N (d\lambda_j + d\sigma_j) \\
&=\frac{1}{2} \sum_{i=1}^N \int_{\Lambda^{i,\Omega}} (\nabla_i f,\nabla_i g) ~\varrho \prod_{j=1}^N (d\lambda_j + d\sigma_j) + \frac{\delta}{2} \sum_{i=1}^N \int_{\Lambda^{i,\Gamma}} (\nabla_{\Gamma,i} f,\nabla_{\Gamma,i} g) ~\varrho \prod_{j=1}^N (d\lambda_j + d\sigma_j)\\
&=\frac{1}{2} \sum_{i=1}^N \int_{\overline{\Omega}^{N-1}} \Big( \int_{\Omega} (\nabla_i f,\nabla_i g) ~\varrho~ d\lambda_i \Big) \prod_{j\neq i} (d\lambda_j + d\sigma_j) \\
& \hspace{5cm} +\frac{\delta}{2} \sum_{i=1}^N \int_{\overline{\Omega}^{N-1}} \Big( \int_{\Gamma} (\nabla_{\Gamma,i} f,\nabla_{\Gamma,i} g) ~\varrho~ d\sigma_i \Big) \prod_{j\neq i} (d\lambda_j + d\sigma_j) \\
&=\frac{1}{2} \sum_{i=1}^N \int_{\overline{\Omega}^{N-1}} \Big( - \int_{\Omega} \big( \Delta_i f + (\frac{\nabla_i \varrho}{\varrho}, \nabla_i f) \big) g ~\varrho d\lambda_i + \int_{\Gamma} \frac{\alpha_i}{\beta_i}~ (n, \nabla_i f) g ~\varrho d\sigma_i \Big) \prod_{j\neq i} (d\lambda_j + d\sigma_j)\\
& \hspace{5cm} +\frac{\delta}{2} \sum_{i=1}^N \int_{\overline{\Omega}^{N-1}} \Big(- \int_{\Gamma} (\Delta_{\Gamma,i} f + (\frac{\nabla_{\Gamma,i} \varrho}{\varrho}, \nabla_{\Gamma,i} f) ~\varrho~ d\sigma_i \Big) \prod_{j\neq i} (d\lambda_j + d\sigma_j).
\end{align*}
Note that for $x=(x^1,\dots,x^N) \in \Lambda$ with $x^i \in \Omega$ holds $\frac{\nabla_i \varrho}{\varrho} = \frac{\nabla_i  \alpha_i}{\alpha_i} + \frac{\nabla_i  \phi}{\phi}$ and similarly, if $\delta=1$ and $x^i \in \Gamma$ holds $\frac{\nabla_{\Gamma,i} \varrho}{\varrho}= \frac{\nabla_{\Gamma,i} \beta_i}{\beta_i} + \frac{\nabla_{\Gamma,i}  \phi}{\phi}$ due to (\ref{density}). Hence, 
\[ \mathcal{E}(f,g)= \int_{\Lambda} - \sum_{i=1}^N \big( \mathbbm{1}_{\Lambda^{i,\Omega}} ~(L^{i,\Omega} f + L^{i,\Omega}_{\phi} f) + \mathbbm{1}_{\Lambda^{i,\Gamma}} ~(L^{i,\Gamma} f + L^{i,\Gamma}_{\phi} f) \big) ~g~ d\mu = \int_{\Lambda} -Lf~g~d\mu \]
and therefore, the assertion holds true, since $\mathcal{D}$ is dense in $D(\mathcal{E})$.
\end{proof}

\begin{remark}
In contrast to the case of reflection or absorption, in the present case it is not necessary to require a boundary condition in order to determine the generator, since the reference measure contains the involved surface measures. Nevertheless, it is possible to derive a suitable condition such that the boundary terms vanish. More precisely, if we assume for simplicity that $\delta=0$ and that $f \in C^2(\Lambda)$ is given such that
\begin{align} \label{wentzell} \Delta_i f + (\frac{\nabla_i  \varrho}{\varrho}, \nabla_i f) + \frac{\alpha_i}{\beta_i} (n,\nabla f)=0 \quad \text{on } \Lambda^{i,\Gamma}, ~i=1,\dots,N, 
\end{align}
it holds 
\[ \mathcal{E}(f,g)=-\frac{1}{2} \sum_{i=1}^N \int_{\Lambda} (\Delta_i f + (\frac{\nabla_i \varrho}{\varrho}, \nabla_i f) g d\mu= - \frac{1}{2} \int_{\Lambda} (\Delta f + (\frac{\nabla \varrho}{\varrho}, \nabla f)) g d\mu, \]
i.e., the generator is of the well-known form. The condition (\ref{wentzell}) is called Wentzell boundary condition. Note that the drift in normal direction increases if the factor $\frac{\alpha_i}{\beta_i}$ increases. Hence, it is justifiable to say that the boundary is less sticky for the $i$-th particle at a point $x \in \Gamma$ if $\beta_i(x)$ decreases. This property can also be discovered in a similar way by \cite[Corollary 4.17]{GV14b}, since as a consequence of this ergodicity theorem the particle spends less time on the boundary if $\int_{\Gamma} \beta_i d\sigma$ decreases (compare also to \cite[Corollary 5.7]{FGV14}). Moreover, if we rewrite (\ref{wentzell}) in the form
\begin{align*} \beta_i \Delta_i f +  \beta_i (\frac{\nabla_i \varrho}{\varrho}, \nabla_i f) + \alpha_i (n,\nabla f)=0 \quad \text{on } \Lambda^{i,\Gamma}, 
\end{align*}
and set $\beta_i=0$ we obtain the Neumann boundary condition. 
\end{remark}

Define for $i=1,\dots,N$
\[ A_i :=  \mathbbm{1}_{\Lambda^{i,\Omega}} E + \delta~ \mathbbm{1}_{\Lambda^{i,\Gamma}} P \]
as well as
\begin{align*} 
b_i&:=\frac{1}{2} \Big( \mathbbm{1}_{\Lambda^{i,\Omega}} ~\big( \frac{\nabla_i \alpha_i}{\alpha_i} + \frac{\nabla_i \phi}{\phi} \big) + \mathbbm{1}_{\Lambda^{i,\Gamma}}~ \big( -\frac{\alpha_i}{\beta_i}~n + \delta ~ \frac{\nabla_{\Gamma,i} \beta_i}{\beta_i} + \delta ~\frac{\nabla_{\Gamma,i} \phi}{\phi} \big) \Big),
\end{align*}
where $E$ denotes the $d \times d$ identity matrix. Then, set
\begin{align} \label{Ab} A:=
\begin{pmatrix}
A_1	& 0	& \dots	 & 0      \\
0	& A_2 	& \dots  & 0 	  \\
\vdots	& 0 	& \ddots & \vdots \\
0 	& \dots & 0	 & A_N
\end{pmatrix} \quad
\text{and} \quad b:=
\begin{pmatrix}
b_1  \\
\vdots \\
b_n
\end{pmatrix}
\end{align}
Using this notation, we get for $f \in C^2(\Lambda)$ the representation
\begin{align} \label{repgen} Lf=\frac{1}{2} \text{Tr}(A \nabla^2 f) + (b,\nabla f). 
\end{align}
Note that $AA^t=A^2=A$ and in particular, $P^t=P$.

\subsection{Solution to the martingale problem and SDE} \label{secSDE}

\begin{theorem}
The diffusion process $\mathbf{M}$ from Theorem \ref{thmdiff} is up to $\mu$-equivalence the unique diffusion process having $\mu$ as symmetrizing measure and solving the martingale problem for $(L,D(L))$, i.e., for all $g \in D(L)$
\[ \tilde{g}(\mathbf{X}_t) - \tilde{g}(\mathbf{X}_0) - \int_0^t (Lg)(\mathbf{X}_s) ds, \ t \geq 0, \]
is an $\mathcal{F}_t$-martingale under $\mathbf{P}_x$ for quasi all $x \in \Lambda$. Here $\tilde{g}$ denotes a quasi-continuous version of $g$ (for the definition of quasi-continuity see e.g. \cite[Chap. IV, Proposition 3.3]{MR92}).
\end{theorem}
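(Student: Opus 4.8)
The plan is to invoke the general correspondence between symmetric Dirichlet forms and martingale problems, which applies once we have established that $(\mathcal{E},D(\mathcal{E}))$ is a regular symmetric Dirichlet form (done in Theorem \ref{thmdiff}) with associated diffusion $\mathbf{M}$. First I would recall the standard fact (see e.g.\ \cite[Chap.\ IV, Thm.\ 3.5]{MR92} or \cite[Thm.\ 5.2.3]{FOT94}) that for every $g \in D(L)$ the additive functional
\[ M^{[g]}_t := \tilde{g}(\mathbf{X}_t) - \tilde{g}(\mathbf{X}_0) - \int_0^t (Lg)(\mathbf{X}_s)\,ds, \quad t \geq 0, \]
is a martingale additive functional of finite energy under $\mathbf{P}_x$ for quasi every starting point $x$; this is exactly the assertion that $\mathbf{M}$ solves the martingale problem for $(L,D(L))$. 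The point is that $Lf = -\mathcal{E}(f,\cdot)$ in the weak sense by the definition of the generator, and the Fukushima decomposition together with the fact that $\mathbf{M}$ is properly associated with $(\mathcal{E},D(\mathcal{E}))$ turns this weak identity into the pathwise statement above. Since $\mathbf{M}$ has continuous sample paths (it is a diffusion by Theorem \ref{thmdiff}), $M^{[g]}$ is in fact a continuous martingale.

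Next I would address the uniqueness claim. Here I would use that a $\mu$-symmetric diffusion solving the martingale problem for $(L,D(L))$ has its transition semigroup determined on a core: if $\mathbf{M}'$ is another such process with symmetrizing measure $\mu$, then for $g \in D(L)$ the process $\tilde g(\mathbf{X}'_t) - \tilde g(\mathbf{X}'_0) - \int_0^t Lg(\mathbf{X}'_s)\,ds$ is a martingale, whence $\frac{d}{dt}\mathbb{E}'_x[g(\mathbf{X}'_t)] = \mathbb{E}'_x[Lg(\mathbf{X}'_t)]$, so the $L^2(\Lambda;\mu)$-semigroup of $\mathbf{M}'$ has generator extending $(L,D(L))$; since $(L,D(L))$ is already self-adjoint (hence maximal dissipative), the two semigroups coincide as operators on $L^2(\Lambda;\mu)$, and therefore the finite-dimensional distributions of $\mathbf{M}$ and $\mathbf{M}'$ agree $\mu$-a.e.\ and, by quasi-continuity of the resolvent kernels, for quasi every starting point. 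Thus $\mathbf{M}'$ is $\mu$-equivalent to $\mathbf{M}$. This is the content of \cite[Chap.\ IV, Thm.\ 3.5 and Chap.\ V]{MR92}; I would cite it rather than reprove it.

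The one genuinely non-routine point — and the step I expect to require the most care — is the passage from ``martingale problem for $(L,D(L))$'' in the abstract Dirichlet-form sense to a statement involving $\tilde g$ for \emph{all} $g \in D(L)$ (not merely for $g$ in the core $C^2(\Lambda)$ where Proposition \ref{propgen} gives the explicit form of $Lg$). For general $g \in D(L)$ one only knows $Lg \in L^2(\Lambda;\mu)$, so the time integral $\int_0^t Lg(\mathbf{X}_s)\,ds$ must be interpreted via the Revuz correspondence as the (unique continuous) additive functional with Revuz measure $(Lg)\,\mu$; showing this is well-defined for q.e.\ starting point and that the resulting $M^{[g]}$ is a martingale is precisely where \cite[Thm.\ 5.2.3]{FOT94} (or the corresponding result in \cite{MR92}) is used. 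Since $C^2(\Lambda) \subset D(L)$ need not be an operator core for $(L,D(L))$ in general, I would phrase the theorem exactly as stated — for all $g \in D(L)$ — and rely on the abstract theory, remarking only that on the subclass $C^2(\Lambda)$ the generator $Lg$ is given by the explicit Wentzell-type expression of Proposition \ref{propgen}, which is what makes the martingale problem concrete and links it to the SDE in the next subsection.
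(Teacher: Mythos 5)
Your proposal is correct and follows essentially the same route as the paper, which simply defers to the general theory relating symmetric Dirichlet forms to martingale problems (the paper cites \cite[Theorem 3.4 (i)]{AR95}, while you invoke the equivalent statements in \cite{MR92} and \cite{FOT94} together with the Fukushima decomposition). Your additional remarks on uniqueness via self-adjointness of $(L,D(L))$ and on interpreting $\int_0^t Lg(\mathbf{X}_s)\,ds$ for general $g \in D(L)$ are accurate elaborations of what the cited result already contains.
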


\begin{proof}
See e.g. \cite[Theorem 3.4 (i)]{AR95}.
\end{proof}

By Proposition \ref{propgen} $L$ is explicitly known on the set $C^2(\Lambda)$. Using the representation given in (\ref{repgen}), we obtain the following corollary:

\begin{corollary}
Assume that Condition \ref{conddiff} is fulfilled. Let $g \in C^2(\Lambda)$ and let $\mathbf{M}$ be the diffusion process from Theorem \ref{thmdiff}. Then
\[ g(\mathbf{X}_t) - g(\mathbf{X}_0) - \int_0^t \frac{1}{2} \text{Tr}(A(\mathbf{X}_s) \nabla^2 g(\mathbf{X}_s)) + (b(\mathbf{X}_s), \nabla g (\mathbf{X}_s)) ds, ~ t \geq 0, \]
is an $\mathcal{F}_t$-martingale under $\mathbf{P}_x$ for quasi every $x \in \Lambda$, where $A$ and $b$ are defined in (\ref{Ab}).
\end{corollary}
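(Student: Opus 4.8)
The plan is to obtain the corollary as an immediate specialization of the martingale-problem characterization of $\mathbf{M}$ proved just above, once the generator $L$ has been made explicit on $C^2(\Lambda)$. First I would record that Condition \ref{conddiff} is precisely the standing hypothesis of Proposition \ref{propgen}, and that this proposition does two things simultaneously: it shows $C^2(\Lambda) \subset D(L)$ and it computes $Lg$ for $g \in C^2(\Lambda)$. In particular every $g \in C^2(\Lambda)$ is an admissible test function for the preceding theorem on the martingale problem for $(L,D(L))$.

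Next I would deal with the quasi-continuous version. In the cited theorem the martingale is $\tilde g(\mathbf{X}_t) - \tilde g(\mathbf{X}_0) - \int_0^t (Lg)(\mathbf{X}_s)\,ds$ with $\tilde g$ an $\mathcal{E}$-quasi-continuous version of $g$. Since $g \in C^2(\Lambda) \subset C(\Lambda)$ and $\Lambda$ is compact, $g$ is itself a continuous, hence $\mathcal{E}$-quasi-continuous, $\mu$-version of its own $L^2$-class; by the uniqueness of quasi-continuous versions up to $\mathcal{E}$-exceptional sets we may take $\tilde g = g$, so that $\tilde g(\mathbf{X}_t) = g(\mathbf{X}_t)$ $\mathbf{P}_x$-a.s. for quasi every $x$. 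Consequently, applying the preceding theorem to $g$, the process $g(\mathbf{X}_t) - g(\mathbf{X}_0) - \int_0^t (Lg)(\mathbf{X}_s)\,ds$ is an $\mathcal{F}_t$-martingale under $\mathbf{P}_x$ for quasi every $x \in \Lambda$; the required integrability is built into the Dirichlet-form (Fukushima) framework from which that theorem is drawn, the additive functional $\int_0^t (Lg)(\mathbf{X}_s)\,ds$ being the bounded-variation, zero-energy part of the Fukushima decomposition of $g(\mathbf{X}_t)-g(\mathbf{X}_0)$.

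Finally I would substitute the representation (\ref{repgen}), i.e. $Lg = \frac{1}{2}\,\text{Tr}(A\nabla^2 g) + (b,\nabla g)$ with $A$ and $b$ as in (\ref{Ab}), into the integrand. This turns the martingale above into
\[ g(\mathbf{X}_t) - g(\mathbf{X}_0) - \int_0^t \Big( \frac{1}{2}\,\text{Tr}\big(A(\mathbf{X}_s)\nabla^2 g(\mathbf{X}_s)\big) + \big(b(\mathbf{X}_s), \nabla g(\mathbf{X}_s)\big) \Big)\,ds, \quad t \geq 0, \]
which is exactly the assertion.

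As for the main obstacle: there is essentially no new analytic difficulty at this point. The genuine work — the integration by parts against $\mu=\varrho\prod_i(\lambda_i+\sigma_i)$ that produces the boundary drift terms $-\frac{\alpha_i}{\beta_i}n$ and uses the $C^2$-smoothness of $\Gamma$ together with the $H^{1,2}$/Hamza assumptions on $\alpha_i,\beta_i,\phi$ — was already carried out in Proposition \ref{propgen}, and the passage from generator to martingale problem is supplied by the general Dirichlet-form theory behind Theorem \ref{thmdiff}. The only point needing a moment's care is the bookkeeping: that $C^2(\Lambda)\subset D(L)$ and that $g$ coincides with its quasi-continuous version, so the ``for quasi every $x$'' in the corollary is the same one as in the cited martingale-problem theorem.
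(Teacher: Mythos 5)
Your argument is correct and is exactly the route the paper intends: the corollary is presented as an immediate consequence of the martingale-problem theorem combined with Proposition \ref{propgen} (which shows $C^2(\Lambda)\subset D(L)$ and computes $Lg$) and the representation (\ref{repgen}), with the quasi-continuity of $g$ handled by its continuity on the compact set $\Lambda$. The paper gives no separate proof, and your write-up supplies precisely the bookkeeping it leaves implicit.
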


\begin{lemma}[weak solutions and martingale problems] \label{lemSV}
Fix the probability measure $\mathbf{P}=\mathbf{P}_x$, $x \in \Lambda$, on $C(\mathbb{R}_{+},\mathbb{R}^{Nd})$ (see also Remark \ref{remcanonical}). Let $A$, $b$ be given on $\Lambda$ by (\ref{Ab}). If 
\[ f(\mathbf{X}_t) - f(\mathbf{X}_0) - \int_0^t \frac{1}{2} \text{Tr} \big( A(\mathbf{X}_s) \nabla^2 f (\mathbf{X_s}) \big) + (b(\mathbf{X}_s), \nabla f(\mathbf{X}_s)) \ ds \]
is an $\mathcal{F}_t$-martingale under $\mathbf{P}$ for every $f \in C^{\infty}_c(\mathbb{R}^d)$, the equation
\[ d\mathbf{X}_t = A(\mathbf{X}_t) dB_t + b(\mathbf{X}_t) dt \]
has a weak solution with distribution $\mathbf{P}$, where $(B_t)_{t \geq 0}$ is an $Nd$-dimensional standard Brownian motion.
\end{lemma}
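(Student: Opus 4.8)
The plan is to run the classical Stroock--Varadhan correspondence between martingale problems and weak solutions, the only nonstandard feature being that the diffusion matrix $A$ from (\ref{Ab}) is degenerate. Since the fixed law $\mathbf{P}=\mathbf{P}_x$ is carried by paths that never leave the compact set $\Lambda$ (see Remark \ref{remcanonical}), I would first observe that the hypothesis, although phrased for compactly supported smooth test functions, applies in particular to any $C^\infty_c(\mathbb{R}^{Nd})$-function agreeing on a bounded neighbourhood of $\Lambda$ with a prescribed polynomial; for the value of the resulting process functional on $\mathbf{X}$ this is the same as using the polynomial itself. Writing $a=(i,k)$ with $i\in\{1,\dots,N\}$, $k\in\{1,\dots,d\}$, and $\pi_a(y)=y_a$, testing against (a cutoff of) $\pi_a$ gives $\tfrac{1}{2}\text{Tr}(A\nabla^2\pi_a)+(b,\nabla\pi_a)=b_a$ on $\Lambda$, so that
\[ M^a_t := (\mathbf{X}_t)_a-(\mathbf{X}_0)_a-\int_0^t b_a(\mathbf{X}_s)\,ds \]
is a continuous local martingale under $\mathbf{P}$ (a true martingale if $b$ is bounded, otherwise after the localization described below).

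Next I would compute the quadratic covariation of $M=(M^a)_a$. Since $\nabla^2(\pi_a\pi_b)=e_ae_b^t+e_be_a^t$, one has $L(\pi_a\pi_b)=\pi_aL\pi_b+\pi_bL\pi_a+A_{ab}$ on $\Lambda$; feeding a cutoff of $\pi_a\pi_b$ into the martingale problem, writing $\pi_a(\mathbf{X})$ as the semimartingale $\pi_a(\mathbf{X}_0)+M^a+\int_0^\cdot b_a(\mathbf{X}_s)\,ds$, applying It\^o's product rule and cancelling the terms already accounted for forces $\langle M^a,M^b\rangle_t=\int_0^t A_{ab}(\mathbf{X}_s)\,ds$, a continuous process of finite variation starting at $0$, hence identically that integral. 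Now the structural remark recorded after (\ref{repgen}) enters decisively: $A$ is symmetric and idempotent, $A=A^t=A^2$, so $A$ is positive semidefinite and $AA^t=A$, i.e.\ $A$ is literally its own dispersion coefficient and no matrix square root need be extracted. Invoking the representation theorem for continuous local martingales with prescribed quadratic covariation $\int_0^\cdot A(\mathbf{X}_s)A(\mathbf{X}_s)^t\,ds$ (e.g.\ \cite[Chap.\ II, Theorem 7.1$'$]{IW89}), realized on a possibly enlarged stochastic basis, yields an $Nd$-dimensional standard Brownian motion $(B_t)_{t\ge0}$ with $M_t=\int_0^t A(\mathbf{X}_s)\,dB_s$. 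Combining with the first step gives $\mathbf{X}_t=\mathbf{X}_0+\int_0^t A(\mathbf{X}_s)\,dB_s+\int_0^t b(\mathbf{X}_s)\,ds$, i.e.\ $(\mathbf{X},B)$ is a weak solution with distribution $\mathbf{P}$.

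The two points that need genuine care are the degeneracy of $A$ and the integrability of $b$, and this is where I expect the only real work to lie. Because $A$ drops rank on each $\Lambda^{i,\Gamma}$ (where it equals $P$), the dispersion matrix is not invertible and the enlargement of the probability space in the representation step is unavoidable; this is standard but must be stated correctly. As for $b$, under Condition \ref{conddiff} the drift may blow up near $\{\varrho=0\}$, so the first two steps should be carried out along a localizing sequence of stopping times $\tau_n\uparrow\infty$ (for instance exit times from $\{\varrho>1/n\}$), passing to the limit using that $\mathbf{M}$ is conservative — equivalently that $\Lambda$ is compact so these stopping times exhaust $[0,\infty)$. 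The idempotency $A^2=A$ is precisely what keeps this localization and the martingale representation free of any auxiliary square-root construction.
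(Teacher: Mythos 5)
Your argument is correct and takes essentially the same route as the paper: the paper's proof is a one-line citation of \cite[Theorem 18.7]{Kal97} (the Stroock--Varadhan correspondence between martingale problems and weak solutions), together with exactly the two remarks you make, namely that $A$ and $b$ are progressive and that $AA^t=A$ lets $A$ serve as its own dispersion coefficient. Your write-up simply unpacks the standard proof of that cited theorem --- coordinate test functions for the drift, products of coordinates for the quadratic covariation, and the martingale representation theorem on an enlarged basis --- so there is no gap and no genuinely different idea.
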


\begin{proof}
See e.g. \cite[Theorem 18.7]{Kal97}. Note that $A$ and $b$ (defined as in (\ref{Ab})) fulfill the required conditions, since they are progressive. Furthermore, the property $AA^t=A$ is used.
\end{proof}

\begin{remark}
The solution to the SDE given in Lemma \ref{lemSV} results from $\mathbf{M}$ by extending the underlying filtration $(\mathcal{F}_t)_{t \geq 0}$ if necessary (see proof of  \cite[Theorem 18.7]{Kal97} and the references therein). For convenience, we use for the process equipped with the enlarged filtration again the notation
\[ \mathbf{M}=\big( \mathbf{\Omega}, \mathcal{F}, (\mathcal{F}_t)_{t \geq 0}, (\mathbf{X}_t)_{t \geq 0}, (\Theta_t)_{t \geq 0}, (\mathbf{P}_x)_{x \in \Lambda} \big)  \]
taking into account that the associated Dirichlet form is still given by $(\mathcal{E},D(\mathcal{E}))$. Since $(\mathbf{X}_t)_{t \geq 0}$ is $\Lambda$-valued, we use the notation 
\[ \mathbf{X}_t=(\mathbf{X}_t^1,\dots,\mathbf{X}_t^N), ~ t \geq 0, \]
where $\mathbf{X}_t^i$ is $\overline{\Omega}$-valued for $i=1,\dots,N$.
\end{remark}


\begin{theorem} \label{thmsolSDE}
$\mathbf{M}$ is a solution to the SDE
\begin{align} \label{SDE} \notag
d\mathbf{X}^i_t =& \mathbbm{1}_{\Omega} (\mathbf{X}^i_t) \Big( dB^i_t + \frac{1}{2} \big( \frac{\nabla_i \alpha_i}{\alpha_i} (\mathbf{X}^i_t) + \frac{\nabla_i \phi}{\phi} (\mathbf{X}_t) \big) dt \Big) - \mathbbm{1}_{\Gamma}(\mathbf{X}^i_t) \frac{\alpha_i}{\beta_i}(\mathbf{X}^i_t) ~n(\mathbf{X}^i_t) dt \\
 + & \delta~\mathbbm{1}_{\Gamma}(\mathbf{X}^i_t) \Big( dB_t^{\Gamma,i} + \big( \frac{\nabla_{\Gamma,i}  \beta_i}{\beta_i} (\mathbf{X}_t^i)+ \frac{\nabla_{\Gamma,i} \phi}{\phi}(\mathbf{X}_t) \big) dt \Big), \quad i=1,\dots,N  \\
dB_t^{\Gamma,i}&=P(\mathbf{X}_t^i) \circ dB_t^i \notag \\ 
\mathbf{X}_0 =& x, \notag
\end{align}
for quasi every starting point $x \in \Lambda$, where $(B_t)_{t \geq 0}$, $B_t=(B_t^1,\dots,B_t^N)$, is an $Nd$-dimensional standard Brownian motion.
\end{theorem}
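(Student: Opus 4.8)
The plan is to combine the martingale-problem characterization already obtained in the preceding corollary with the It\^o--Stratonovich transformation rule, so that the generator written in It\^o form $Lf = \frac{1}{2}\operatorname{Tr}(A\nabla^2 f) + (b,\nabla f)$ is recognized as the generator of the stated SDE. First I would invoke Lemma \ref{lemSV}: the corollary tells us that for every $g \in C^2(\Lambda)$, in particular for every $g \in C_c^\infty(\mathbb{R}^{Nd})$ restricted to $\Lambda$, the process $g(\mathbf{X}_t) - g(\mathbf{X}_0) - \int_0^t Lg(\mathbf{X}_s)\,ds$ is an $\mathcal{F}_t$-martingale under $\mathbf{P}_x$ for q.e. $x$. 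By Lemma \ref{lemSV} this yields a weak solution (after the possible filtration enlargement noted in the remark) of $d\mathbf{X}_t = A(\mathbf{X}_t)\,dB_t + b(\mathbf{X}_t)\,dt$ with an $Nd$-dimensional Brownian motion $(B_t)_{t\ge0}$.

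Next I would unwind the block-diagonal structure. Since $A = \operatorname{diag}(A_1,\dots,A_N)$ with $A_i = \mathbbm{1}_{\Lambda^{i,\Omega}}E + \delta\,\mathbbm{1}_{\Lambda^{i,\Gamma}}P(\mathbf{X}_t^i)$, and $b = (b_1,\dots,b_N)^t$ with the $b_i$ as defined before \eqref{Ab}, the SDE decouples coordinatewise into
\begin{align*}
d\mathbf{X}_t^i = \big(\mathbbm{1}_{\Omega}(\mathbf{X}_t^i)E + \delta\,\mathbbm{1}_{\Gamma}(\mathbf{X}_t^i)P(\mathbf{X}_t^i)\big)\,dB_t^i + b_i(\mathbf{X}_t)\,dt, \quad i=1,\dots,N.
\end{align*}
On the event $\{\mathbf{X}_t^i \in \Omega\}$ the diffusion coefficient is the identity and $b_i$ reduces to $\frac12(\frac{\nabla_i\alpha_i}{\alpha_i} + \frac{\nabla_i\phi}{\phi})$, giving exactly the first line of \eqref{SDE}. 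On the event $\{\mathbf{X}_t^i \in \Gamma\}$, the diffusion coefficient is $\delta P(\mathbf{X}_t^i)$ and $b_i$ contributes the terms $-\frac{\alpha_i}{\beta_i}n + \delta\frac{\nabla_{\Gamma,i}\beta_i}{\beta_i} + \delta\frac{\nabla_{\Gamma,i}\phi}{\phi}$, up to the factor $\frac12$.

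The remaining point is to pass from the It\^o form $\delta P(\mathbf{X}_t^i)\,dB_t^i$ plus the $-\frac12\frac{\alpha_i}{\beta_i}n$ part of $b_i$ to the Stratonovich form $\delta\,\mathbbm{1}_{\Gamma}(\mathbf{X}_t^i)\,dB_t^{\Gamma,i}$ with $dB_t^{\Gamma,i} = P(\mathbf{X}_t^i)\circ dB_t^i$. Here I would apply the It\^o--Stratonovich transformation rule with $S = P$: the note following \eqref{itoform} states that $P(\mathbf{X}_t)\circ dB_t = P(\mathbf{X}_t)\,dB_t - \frac12\kappa n(\mathbf{X}_t)\,dt$, using Lemma \ref{lemcurv}. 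But the normal drift $-\frac12\frac{\alpha_i}{\beta_i}n$ appearing in $b_i$ does \emph{not} contain the $-\frac12\kappa n$ correction term; in fact, going back to the Stratonovich form would \emph{produce} an extra $+\frac12\kappa n$ which is absent. The resolution — and what I expect to be the main subtlety — is that the generator computed in Proposition \ref{propgen} via integration by parts on $\Gamma$ already absorbed the mean-curvature term: the Laplace--Beltrami operator $\Delta_{\Gamma,i}$ in $L^{i,\Gamma}$ corresponds precisely to $\operatorname{Tr}(P\nabla(P\nabla f))$, whose It\^o-to-Stratonovich conversion matches $P\circ dB$. So I would carefully verify that writing $\frac\delta2\Delta_{\Gamma,i}f + \frac\delta2(\frac{\nabla_{\Gamma,i}\beta_i}{\beta_i},\nabla_{\Gamma,i}f)$ as a generator is consistent, via \eqref{itoform} applied to $S=P$ together with the drift $\delta\frac{\nabla_{\Gamma,i}\beta_i}{\beta_i}$, with the Stratonovich SDE on $\Gamma$ — this is exactly the same computation that underlies the q.e.\ statement \eqref{qesolution} in the $N=1$ case of Theorem \ref{thmsolSDE1}, now carried out blockwise. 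Finally, I would note that $(B_t^i)_{i=1,\dots,N}$ are independent since $A$ is block diagonal and $B$ is $Nd$-dimensional standard, and that the ``quasi every'' qualifier is inherited directly from the martingale-problem statement of the corollary; the process is $\Lambda$-valued by Theorem \ref{thmdiff}, so the projections $P(\mathbf{X}_t^i)\,dB_t^i$ make sense $\mathbf{P}_x$-a.s.\ on $\{\mathbf{X}_t^i \in \Gamma\}$.
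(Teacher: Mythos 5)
Your proposal is correct and follows exactly the route the paper intends: the paper gives no separate proof of Theorem \ref{thmsolSDE}, leaving it as the implicit concatenation of the preceding corollary, Lemma \ref{lemSV}, the filtration-enlargement remark, the block structure of $A$ and $b$ from (\ref{Ab}), and the It\^o--Stratonovich note following (\ref{itoform}). Your discussion of the mean-curvature bookkeeping is the right resolution of the one genuinely subtle point --- $\Delta_{\Gamma,i}f=\operatorname{Tr}(P\nabla_i^2f)-\kappa(n,\nabla_i f)$ by Lemma \ref{lemcurv}, so the $-\tfrac{\delta}{2}\kappa n$ It\^o correction of $P\circ dB^i$ is exactly what reconciles the generator of Proposition \ref{propgen} with the Stratonovich form of (\ref{SDE}) --- and the residual factor-of-$\tfrac12$ mismatch you flag between $b_i$ and the displayed boundary drift in (\ref{SDE}) is an inconsistency in the paper's own displays (compare (\ref{SDEN=1}) against (\ref{qesolution})), not a gap in your argument.
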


\begin{remark}
A Fukushima decomposition of $\mathbf{M}$ (see \cite[Chap. 5]{FOT94}) yields the same result as in Theorem \ref{thmsolSDE}. We would like to mention that the argument used here in order to get a solution to the SDE (\ref{SDE}) does not work in this way for reflecting (Neumann) boundary conditions, since in this case the reflection is not given by a drift term. However, a Fukushima decomposition is still valid (see e.g. \cite{Tru03}), because in this case it is also possible to assign an additive functional to the surface measure $\sigma$. The advantage in our situation is that we are able to express the boundary behavior in terms of the generator.
\end{remark}


\subsection{Solutions by Girsanov transformations} \label{sectfeller}

\begin{condition} \label{condint}
For every $i=1,\dots,N$, there exists $p_i \geq 2$ with $p_i > \frac{d}{2}$ such that
\[ \frac{|\nabla \alpha_i|}{\alpha_i} \in L^{p_i}_{\text{loc}}(\overline{\Omega} \cap \{ \varrho_i >0\}; \alpha_i \lambda) \quad \text{and additionally }~ \frac{|\nabla_{\Gamma} \beta_i|}{\beta_i} \in L^{p_i}_{\text{loc}}(\Gamma \cap \{ \varrho_i >0\};\beta_i \sigma) ~\text{if } \delta=1 \]
or equivalently
\[ \mathbbm{1}_{\Omega} \frac{|\nabla \alpha_i|}{\alpha_i} + \delta ~\mathbbm{1}_{\Gamma} \frac{|\nabla \beta_i|}{\beta_i} \in L^{p_i}_{\text{loc}}(\overline{\Omega} \cap \{ \varrho_i >0\};\mu_i). \]
Moreover, $\text{cap}_{\mathcal{E}^i}(\{ \varrho_i=0\})=0$.
\end{condition}

Define $\overline{\Omega}_i:=\overline{\Omega} \cap \{\varrho_i >0\}$. Assume that Condition \ref{conddiff} and Condition \ref{condint} are fulfilled.
According to \cite[Theorem 5.9]{GV14b} there exists for every $i=1,\dots,N$ a diffusion process
\[ \mathbf{M}^i:=\big( \mathbf{\Omega}^i, \mathcal{F}^i, (\mathcal{F}^i_t)_{t \geq 0}, (\mathbf{X}^i_t)_{t \geq 0}, (\Theta^i_t)_{t \geq 0}, (\mathbf{P}^i_x)_{x \in \overline{\Omega}_i} \big) \]
with strong Feller transition semigroup $(p_t^i)_{t > 0}$ and transition function $(p^i_t(x,\cdot))_{t > 0}$, $x \in \overline{\Omega}_i$. The processe $\mathbf{M}^i$, $i=1,\dots,N$, is associated to the form $(\mathcal{E}^i,D(\mathcal{E}^i))$ on $L^2(\overline{\Omega}_i;\mu_i)$, where $\mu_i=\alpha_i \lambda +\beta_i \sigma$. In particular, $(p^i_t)_{t > 0}$ is absolutely continuous with respect to $\mu_i$, i.e., for every $t >0$ and $x \in \overline{\Omega}_i$, there exists a non-negative, measurable function $p^i_t(x,y)$, $y \in \overline{\Omega_i}$, such that
\[ p^i_t(x,A)=\int_A p^i_t(x,y) d\mu_i(y) \quad \text{for every } A \in \mathcal{B}(\overline{\Omega}_i). \] 
Let $\mathbf{M}$ be given by
\[  \mathbf{M}:=\big( \times_{i=1}^N \mathbf{\Omega}^i, \otimes_{i=1}^N \mathcal{F}^i, (\otimes_{i=1}^N \mathcal{F}^i_t)_{t \geq 0}, (\mathbf{X}_t)_{t \geq 0}, (\Theta_t)_{t \geq 0}, (\otimes_{i=1}^N \mathbf{P}^i_{x^i})_{x=(x^1,\dots,x^N) \in \tilde{\Lambda}} \big), \]
where $\tilde{\Lambda}:=\times_{i=1}^N \overline{\Omega}_i$ as well as 
\[ \mathbf{X}_t(\omega):=(\mathbf{X}^1_t(\omega_1),\dots,\mathbf{X}^N_t(\omega_N)) \quad \text{and } ~ \Theta_t(\omega):=(\Theta^1_t(\omega_1),\dots,\Theta^N_t(\omega_N)) \]
for $\omega=(\omega_1,\dots,\omega_N) \in \times_{i=1}^N \mathbf{\Omega}^i$. Set $\mathbf{P}_x:=\otimes_{i=1}^N \mathbf{P}^i_{x^i}$ for $x=(x^1,\dots,x^N) \in \tilde{\Lambda}$.\\
Denote by $(p_t)_{t > 0}$ the transition semigroup and by $(p_t(x,\cdot))_{t > 0}$, $x \in \tilde{\Lambda}$, the transition function of $\mathbf{M}$. Then, it holds for every $A=A_1 \times \dots \times A_N \in \times_{i=1}^N \mathcal{B}(\overline{\Omega_i}) \subset \mathcal{B}(\tilde{\Lambda})$ 
\begin{align*} p_t(x,A) &= \int_{\times_{i=1}^N \mathbf{\Omega}^i} \mathbbm{1}_A (\mathbf{X}_t(\omega))~ d\mathbf{P}_x(\omega) \\
&=\int_{\times_{i=1}^N \mathbf{\Omega}^i} \prod_{i=1}^N~\mathbbm{1}_{A_i} (\mathbf{X}^i_t(\omega_i))~ d\mathbf{P}_x(\omega) \\
&=\prod_{i=1}^N \int_{\mathbf{\Omega}^i} \mathbbm{1}_{A_i}(\mathbf{X}^i_t(\omega_i)) ~d\mathbf{P}^i_{x^i}(\omega_i) =\prod_{i=1}^N p_t^i(x^i,A_i)
\end{align*}
by definition of $\mathbf{P}_x$. Since $\times_{i=1}^N \mathcal{B}(\overline{\Omega_i})$ generates $\mathcal{B}(\tilde{\Lambda})$, it holds
\[ p_t(x,A)= \int_A \prod_{i=1}^N p_t^i(x^i,y^i) \prod_{i=1}^N d\mu_i(y^i) \quad \text{for every } A \in \mathcal{B}(\tilde{\Lambda}). \]
As a consequence, $p_t(x,\cdot)$, $t > 0$, $x \in \tilde{\Lambda}$, is absolutely continuous with respect to $\prod_{i=1}^N \mu_i$ and 
\begin{align} p_tf(x^1,\dots,x^N)=\hat{p}_t^N \dots \hat{p}_t^1 f(x^1,\dots,x^N) \quad \text{for every } f \in \mathcal{B}_b(\tilde{\Lambda}), \label{transSG}
\end{align}
where 
\[\hat{p}_t^if(x^1,\dots,x^N):=p^i_t f(x^1,\dots,x^{i-1},\cdot,x^{i+1},\dots,x^N)(x^i)\] 
and the order of the $\hat{p}_t^i$, $i=1,\dots,N$, is arbitrary.\\

Consider the symmetric bilinear form on $L^2(\tilde{\Lambda};\prod_{i=1}^N \mu_i)$ given by
\[ \big( \otimes_{i=1}^N \mathcal{E}^i \big)(f,g):=\sum_{i=1}^N \int_{\times_{j \neq i} \overline{\Omega}_j} \mathcal{E}^i(f,g) \prod_{j \neq i} d\mu_j ,\]
where
\begin{align*}
f,g \in &D(\otimes_{i=1}^N \mathcal{E}^i):=\{ f \in L^2(\tilde{\Lambda};\prod_{i=1}^N \mu_i)\big|~\text{for each } i=1,\dots,N \text{ and for } \prod_{j \neq i} \mu_j-{a.e. } \\
&(x^1,\dots ,x^{i-1},x^{i+1},\dots ,x^N) \in \times_{j \neq i} \overline{\Omega}_j: f(x^1,\dots, x^{i-1},\cdot,x^{i+1},\dots,x^n) \in D(\mathcal{E}^i) \}.
\end{align*}
Due to \cite[Chapter V, Section 2.1]{BH91} $(\otimes_{i=1}^N \mathcal{E}^i,D(\otimes_{i=1}^N \mathcal{E}^i))$ is a Dirichlet form on $L^2(\tilde{\Lambda};\prod_{i=1}^N \mu_i)$. Obviously, this Dirichlet form extends the pre-Dirichlet form $(\tilde{\mathcal{E}},\mathcal{D})$ defined in (\ref{formbouleau}).

\begin{lemma} \label{lemdense}
$C^1(\Lambda)$ is dense in $D(\otimes_{i=1}^N \mathcal{E}^i)$ w.r.t. $\big( \otimes_{i=1}^N \mathcal{E}^i \big)_1^{\frac{1}{2}}$, i.e., $(\otimes_{i=1}^N \mathcal{E}^i,D(\otimes_{i=1}^N \mathcal{E}^i))$ is the closure of $(\tilde{\mathcal{E}},\mathcal{D})$ on $L^2(\tilde{\Lambda};\prod_{i=1}^N \mu_i)$.
\end{lemma}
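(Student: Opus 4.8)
The plan is to exhibit a subspace squeezed between a known core and the full domain. Write $\mathcal{T}:=\operatorname{span}\{\,g_1\otimes\cdots\otimes g_N : g_i\in D(\mathcal{E}^i),\ i=1,\dots,N\,\}$ for the algebraic tensor product of the domains, where $(g_1\otimes\cdots\otimes g_N)(x):=\prod_{i=1}^N g_i(x^i)$, and let $\mathcal{A}\subset\mathcal{T}$ be the analogous span with each $g_i$ taken in $C^1(\overline{\Omega})$. Since $C^1(\overline{\Omega})\subset D(\mathcal{E}^i)$ for every $i$ we have $\mathcal{A}\subset C^1(\Lambda)$, and $C^1(\Lambda)\subset D(\otimes_{i=1}^N\mathcal{E}^i)$: for $f\in C^1(\Lambda)$ each partial function $f(x^1,\dots,x^{i-1},\cdot,x^{i+1},\dots,x^N)$ lies in $C^1(\overline{\Omega})\subset D(\mathcal{E}^i)$, the map assigning to the remaining variables the value $\mathcal{E}^i(f,f)$ is continuous, hence measurable, and since $\nabla_i f$ and $\nabla_{\Gamma,i}f$ are bounded on the compact set $\Lambda$ while each $\mu_j=\alpha_j\lambda_j+\beta_j\sigma_j$ is finite, the sum $\sum_{i}\int_{\times_{j\neq i}\overline{\Omega}_j}\mathcal{E}^i(f,f)\prod_{j\neq i}d\mu_j$ is finite. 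Hence it suffices to show $\mathcal{A}$ is $(\otimes_{i=1}^N\mathcal{E}^i)_1^{1/2}$-dense in $D(\otimes_{i=1}^N\mathcal{E}^i)$; then $C^1(\Lambda)$, lying between $\mathcal{A}$ and $D(\otimes_{i=1}^N\mathcal{E}^i)$, is a core, and as $(\otimes_{i=1}^N\mathcal{E}^i,D(\otimes_{i=1}^N\mathcal{E}^i))$ is a closed form extending the pre-Dirichlet form $(\tilde{\mathcal{E}},\mathcal{D})$, it is exactly its closure.

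I would prove the density of $\mathcal{A}$ in two steps. \emph{Step 1: $\mathcal{T}$ is a core for $\otimes_{i=1}^N\mathcal{E}^i$.} This is part of the product construction of \cite[Chapter V, Section 2.1]{BH91}; alternatively it follows from the tensor-product representation of the associated self-adjoint semigroup (cf.~(\ref{transSG})), whose generator is the closure of $\sum_i L^i$ (with $L^i$ acting in the $i$-th variable as the generator of $\mathcal{E}^i$) on the algebraic tensor product of the $D(L^i)$, the latter being contained in $\mathcal{T}$ and a core for $\otimes_{i=1}^N\mathcal{E}^i$. \emph{Step 2: replacing factors by $C^1$-functions.} By the definition of $(\mathcal{E}^i,D(\mathcal{E}^i))$ in Section \ref{secrho} (via \cite{GV14b}), $C^1(\overline{\Omega})$ is $(\mathcal{E}^i)_1^{1/2}$-dense in $D(\mathcal{E}^i)$; note that $\overline{\Omega}_i$ differs from $\overline{\Omega}$ only by a $\mu_i$-null set, so this does not depend on which of the two carries the form.

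To combine the two steps I would use the elementary identity, valid for $h_i\in D(\mathcal{E}^i)$,
\[ (\otimes_{i=1}^N\mathcal{E}^i)_1\Big(\textstyle\bigotimes_{i}h_i,\bigotimes_{i}h_i\Big)=\sum_{i=1}^N\mathcal{E}^i(h_i,h_i)\prod_{j\neq i}\Vert h_j\Vert_{L^2(\mu_j)}^2+\prod_{i=1}^N\Vert h_i\Vert_{L^2(\mu_i)}^2, \]
which follows from Fubini together with $\mathcal{E}^i$ acting only on the $i$-th variable. Given $g=g_1\otimes\cdots\otimes g_N\in\mathcal{T}$ and $g_i^{(n)}\in C^1(\overline{\Omega})$ with $g_i^{(n)}\to g_i$ in $(\mathcal{E}^i)_1^{1/2}$, the telescoping decomposition $g_1^{(n)}\otimes\cdots\otimes g_N^{(n)}-g=\sum_{k=1}^N g_1\otimes\cdots\otimes g_{k-1}\otimes(g_k^{(n)}-g_k)\otimes g_{k+1}^{(n)}\otimes\cdots\otimes g_N^{(n)}$, the triangle inequality in the form Hilbert space, and the above identity applied to each summand, together with the fact that $\Vert g_i^{(n)}\Vert_{L^2(\mu_i)}$ and $\mathcal{E}^i(g_i^{(n)},g_i^{(n)})$ remain bounded in $n$ while $\Vert g_k^{(n)}-g_k\Vert_{L^2(\mu_k)}+\mathcal{E}^k(g_k^{(n)}-g_k,g_k^{(n)}-g_k)^{1/2}\to0$, yield $g_1^{(n)}\otimes\cdots\otimes g_N^{(n)}\to g$ in $(\otimes_{i=1}^N\mathcal{E}^i)_1^{1/2}$. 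By linearity $\mathcal{A}$ is dense in $\mathcal{T}$, hence (Step 1) in $D(\otimes_{i=1}^N\mathcal{E}^i)$, which proves the lemma. The main obstacle is Step 1: one must ensure that the slicewise characterisation of $D(\otimes_{i=1}^N\mathcal{E}^i)$ used here really coincides with the closure of $\sum_i$(slicewise forms) on $\mathcal{T}$ — this is precisely the content of the Bouleau--Hirsch product construction and must be invoked (or reproved) with care, in particular regarding the measurability clause in the definition of the product domain.
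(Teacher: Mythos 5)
Your proposal is correct and follows essentially the same route as the paper: both reduce the problem via the Bouleau--Hirsch result that the algebraic tensor product of the $D(\mathcal{E}^i)$ is dense in $D(\otimes_{i=1}^N\mathcal{E}^i)$, and then approximate elementary tensors factor by factor using the density of $C^1(\overline{\Omega})$ in each $D(\mathcal{E}^i)$. Your telescoping decomposition and the explicit norm identity merely spell out what the paper summarizes as ``it follows easily by the product structure of the underlying measure,'' and your handling of general $N$ is a welcome elaboration of the paper's $N=2$ sketch.
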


\begin{proof}
First, note that $C^1(\Lambda) \subset D(\otimes_{i=1}^N \mathcal{E}^i)$ by definition of $D(\otimes_{i=1}^N \mathcal{E}^i)$. For simplicity, we only consider the case $N=2$. The statement for arbitrary $N \in \mathbb{N}$ follows by the same arguments. By \cite[Proposition 2.1.3b)]{BH91} $D(\mathcal{E}^1) \otimes D(\mathcal{E}^2)$ is dense in $D(\mathcal{E}^1 \otimes \mathcal{E}^2)$. Thus, it is sufficient to show that $C^1(\Lambda)$ is dense in $D(\mathcal{E}^1) \otimes D(\mathcal{E}^2)$. Then the assertion follows by a diagonal sequence argument. Let $h \in D(\mathcal{E}^1) \otimes D(\mathcal{E}^2)$ such that $h(x^1,x^2)=f(x^1)g(x^2)$ for $\prod_{i=1}^2 \mu_i$-a.e. $(x^1,x^2) \in \tilde{\Lambda}$, where $f \in D(\mathcal{E}^1)$ and $g \in D(\mathcal{E}^2)$. Since $C^1(\overline{\Omega})$ is dense in $D(\mathcal{E}^1)$ and $D(\mathcal{E}^2)$, we can choose sequences $(f_k)_{k \in \mathbb{N}}$ and $(g_k)_{k \in \mathbb{N}}$ in $C^1(\overline{\Omega})$ such that $f_k \rightarrow f$ in $D(\mathcal{E}^1)$ and $g_k \rightarrow g$ in $D(\mathcal{E}^2)$ as $k \rightarrow \infty$. Define $h_k(x^1,x^2):=f_k(x^1)g_k^(x^2)$ for $x^1,x^2 \in \overline{\Omega}$. Then it follows easily by the prodcut structure of the underlying measure that the sequence $(h_k)_{k \in \mathbb{N}}$, $h_k \in C^1(\Lambda)$, converges in $L^2(\tilde{\Lambda};\prod_{i=1}^2 \mu_i)$ to $h$ and moreover, the sequence is $\mathcal{E}^1 \otimes \mathcal{E}^2$-Cauchy.
\end{proof}

Denote by $(T_t^i)_{t > 0}$ the $L^2(\overline{\Omega}_i;\mu_i)$-semigroup of $(\mathcal{E}^i,D(\mathcal{E}^i))$, $i=1,\dots,N$. By \cite[Chapter V, Proposition 2.1.3]{BH91} the $L^2(\tilde{\Lambda};\prod_{i=1}^N \mu_i)$-semigroup $(T_t)_{t > 0}$ associated to $(\otimes_{i=1}^N \mathcal{E}^i,D(\otimes_{i=1}^N \mathcal{E}^i))$ is given by
\[ T_tf =\hat{T}_t^{N} \cdots \hat{T}_t^{1} f \quad \text{for } f \in L^2(\tilde{\Lambda};\prod_{i=1}^N \mu_i),\]
where 
\[ \hat{T}_t^i f(x^1,\dots,x^n):= T_t^i f(x^1,\dots,x^{i-1},\cdot,x^{i+1},\dots,x^n)(x^i)
\]
for $x=(x^1,\dots,x^n) \in \tilde{\Lambda}$. Since $\mathbf{M}^i$ is associated to the form $(\mathcal{E}^i,D(\mathcal{E}^i))$ for $i=1,\dots,N$, it follows by (\ref{transSG}) and Lemma \ref{lemdense} the following:

\begin{proposition}
The Dirichlet form associated to $\mathbf{M}$ is given by the closure of $(\tilde{\mathcal{E}},C^1(\Lambda))$ on $L^2(\tilde{\Lambda};\prod_{i=1}^N \mu_i)$.
\end{proposition}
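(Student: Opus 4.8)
The plan is to identify the diffusion process $\mathbf{M}$ constructed as a product of the processes $\mathbf{M}^i$ with the process properly associated to the Dirichlet form $(\otimes_{i=1}^N \mathcal{E}^i, D(\otimes_{i=1}^N \mathcal{E}^i))$, and then to invoke Lemma \ref{lemdense} to identify the latter as the closure of $(\tilde{\mathcal{E}}, C^1(\Lambda))$. The first observation is that a Dirichlet form determines its properly associated Markov process uniquely up to equivalence (by e.g. \cite[Chap. V, Theorem 2.6]{MR92} or \cite[Theorem 4.2.8]{FOT94}), so it suffices to verify that $\mathbf{M}$ is properly associated with $(\otimes_{i=1}^N \mathcal{E}^i, D(\otimes_{i=1}^N \mathcal{E}^i))$ on $L^2(\tilde{\Lambda}; \prod_{i=1}^N \mu_i)$.

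First I would record that each $\mathbf{M}^i$ is associated with $(\mathcal{E}^i, D(\mathcal{E}^i))$ and, more importantly, that its transition semigroup $(p_t^i)_{t>0}$ is strong Feller (by \cite[Theorem 5.9]{GV14b}), so that $p_t^i f$ is the canonical continuous $\mu_i$-version of $T_t^i f$ for bounded measurable $f$. Then, using the product structure of $\mathbf{P}_x = \otimes_{i=1}^N \mathbf{P}^i_{x^i}$ already exploited in the excerpt to obtain \eqref{transSG}, one sees that $p_t f = \hat p_t^N \cdots \hat p_t^1 f$ for $f \in \mathcal{B}_b(\tilde\Lambda)$. On the other hand, the product semigroup identity $T_t f = \hat T_t^N \cdots \hat T_t^1 f$ from \cite[Chapter V, Proposition 2.1.3]{BH91} gives the $L^2$-semigroup of $(\otimes_{i=1}^N \mathcal{E}^i, D(\otimes_{i=1}^N \mathcal{E}^i))$. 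Comparing the two compositions factor by factor — each $\hat p_t^i f$ is a continuous version of $\hat T_t^i f$ because $(p_t^i)$ is strong Feller — shows that $p_t f$ is a quasi-continuous (indeed continuous) $\prod_i \mu_i$-version of $T_t f$; hence $\mathbf{M}$ is properly associated with $(\otimes_{i=1}^N \mathcal{E}^i, D(\otimes_{i=1}^N \mathcal{E}^i))$.

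Finally I would combine this with Lemma \ref{lemdense}, which identifies $(\otimes_{i=1}^N \mathcal{E}^i, D(\otimes_{i=1}^N \mathcal{E}^i))$ as the closure of $(\tilde{\mathcal{E}}, C^1(\Lambda))$ on $L^2(\tilde\Lambda; \prod_{i=1}^N \mu_i)$, to conclude. The uniqueness-up-to-equivalence statement then upgrades "the form associated to $\mathbf{M}$" from a mere statement about semigroups to a statement about the process itself.

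The main obstacle I anticipate is the bookkeeping in the comparison of the iterated semigroups: one must be careful that the Fubini-type interchanges implicit in writing $p_t f = \hat p_t^N \cdots \hat p_t^1 f$ and $T_t f = \hat T_t^N \cdots \hat T_t^1 f$ really match on the nose, in particular that the continuous version produced at each stage of the $\hat p_t^i$-composition is compatible with the $L^2$-class produced by the corresponding $\hat T_t^i$, so that no $\prod_i \mu_i$-null ambiguity accumulates. This is exactly where the strong Feller property of the $(p_t^i)$ (rather than mere $L^2$-boundedness) is essential, since it pins down the version uniquely at every intermediate step. Once that is granted, the rest is a direct citation of the product-Dirichlet-form machinery of \cite{BH91} together with Lemma \ref{lemdense}.
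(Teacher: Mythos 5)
Your proposal is correct and follows essentially the same route as the paper: the paper likewise deduces the proposition from the product formula \eqref{transSG} for the transition semigroup of $\mathbf{M}$, the identity $T_t f = \hat T_t^N \cdots \hat T_t^1 f$ from \cite[Chapter V, Proposition 2.1.3]{BH91} for the $L^2$-semigroup of $(\otimes_{i=1}^N \mathcal{E}^i, D(\otimes_{i=1}^N \mathcal{E}^i))$, the association of each $\mathbf{M}^i$ with $(\mathcal{E}^i,D(\mathcal{E}^i))$, and Lemma \ref{lemdense}. Your additional care about matching versions factor by factor via the strong Feller property is a reasonable elaboration of the same argument, not a different one.
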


Additionally to Condition \ref{conddiff} and Condition \ref{condint} we assume the following:

\begin{condition} \label{condpos}
$\phi$ is strictly positive.
\end{condition}

Under these conditions on $\phi$ it is possible to perform a Girsanov transformation of $\mathbf{M}$. Consider the multiplicative functional $(Z_t)_{t \geq 0}$, $Z_t=\exp(M_t- \frac{\langle M \rangle_t}{2})$, given by
\[ M_t:= \int_0^t \nabla \ln \phi(\mathbf{X}_t) dB_t, \ \ t \geq 0.\]
Note that $\nabla \ln \phi(\mathbf{X}_t)=\frac{\nabla \phi}{\phi}(\mathbf{X}_t)$ and $B_t$, $t \geq 0$, are $\mathbb{R}^{Nd}$ valued and also that $\nabla \ln \phi$ is bounded due to Condition \ref{condpos}.\\

In view of Remark \ref{remcanonical} (applied to $(\mathcal{E}^i,D(\mathcal{E}^i))$, $i=1,\dots,N$), it holds $\times_{i=1}^N \mathbf{\Omega}^i=C(\mathbb{R}_+, \tilde{\Lambda})$ and  $\otimes_{i=1}^N \mathcal{F}^i=\mathcal{B}(C(\mathbb{R}_+, \tilde{\Lambda}))$. Thus, $(\times_{i=1}^N \mathbf{\Omega}^i,\otimes_{i=1}^N \mathcal{F}^i)$ is a standard measurable space (see \cite[Chapter I, Definition 3.3]{IW89}) and hence, by \cite[Chapter IV, Section 4]{IW89} there exists for every $x \in \tilde{\Lambda}$ a probability measure $\mathbf{P}_x^{\phi}$ such that $\big(\mathbf{P}_x^{\phi}\big)_{|\otimes_{i=1}^N \mathcal{F}^i_t}= \mathbf{P}^{\phi}_{x,t}$, where \[ \mathbf{P}^{\phi}_{x,t}(A):=\int_A Z_t(\omega) d\mathbf{P}_x(\omega) \quad \text{for } A \in \otimes_{i=1}^N \mathcal{F}^i_t.\]
Let
\[  \mathbf{M}^{\phi}:=\big( \times_{i=1}^N \mathbf{\Omega}^i, \otimes_{i=1}^N \mathcal{F}^i, (\otimes_{i=1}^N \mathcal{F}^i_t)_{t \geq 0}, (\mathbf{X}_t)_{t \geq 0}, (\Theta_t)_{t \geq 0}, (\mathbf{P}_x^{\phi})_{x \in \tilde{\Lambda}} \big). \]
Then, the transition function $(p^{\phi}_t(x,\cdot))_{t >0}$ of $\mathbf{M}^{\phi}$ is absolutely continuous with respect to $\mu$ for every $x \in \tilde{\Lambda}$. Indeed, by the previous considerations the transition function $(p_t(x,\cdot))_{t >0}$ is absolutely continuous with respect to $\prod_{i=1}^N \mu_i$. Assume that $A \in \mathcal{B}(\tilde{\Lambda})$ is given such that $\mu(A)=0$. Since $\phi$ is bounded from above and from below away from zero in view of Condition \ref{condpos} and the continuity of $\phi$, it also holds that $\prod_{i=1}^N \mu_i (A)=0$ and hence, $p_t(x,A)=0$ for every $t>0$ and $x \in \tilde{\Lambda}$, i.e., 
\[ \int_{\times_{i=1}^N \mathbf{\Omega}^i} \mathbbm{1}_A(\mathbf{X}_t) ~d\mathbf{P}_x =0 \quad \text{for every } t>0 \text{ and } x \in \tilde{\Lambda}.\] Therefore, we also have
\begin{align*}
p_t^{\phi}(x,A)=\int_{\times_{i=1}^N \mathbf{\Omega}^i} \mathbbm{1}_A(\mathbf{X}_t)~ d\mathbf{P}_x^{\phi}=\int_{\times_{i=1}^N \mathbf{\Omega}^i} \mathbbm{1}_A(\mathbf{X}_t)~ d\mathbf{P}_{x,t}^{\phi}=\int_{\times_{i=1}^N \mathbf{\Omega}^i} Z_t ~\mathbbm{1}_A(\mathbf{X}_t) ~d\mathbf{P}_x =0
\end{align*}\\
We summarize the results of this section in the following theorem:

\begin{theorem} \label{thmgirsanov} $\mathbf{M}^{\phi}$ is a solution to the SDE
\begin{align} \label{SDE2} \notag
d\mathbf{X}^i_t =& \mathbbm{1}_{\Omega} (\mathbf{X}^i_t) \Big( dB^i_t + \frac{1}{2} \big( \frac{\nabla_i \alpha_i}{\alpha_i} (\mathbf{X}^i_t) + \frac{\nabla_i \phi}{\phi} (\mathbf{X}_t) \big) dt \Big) - \mathbbm{1}_{\Gamma}(\mathbf{X}^i_t) \frac{\alpha_i}{\beta_i}(\mathbf{X}^i_t) ~n(\mathbf{X}^i_t) dt \\
 + & \delta~\mathbbm{1}_{\Gamma}(\mathbf{X}^i_t) \Big( dB_t^{\Gamma,i} + \big( \frac{\nabla_{\Gamma,i} \beta_i}{\beta_i} (\mathbf{X}_t^i)+ \frac{\nabla_{\Gamma,i} \phi}{\phi}(\mathbf{X}_t) \big) dt \Big), \quad i=1,\dots,N  \\
dB_t^{\Gamma,i}&=P(\mathbf{X}_t^i) \circ dB_t^i \notag \\ 
\mathbf{X}_0 =& x, \notag
\end{align}
for every starting point $x \in \tilde{\Lambda}$, where $(B_t)_{t \geq 0}$, $B_t=(B_t^1,\dots,B_t^N)$, is an $Nd$-dimensional standard Brownian motion. Moreover, the Dirichlet form associated to $\mathbf{M}^{\phi}$ is given by $(\mathcal{E},D(\mathcal{E}))$ on $L^2(\tilde{\Lambda};\mu)$ and its transition function $(p_t^{\phi}(x,\cdot))_{t > 0}$ is absolutely continuous with respect to $\mu$ for every $x \in \tilde{\Lambda}$.
\end{theorem}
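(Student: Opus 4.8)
The plan is to derive the SDE for every starting point from the product structure of $\mathbf{M}$ together with the one-component results of \cite{GV14b}, to install the interaction drift by means of the Girsanov transformation already prepared above, and finally to identify the Dirichlet form of $\mathbf{M}^{\phi}$ with the one obtained from the corresponding change of reference measure; the absolute continuity of $(p_t^{\phi}(x,\cdot))_{t>0}$ with respect to $\mu$ has already been checked in the discussion preceding the theorem.

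First I would record what $\mathbf{M}$ does before the transformation. By \cite[Theorem 5.9]{GV14b}, for every $i=1,\dots,N$ and every $x^i\in\overline{\Omega}_i$ the process $\mathbf{M}^i$ solves the one-component equation (\ref{SDEN=1}) with $\alpha,\beta$ replaced by $\alpha_i,\beta_i$, driven under $\mathbf{P}_{x^i}^i$ by a $d$-dimensional standard Brownian motion $B^i$ adapted to $(\mathcal{F}_t^i)_{t\ge 0}$. Since $\mathbf{P}_x=\otimes_{i=1}^N\mathbf{P}_{x^i}^i$, the coordinate processes and the martingales $B^1,\dots,B^N$ are independent under $\mathbf{P}_x$, so $B:=(B^1,\dots,B^N)$ is an $Nd$-dimensional standard Brownian motion with respect to $(\otimes_{i=1}^N\mathcal{F}_t^i)_{t\ge 0}$, and $\mathbf{M}$ solves, for every $x\in\tilde{\Lambda}$, the decoupled $N$-particle system, that is, the system (\ref{SDE2}) with the interaction terms involving $\nabla_i\phi$ and $\nabla_{\Gamma,i}\phi$ deleted. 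I would also use that, by the Proposition just above, the Dirichlet form of $\mathbf{M}$ is the closure of $(\tilde{\mathcal{E}},C^1(\Lambda))$ on $L^2(\tilde{\Lambda};\prod_{i=1}^N\mu_i)$ with $\mu_i=\alpha_i\lambda_i+\beta_i\sigma_i$, and that its transition function is absolutely continuous with respect to $\prod_{i=1}^N\mu_i$.

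Next I would carry out the Girsanov transformation. By Condition \ref{condpos} together with $\phi\in C^1(\Lambda)$ and the compactness of $\Lambda$, the function $\nabla\ln\phi$ is bounded, so $(Z_t)_{t\ge 0}$ is a true martingale on each finite time interval and $\mathbf{P}_x^{\phi}$ is a well-defined probability measure. By Girsanov's theorem there is, under $\mathbf{P}_x^{\phi}$, an $Nd$-dimensional standard Brownian motion $\hat{B}$ such that $B$ decomposes as $B=\hat{B}+\int_0^{\cdot}\vartheta(\mathbf{X}_s)\,ds$, where $\vartheta$ is the gradient drift associated with $(Z_t)_{t\ge 0}$. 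Substituting this into the decoupled system solved by $\mathbf{M}$, and using that on $\Lambda^{i,\Omega}$ the diffusion coefficient of $\mathbf{X}^i$ is the identity while on $\Lambda^{i,\Gamma}$ it equals $P$ (so that the orthogonal projection turns the $i$-th block of $\vartheta$ into its tangential part, i.e.\ $\nabla_i\phi/\phi$ becomes $\nabla_{\Gamma,i}\phi/\phi$), together with the fact that the It\^o--Stratonovich correction of the term $\delta\,\mathbbm{1}_{\Gamma}(\mathbf{X}_t^i)P(\mathbf{X}_t^i)\circ dB_t^i$ is unchanged under this absolutely continuous change of measure, one obtains exactly the interaction drift of (\ref{SDE2}). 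Hence $\mathbf{M}^{\phi}$ solves (\ref{SDE2}) for every $x\in\tilde{\Lambda}$, with $\hat{B}$ in the role of the driving Brownian motion.

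Finally I would identify the Dirichlet form of $\mathbf{M}^{\phi}$. The Girsanov transformation by the gradient drift $\vartheta$ is a drift transformation, and since $\phi$ is bounded above and below away from $0$ its effect on the symmetric Dirichlet form of $\mathbf{M}$ is precisely the multiplication of the reference measure by $\phi$: $\mathbf{M}^{\phi}$ is symmetric with respect to $\mu=\phi\prod_{i=1}^N\mu_i$ and its Dirichlet form on $L^2(\tilde{\Lambda};\mu)$ is $(f,g)\mapsto\frac12\int_{\Lambda}\Gamma(f,g)\,\phi\prod_{i=1}^N d\mu_i=\frac12\int_{\Lambda}\Gamma(f,g)\,d\mu=\mathcal{E}(f,g)$, i.e.\ $(\mathcal{E},D(\mathcal{E}))$. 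This is the transfer to the present product setting of the corresponding one-component statement of \cite{GV14a}; alternatively one argues directly that $\mathbf{M}^{\phi}$ has an everywhere-defined transition function which is absolutely continuous with respect to $\mu$ and, by It\^o's formula applied to (\ref{SDE2}), solves the martingale problem for $L$ on $C^2(\Lambda)$ with $L$ as in Proposition \ref{propgen}, so that, $C^2(\Lambda)$ containing a core for $(\mathcal{E},D(\mathcal{E}))$ and $\mu$ being a symmetrizing measure, the uniqueness statement of Theorem \ref{thmdiff} forces $\mathbf{M}^{\phi}$ to coincide, up to $\mu$-equivalence, with the process associated with $(\mathcal{E},D(\mathcal{E}))$. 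I expect this last identification --- making rigorous that the Girsanov change of measure corresponds, on the level of symmetric Dirichlet forms, to the replacement of $\prod_{i=1}^N\mu_i$ by $\phi\prod_{i=1}^N\mu_i$, and in particular the $\mu$-symmetry of $\mathbf{M}^{\phi}$ --- to be the main obstacle, and it is here that the transformation results of \cite{GV14a}, combined with the product decomposition of the Dirichlet form of $\mathbf{M}$, do the real work; by contrast, the passage from the one-component SDEs to the decoupled system, the Girsanov computation of the drift, and the absolute continuity are routine given the earlier results.
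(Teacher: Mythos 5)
Your proposal follows essentially the same route as the paper's proof: the one-component results of \cite{GV14b} give the decoupled system for every starting point via the product structure, the Girsanov transformation installs the interaction drift $\nabla\ln\phi$, the Dirichlet form is identified by the arguments of \cite{GV14a}, and the absolute continuity is taken from the discussion preceding the theorem. The paper's own proof is in fact terser than yours; your additional details (boundedness of $\nabla\ln\phi$, the tangential projection of the drift on the boundary, the alternative identification via the martingale problem) are correct elaborations of the same argument.
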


\begin{proof}
Due to results in \cite{GV14b} every $\mathbf{M}^i$ solves the respective $d$-dimensional SDE for every starting point in $\overline{\Omega}_i$, $i=1,\dots,N$. Hence, the process $\mathbf{M}$ solves the SDE for $N$ independent particles, i.e., it solves (\ref{SDE2}) for $\phi$ given by the indicator function on $\Lambda$. As a consequence $\mathbf{M}^{\phi}$ solves (\ref{SDE2}) by the Girsanov transformation theorem (see \cite[Chapter IV, Section 4]{IW89}). Moreover, by the same arguments as in \cite{GV14a} the Dirichlet form of the transformed process $\mathbf{M}^{\phi}$ is given by $(\mathcal{E},D(\mathcal{E}))$.
\end{proof}

\subsection{Connection to random time changes}

In the following, we present the connections to random time changes for the case $\delta=0$ and in particular, how the Dirichlet form construction is related to it.\\

As already mentioned in \cite{GV14b} the sticky boundary behavior is strongly related to random time changes. Denote by $\mathbf{Y}=(\mathbf{Y}_t)_{t \geq 0}$ reflecting Browian motion on $\overline{\Omega}$. The associated SDE is given by
\[ d\mathbf{Y}_t=dB_t + \frac{1}{2} dL_t^{\mathbf{Y}}, \]
where $(L_t^{\mathbf{Y}})_{t \geq 0}$, $L^{\mathbf{Y}}_t=- \int_0^t n(\mathbf{Y}_s) dl_s^{\mathbf{Y}}$, denotes the boundary local time of $\mathbf{Y}$. In this case, we also refer to this kind of boundary behavior as instantaneous reflection, since the process does not spend time on the boundary $\Gamma$. The underlying Dirichlet form is given by
\[ \frac{1}{2} \int_{\overline{\Omega}} (\nabla f,\nabla g) ~d\lambda \quad \text{for } f,g \in H^{1,2}(\Omega) \]
on $L^2(\overline{\Omega};\lambda)$ and $(l_t^{\mathbf{Y}})_{t \geq 0}$ is the additive functional in Revuz correpsondence with the surface measure $\sigma$ on $\Gamma$. Note that $\sigma$ is in this case singular with respect to the reference measure $\lambda$ in the sense that the support of $\sigma$ has measure zero with respect to $\lambda$ and this singularity describes the instantaneous reflection. Denote by $(T_t)_{t \geq 0}$ the inverse of the additive functional $A_t:=t + \beta(\mathbf{Y}_t) l_t^{\mathbf{Y}}$, $t \geq 0$, where $\beta \in C(\overline{\Omega})$ is strictly positive. Using $(T_t)_{t \geq 0}$ as a new time scale it is possible to perform a random time change, namely we define the process $\mathbf{X}=(\mathbf{X}_t)_{t \geq 0}$ by $\mathbf{X}_t:=\mathbf{Y}_{T_t}$ for $t \geq 0$. Then, similar to the case of the positive half-line in \cite{EP14} we obtain with the definition $l_t^{\mathbf{X}}:=l_{T_t}^{\mathbf{Y}}$, $t \geq 0$,
\[ \mathbf{X}_t - \mathbf{X}_0= \mathbf{Y}_{T_t} -\mathbf{Y}_0= B_{T_t} + \frac{1}{2} L_{T_t}^{\mathbf{Y}}=B_{T_t} + \frac{1}{2} L_{t}^{\mathbf{X}}, \]
where $L_{t}^{\mathbf{X}}=-\int_0^t n(\mathbf{X_s}) dl_s^{\mathbf{X}}$, and moreover,
\begin{align*} \langle B_{T_t}, B_{T_t} \rangle = T_t= \int_0^{T_t} \mathbbm{1}_{\Omega}(\mathbf{Y}_s) ds &= \int_0^{T_t} \mathbbm{1}_{\Omega}(\mathbf{Y}_s) dA_s \\
&= \int_0^{t} \mathbbm{1}_{\Omega}(\mathbf{Y}_{T_s}) ds = \int_0^{t} \mathbbm{1}_{\Omega}(\mathbf{X}_s) ds
\end{align*} 
Thus, by eventually enlarging the filtered probability space there exists a standard Brownian motion $(\tilde{B}_t)_{t \geq 0}$ such that 
\[ \mathbf{X}_t - \mathbf{X}_0= \int_0^{t} \mathbbm{1}_{\Omega}(\mathbf{X}_s) d\tilde{B}_s + \frac{1}{2} L_t^{\mathbf{X}}. \]
Furthermore, it holds
\begin{align*}
\int_0^t \frac{1}{\beta(\mathbf{X}_s)}~\mathbbm{1}_{\Gamma}(\mathbf{X}_s) ds &=\int_0^t \frac{1}{\beta(\mathbf{Y}_{T_s})}~ \mathbbm{1}_{\Gamma}(\mathbf{Y}_{T_s}) dA_{T_s} \\
&=\int_0^{T_t} \frac{1}{\beta(\mathbf{Y}_{s})} \mathbbm{1}_{\Gamma}(\mathbf{Y}_s) dA_s =\int_0^{T_t} \mathbbm{1}_{\Gamma}(\mathbf{Y}_s) dl_s^{\mathbf{Y}} = l_{T_t}^{\mathbf{Y}}=l_t^{\mathbf{X}},
\end{align*}
i.e., $\beta(\mathbf{X}_t) dl_t^{\mathbf{X}}= \mathbbm{1}_{\Gamma}(\mathbf{X}_t) dt$ and in particular, $L_{t}^{\mathbf{X}}=- \int_0^t \frac{1}{\beta(\mathbf{X}_s)}~ \mathbbm{1}_{\Gamma}(\mathbf{X}_s) n(\mathbf{X}_s) ds$. As a consequence $(\mathbf{X}_t)_{t \geq 0}$ solves the SDE
\[ d\mathbf{X}_t= \mathbbm{1}_{\Omega}(\mathbf{X}_t) dB_t - \frac{1}{2} \frac{1}{\beta(\mathbf{X}_t)}~ \mathbbm{1}_{\Gamma}(\mathbf{X}_t) n(\mathbf{X}_t) dt. \]
According to the theory presented in \cite{CF11} (see also \cite{GV14b}) the Dirichlet form corresponding to the time changed process is given by the closure of
\[ \frac{1}{2} \int_{\overline{\Omega}} (\nabla f,\nabla g) d\lambda=\frac{1}{2} \int_{\overline{\Omega}} (\nabla f,\nabla g)~ \mathbbm{1}_{\Omega} ( d\lambda + \beta d\sigma) \quad \text{for } f,g \in C^1(\overline{\Omega}) \ \text{ on } L^2(\overline{\Omega};\lambda + \beta \sigma). \] 
If we choose e.g. $\alpha \in C^1(\overline{\Omega})$ positive, a drift transformation by $\frac{1}{2} \nabla \ln \alpha$ yields a solution to
\[ d\mathbf{X}_t= \mathbbm{1}_{\Omega}(\mathbf{X}_t) dB_t + \frac{1}{2} \mathbbm{1}_{\Omega}(\mathbf{X}_t) \nabla \ln \alpha(\mathbf{X}_t) dt - \frac{1}{2} \frac{\alpha(\mathbf{X}_t}{\beta(\mathbf{X}_t)}~ \mathbbm{1}_{\Gamma}(\mathbf{X}_t) n(\mathbf{X}_t) dt, \]
which is associated to the closure of 
\[ \frac{1}{2} \int_{\overline{\Omega}} (\nabla f,\nabla g) \alpha d\lambda=\frac{1}{2} \int_{\overline{\Omega}} (\nabla f,\nabla g)~ \mathbbm{1}_{\Omega} ( \alpha d\lambda + \beta d\sigma) \quad \text{for } f,g \in C^1(\overline{\Omega}) \ \text{ on } L^2(\overline{\Omega};\alpha \lambda + \beta \sigma). \]
Actually, the order of time change and Girsanov transformation does not matter in this case. This construction yields a single particle diffusing in $\overline{\Omega}$ with sticky boundary behavior. Consequently, the idea in order to construct an interacting particle system with sticky boundary is to consider $N$ independent particles in $\Omega$ which are connected to the tensor product of the forms $(\mathcal{E}^i,D(\mathcal{E}^i))$, $i=1,\dots, N$, as presented above. Afterwards, a drift is introduced by the density $\phi$ which finally leads to the form considered in the present paper. \\
An evident idea would also be to construct an interacting particle system with instantaneous reflection and to realize afterwards a time change. However, this is not possible in a simple way. The canonical Dirichlet form is given by the closure of 
\begin{align} \label{reflform} \frac{1}{2} \int_{\Lambda} (\nabla f, \nabla g) \varrho d\lambda^N \quad \text{for } f,g \in C^1(\Lambda) \ \text{on } L^2(\Lambda;\varrho \lambda^N),
\end{align}
where $\lambda^N$ denotes the Lebesgue measure on $\Lambda$. For this kind of Dirichlet form we have a well-known regularity theory at hand which enables us to construct solutions to the underlying SDE even for singular drifts for every starting point in a specified set of admissible initial values (see e.g. \cite{FG08}, \cite{BG12} and \cite{FT95}). Usually, only starting points in the corners  of $\Lambda$ (two or more particles at the boundary of $\Omega$) are not admissible, since the boundary is not sufficiently smooth at these points. Nevertheless, such kind of dynamics do not diffuse on the boundary of $\Lambda$ and hence, a time changed process will also not have this property. Therefore, it is not possible to construct an interacting particle system with sticky reflection via time change in use of the closure of (\ref{reflform}), since a particle which reaches $\Gamma$ is expected to sojourn a positive amount of time on $\Gamma$ and meanwhile, the remaining particles keep on moving undelayed. This implies a diffusion on the boundary of $\Lambda$. An appropiate approach for a process with boundary diffusion and instantaneous reflection is given in \cite{Tom80} and \cite{Car09}. \\
In \cite{GV14b} it is shown that the transition semigroup of $\mathbf{M}^i$, $i=1,\dots,N$, given above has the strong Feller property. It seems not clear that the transition semigroup of $\mathbf{M}$ is doubly Feller (i.e., it is a Feller process with strong Feller transition semigroup). In this case, it would even be possible to deduce due to the results of \cite{Chu85} and \cite{CK08} that the process $\mathbf{M}^{\phi}$ of Theorem \ref{thmgirsanov} has the doubly Feller property.

\section{Application to particle systems with singular interactions} \label{sectappl} 

In \cite{Gra88} the author investigates a martingale problem with Wentzell boundary conditions in a very general form. In particular, the relation to SDEs is developed and an existence result is shown. As an application the author constructs a system of interacting particles in a domain with sticky boundary. This particle system gives a model for particles diffusing in a chromatography tube. More precisely, the considered domain is given by $\Theta:=\{ x \in \mathbb{R}^d|~x_1 >0 \}$ and the investigated SDE on $\overline{\Theta}$ reads as follows:
\begin{align*} 
d\mathbf{X}_t &= \sigma(\mathbf{X}_t) dN_t + b(\mathbf{X}_t) (dt - \rho(\mathbf{X}_t) dK_t) + \gamma(\mathbf{X}_t) dK_t + \tau(\mathbf{X}_t) dC_{K_t}, \\
\mathbf{X}_0&=x \in \overline{\Theta},
\end{align*}
where $(\mathbf{X}_t)_{t \geq 0}$ is a continuous, $\overline{\Theta}$-valued process, $(C_t)_{t \geq 0}$ is a $d$-dimensional standard Brownian motion, $(N_t)_{t \geq 0}$ is a $d$-dimensional continuous martingale and $(K_t)_{t \geq 0}$ is given such that $K_0=0$, $K_t$ is increasing, $dK_t=\mathbbm{1}_{\partial \Theta}(\mathbf{X}_t) dK_t$, and
\[ \langle N^i, N^j \rangle_t=\delta_{ij} \big( t- \int_0^t \rho(\mathbf{X}_s) dK_s \big). \]
Here, the main focus is placed on the very general form of the martingale problem and SDE as well as the assumptions on $\sigma$ and $a=\sigma \sigma^{T}$, which is not necessarily strictly elliptic. In former results (see e.g. \cite[Chapter IV, Section 7]{IW89}), it is assumed amongst other things that $a_{11} \geq c >0$. 
In \cite{Gra88} it is shown that the martingale problem with the sojourn condition $\rho(\mathbf{X}_t) dK_t \leq \mathbbm{1}_{\partial \Theta}(\mathbf{X}_t)dt$ has a solution if and only if the above SDE has a weak solution. Sufficient conditions are $\tau =0$, $\sigma$ and $b$ are uniformly Lipschitz continuous and bounded,  $\gamma=n$ is the inward normal vector and $\rho$ is bounded, measurable and positive. Nevertheless, the smoothness conditions on $b$ are rather strong. If we assume additionally  that $a_{11}>0$ (e.g. if $\sigma$ is given by the identity matrix), it holds that
\[ \rho(\mathbf{X}_t) dK_t= \mathbbm{1}_{\partial \Theta}(\mathbf{X}_t)dt. \]
In the case of the identity matrix, the underlying SDE is given by
\begin{align*} 
d\mathbf{X}_t &= \mathbbm{1}_{\Theta}(\mathbf{X}_t) dB_t + b(\mathbf{X}_t) \mathbbm{1}_{\Theta}(\mathbf{X}_t )dt + \frac{1}{\rho(\mathbf{X}_t)}~n(\mathbf{X}_t) dt, \\
\mathbf{X}_0&=x \in \overline{\Theta},
\end{align*}
where $(B_t)_{t \geq 0}$ is a $d$-dimensional standrad Brownian motion. This setting corresponds to the one considered in \cite{GV14b} for $\delta=0$. The corresponding system of interacting particles is given by
\begin{align*} 
d\mathbf{X}^i_t &= \mathbbm{1}_{\Theta}(\mathbf{X}^i_t) dB^i_t + b^i(\mathbf{X}_t) \mathbbm{1}_{\Theta}(\mathbf{X}^i_t )dt + \frac{1}{\rho^i(\mathbf{X}_t)}~n(\mathbf{X}^i_t) dt, \quad i=1,\dots,N, \\
\mathbf{X}_0&=x \in \overline{\Theta}^N,
\end{align*}
where $\mathbf{X}_t=(\mathbf{X}_t^1,\dots,\mathbf{X}_t^N)$. According to \cite{Gra88} an application for this system of SDEs is a model for molecules diffusing in a chromatography tube. The particles are pushed by a flow of gas and are absorbed and released by a liquid state deposited on the boundary of the tube. Hence, it is resonable to suppose a sticky boundary behavior. However, it is physically unsreasonable that two molecules are located at the same position in $\overline{\Theta}$ at the same time. In order to avoid this kind of behavior it is necessary to consider a singular drift $b^i$, $i=1,\dots,N$, which causes a strong repulsion if two particles get close to each other. The construction of such kind of stochastic dynamics via Dirichlet forms has already been realized for absorbing and reflecting boundary conditions.\\

In analogy to \cite[Section 5]{FG08}, a continuous pair potential (without hard core) is a continuous function $\zeta: \mathbb{R}^d \rightarrow \mathbb{R} \cup \{ \infty \}$ such that $\zeta(-x)=\zeta(x) \in \mathbb{R}$ for all $x \in \mathbb{R}^d \backslash \{ 0 \}$. $\zeta$ is said to be repulsive if there exists a continuous decreasing function $\eta:(0,\infty) \rightarrow [0,\infty)$ with $\lim_{t \rightarrow 0} \eta(t)=\infty$ and $R>0$ such that 
\[ \zeta(x) \geq \eta(|x|) \quad \text{for } |x| \leq R. \]
In particular, $\zeta(0)=\infty$.
For $N \in \mathbb{N}$ and a repulsive continuous pair potential $\zeta$ we consider the the function
\[ \phi(x):= \exp( - \sum_{1\leq i,j \leq N \atop i\ne j} \zeta(x^i-x^j)) \quad \text{for } x=(x^1,\dots,x^N) \in \Lambda=\overline{\Omega}^N. \] 
Note that $\phi(x)=0$ if there exist $i,j \in \{1,\dots,N\}$ such that $x^i=x^j$.\\
Let $\Gamma$ be $C^2$-smooth. We assume that $\zeta$ is a repulsive, continuous pair potential such that $\phi \in C^1(\Lambda)$ and moreover, we assume that
\[ \nabla \ln \phi \in L^2(\Lambda;\mu) \quad \text{with } \mu=\phi ~ \prod_{i=1}^N (\alpha_i \lambda_i + \beta_i \sigma_i), \]
where $\alpha_i$ and $\beta_i$ are continuous and a.e. positive such that $\sqrt{\alpha_i} \in H^{1,2}(\Omega)$ and $\sqrt{\beta_i} \in H^{1,2}(\Gamma)$ for $i=1,\dots,N$. Then Condition \ref{conddiff} is fulfilled and Theorem \ref{thmsolSDE} can be applied, i.e., there exists a solution to the SDE
\begin{align*}
d\mathbf{X}^i_t =& \mathbbm{1}_{\Omega} (\mathbf{X}^i_t) \Big( dB^i_t + \frac{1}{2} \big( \frac{\nabla_i \alpha_i}{\alpha_i} (\mathbf{X}^i_t) - \sum_{j \neq i} \nabla_i \zeta(\mathbf{X}_t^i -\mathbf{X}_t^j) \big) dt \Big) - \mathbbm{1}_{\Gamma}(\mathbf{X}^i_t) \frac{\alpha_i}{\beta_i}(\mathbf{X}^i_t) ~n(\mathbf{X}^i_t) dt \\
 + & \delta~\mathbbm{1}_{\Gamma}(\mathbf{X}^i_t) \Big( dB_t^{\Gamma,i} + \big( \frac{\nabla_{\Gamma,i} \beta_i}{\beta_i} (\mathbf{X}_t^i)- \sum_{j \neq i} \nabla_{\Gamma,i} \zeta(\mathbf{X}_t^i -\mathbf{X}_t^j) \big) dt \Big), \quad i=1,\dots,N  \\
dB_t^{\Gamma,i}&=P(\mathbf{X}_t^i) \circ dB_t^i  \\ 
\mathbf{X}_0 =& x, 
\end{align*}
for quasi every starting point $x \in \Lambda$.

\begin{example} A possible example is given by the Lennard-Jones potential
\[ \zeta(x)= 4 \varepsilon~ \big( (\frac{c}{|x|})^{12} - (\frac{c}{|x|})^6 \big), \]
where $\varepsilon$ and $c$ are positive constants.
It holds 
\begin{align*}
\nabla_i \ln \phi(x)&=- \sum_{j \neq i} \nabla_i \zeta(x^i -x^j) \\
&= \frac{24\varepsilon}{c^2}~\sum_{j \neq i} \Big(2~\big(\frac{c}{|x^i-x^j|}\big)^{14} - \big(\frac{c}{|x^i-x^j|}\big)^8 \Big) \big(x^i -x^j \big).
\end{align*}
With $f(r):=\frac{24\varepsilon}{c^2} \Big(2~\big(\frac{c}{r}\big)^{14} - \big(\frac{c}{r}\big)^8 \Big)$ we get
\[ \nabla_i \ln \phi(x)=\sum_{j \neq i} f(|x^i-x^j|) \big(x^i -x^j). \]
Thus, the absolute value of the acting force obviously depends only on the distance of the respective particles. In this case, for $\delta=0$ the corresponding system of SDEs is given by
\begin{align*}
d\mathbf{X}^i_t =& \mathbbm{1}_{\Omega} (\mathbf{X}^i_t) \Big( dB^i_t + \frac{1}{2} \frac{\nabla_i \alpha_i}{\alpha_i} (\mathbf{X}^i_t)~ dt+ \frac{1}{2} \sum_{j \neq i} f(|\mathbf{X}_t^i-\mathbf{X}_t^j|) \big(\mathbf{X}_t^i -\mathbf{X}_t^j)~dt \Big)\\
&- \mathbbm{1}_{\Gamma}(\mathbf{X}^i_t) \frac{\alpha_i}{\beta_i}(\mathbf{X}^i_t) ~n(\mathbf{X}^i_t)~ dt, \quad i=1,\dots,N,   \\ 
\mathbf{X}_0 =& x, 
\end{align*}
where $(B_t)_{t \geq 0}$, $B_t=(B_t^1,\dots,B_t^N)$, is an $Nd$-dimensional standard Brownian motion. It is natural that we obtain in this case only a solution for quasi every starting point, since points in $\Lambda$ which describe configurations where two or more particles are at the same position in $\overline{\Omega}$ are naturally not admissible in view of the singularity of $\zeta$ in $0$. An appropriate regularity results regarding the elliptic PDE associated to the form $(\mathcal{E},D(\mathcal{E}))$ would allow to apply the results of \cite{BGS13}. In this case, a process on $\Lambda \backslash \{ \phi=0\}=\{ x=(x^1,\dots,x^N) \in \Lambda|~ x^i \neq x^j \text{ for every } i \neq j \}$ can be constructed which is a solution to the above SDE for every starting point in $\Lambda \backslash \{ \phi=0\}$.
\end{example}

\bibliographystyle{alpha}
\bibliography{biblio}

\end{document}